\definecolor{darkgreen}{rgb}{0,0.5,0}
\definecolor{darkblue}{rgb}{0,0,0.7}
\definecolor{darkred}{rgb}{0.9,0.1,0.1}
\newtheorem{theorem}{Theorem}
\newtheorem{proposition}[theorem]{Proposition}
\newtheorem{lemma}[theorem]{Lemma}
\newtheorem{corollary}[theorem]{Corollary}
\theoremstyle{definition}
\newtheorem{remark}[theorem]{Remark}
\newcommand{\cref}[1]{Corollary~\ref{c.#1}}
\numberwithin{equation}{section}
\numberwithin{theorem}{section}
\newcommand{\Z}{\mathbb{Z}}
\newcommand{\N}{\mathbb{N}}
\newcommand{\R}{\mathbb{R}}
\newcommand{\A}{\mathcal{A}}
\newcommand{\Ahom}{\overline{\mathcal{A}}}
\newcommand{\innerply}[3]{\left\langle #1,#2 \right\rangle_{#3}}
\newcommand{\E}{\mathbb{E}}
\renewcommand{\P}{\mathbb{P}}
\newcommand{\F}{\mathcal{F}}
\newcommand{\Zd}{\mathbb{Z}^d}
\newcommand{\Rd}{\mathbb{R}^d}
\newcommand{\ep}{\varepsilon}
\newcommand{\eps}{\varepsilon}
\newcommand{\J}{\mathcal{J}}
\renewcommand{\a}{\mathbf{a}}
\newcommand{\ahom}{{\overbracket[1pt][-1pt]{\a}}}  
\renewcommand{\subset}{\subseteq}
\newcommand{\cu}{{\scaleobj{1.2}{\square}}}
\renewcommand{\fint}{\strokedint}
\newcommand{\Ll}{\left}
\newcommand{\Rr}{\right}
\DeclareMathOperator{\dist}{dist}
\DeclareMathOperator{\var}{var}
\DeclareMathOperator{\cov}{cov}
\newcommand{\X}{\mathcal{X}}
\renewcommand{\tilde}{\widetilde}
\newcommand{\indc}{\mathds{1}}
\newcommand{\mcl}{\mathcal}
\newcommand{\tmu}{\widetilde{\mu}}
\newcommand{\tnu}{\widetilde{\nu}}
\newcommand{\al}{\alpha}
\newcommand{\CC}{\mathbf{C}}
\newcommand{\nH}{{\underline H}}
\newcommand{\nW}{{\underline W}}
\begin{document}

\title[Mesoscopic regularity in elliptic homogenization]{Mesoscopic higher regularity and subadditivity in elliptic homogenization}

\begin{abstract}
We introduce a new method for obtaining quantitative results in stochastic homogenization for linear elliptic equations in divergence form. Unlike previous works on the topic, our method does not use concentration inequalities (such as Poincar\'e or logarithmic Sobolev inequalities in the probability space) and relies instead on a higher ($C^{k}$, $k \geq 1$) regularity theory for solutions of the heterogeneous equation, which is valid on length scales larger than a certain specified mesoscopic scale. This regularity theory, which is of independent interest, allows us to, in effect, localize the dependence of the solutions on the coefficients and thereby accelerate the rate of convergence of the expected energy of the cell problem by a bootstrap argument. The fluctuations of the energy are then tightly controlled using subadditivity. The convergence of the energy gives control of the scaling of the spatial averages of gradients and fluxes (that is, it quantifies the weak convergence of these quantities) which yields, by a new ``multiscale" Poincar\'e inequality, quantitative estimates on the sublinearity of the corrector. 
\end{abstract}

\author[S. Armstrong]{Scott Armstrong}
\address[S. Armstrong]{Ceremade (UMR CNRS 7534), Universit\'e Paris-Dauphine, Paris, France}
\email{armstrong@ceremade.dauphine.fr}

\author[T. Kuusi]{Tuomo Kuusi}
\address[T. Kuusi]{Department of Mathematics and Systems Analysis, Aalto University, Finland}
 \email{tuomo.kuusi@aalto.fi}

\author[J.-C. Mourrat]{Jean-Christophe Mourrat}
\address[J.-C. Mourrat]{Ecole normale sup\'erieure de Lyon, CNRS, Lyon, France}
\email{jean-christophe.mourrat@ens-lyon.fr}

\keywords{stochastic homogenization, higher regularity, subadditivity, sublinear corrector estimate}
\subjclass[2010]{35B27, 35B45}
\date{\today}

\maketitle

\section{Introduction}
In this paper, we introduce a new method for obtaining quantitative results in elliptic homogenization. It is based on a new regularity theory for higher derivatives of solutions which is valid on \emph{mesoscopic} scales. The regularity estimates essentially localize the dependence of the solutions on the coefficients, allowing for the mixing assumptions to dictate the rate of homogenization. This idea is formalized by a novel bootstrap argument which uses the regularity estimates to accelerate the convergence of the natural subadditive and superadditive quantities associated to the variational formulation of the equation. As an application, we give explicit estimates on the sublinearity of the correctors and the weak convergence of their rescaled gradients.

\subsection{Motivation and informal summary of results}
We consider the linear elliptic equation
\begin{equation}
\label{e.pde}
-\nabla \cdot \left( \a(x) \nabla u \right) = 0
\end{equation}
in bounded open subsets of $\Rd$. The coefficient $\a(\cdot)$ is a random field valued in the set of~$d$-by-$d$ symmetric matrices with eigenvalues belonging to the interval~$\left[ 1,\Lambda\right]$ for a fixed ellipticity constant~$\Lambda \geq 1$. The law of $\a(\cdot)$ is given by a probability measure~$\P$ which is assumed to satisfy a quantitative mixing condition.  

\smallskip

We are interested in obtaining quantitative information on the statistical properties of solutions of~\eqref{e.pde} on very large scales, a concern which lies within the realm of homogenization. The qualitative theory of homogenization for such equations was developed by~\cite{PV1,K1,Y1,JKO} and the first quantitative results are due Yurinskii~\cite{Y101}. Developing a quantitative theory of stochastic homogenization for linear elliptic equations has received a lot of attention since the groundbreaking work of Gloria and Otto~\cite{GO1,GO2,GO3} and Gloria, Neukamm and Otto~\cite{GNO2,GNO}, who proved an array of optimal estimates under certain very strong mixing assumptions on the coefficients (in particular, that~$\P$ satisfies some form of a \emph{spectral gap inequality}). At the core of their results are the use of sensitivity estimates and the spectral gap assumption to derive moment bounds on the gradient of the corrector and the Green's functions, which culminated in the work of Marahrens and Otto~\cite{MO}. These gradient estimates, following Naddaf and Spencer~\cite{NS}, give control of the sensitivity of the correctors themselves to changes in the coefficient field, and thereby yield optimal estimates after another application of the spectral gap inequality. 

\smallskip

A regularity theory for stochastic homogenization was recently introduced by Armstrong and Smart~\cite{AS}. In particular,~\cite[Theorem 1.2]{AS} provides a gradient bound, of exactly the sort required for the quantitative theory of homogenization, which applies to arbitrary solutions of~\eqref{e.pde} and with much stronger (and essentially optimal) stochastic integrability. The techniques of~\cite{AS} were further developed by Armstrong and Mourrat~\cite{AM}, for general (nonlinear) divergence-form equations and systems, and, in a similar spirit by Gloria, Neukamm and Otto~\cite{GNO3} for linear equations and systems. These works were inspired by the celebrated papers of Avellaneda and Lin~\cite{AL1,AL2}, who proved in particular uniform Lipschitz estimates for equations with periodic coefficients. As in~\cite{AL1,AL2}, the philosophy is to show that solutions of the heterogeneous equation~\eqref{e.pde} inherit the regularity of the limiting constant-coefficient equation due to homogenization. The method of~\cite{AS} is however different from these previous works, which were based on compactness arguments and used strong bounds on the correctors. Instead, the idea is to mimic more closely the proof of the classical Schauder estimates, using quantitative methods to replace compactness: one shows that, at each scale, a solution of~\eqref{e.pde} may be approximated by the solution of the homogenized equation and then iterates the resulting estimate over dyadic scales. If the error in the approximation is small enough (an algebraic or Dini rate of homogenization is needed), then the argument yields a uniform Lipschitz estimate. This argument is more robust than the compactness argument of~\cite{AL1,AL2}, since it separates the approximation step from the iteration step. Indeed, in addition to its applicability in the stochastic setting, it has yielded new results even in the periodic and almost periodic settings (cf.~\cite{ASh,Sh}). 

\smallskip

Another idea from~\cite{AS,AM} is that subadditive arguments are the way to prove quantitative homogenization results that are optimal in stochastic integrability (under essentially any mixing assumption). This leads to a regularity theory which is optimal in terms stochastic integrability. Subadditive arguments are so effective for this purpose because they reduce questions of stochastic integrability for random variables with complicated dependence on the random environment (such as solutions of the PDE) to estimates of a finite sum of bounded random variables which satisfy the same mixing properties as the coefficients. Under assumptions on the coefficient field which provide a spectral gap inequality, the regularity theory (specifically the Lipschitz estimate) is strong enough to quickly recover the optimal quantitative estimates on the corrector proved by Gloria and Otto. Thus an important consequence of these new regularity estimates on the quantitative theory was to separate the gradient estimates, which can now be proved under very general mixing assumptions, from the rest of the quantitative theory, which until now requires spectral gap-type assumptions. 

\smallskip

In the present paper, we propose to take this program one step further by deriving quantitative bounds for the correctors (with explicit exponents) without using the spectral gap or other Poincar\'e-type concentration inequalities. This is the first step in obtaining a quantitative theory of stochastic homogenization applicable to general coefficient fields. Instead of relying on concentration of measure, we show by a bootstrap argument that the regularity theory itself can be used to improve the rate of homogenization. In addition to providing a new point of view in the quantitative theory of stochastic homogenization and allowing for more general mixing assumptions, our arguments yield estimates which are much stronger in stochastic integrability (under strong mixing assumptions, we get exponential moments rather than just $p$th moments). This is because, as in \cite{AS,AM}, our method allows us to use subadditivity to control the stochastic fluctuations. So far, the estimates we can obtain (see Theorem~\ref{t.correctors}, below) are unfortunately suboptimal in their scalings compared to what can be proved under spectral gap assumptions. This is due to the presence of boundary layers encountered in the analysis which eventually force the bootstrap to halt before desired. We hope to address this issue in the near future. 

\smallskip

In addition to the uniform Lipschitz estimate, our approach requires higher $C^{k,1}$ estimates for $k\ge 1$, which were left essentially implicit in~\cite{AS,AM}. 
Of course, unlike the Lipschitz estimate, such higher derivative estimates for $k\geq 1$ cannot hold uniformly on the unit scale, but they are valid on \emph{mesoscopic} scales: the assertion, which is stated precisely in Theorem~\ref{t.mesoregularity} below, is roughly that a solution of~\eqref{e.pde} on the ball $B_R$, with $R\gg1$, can be well-appoximated by a $k$th degree polynomial on any mesoscopic ball $B_r(x) \subseteq B_{R/2}$ with radius $r\geq R^{1-\ep}$, for a specified  exponent $\ep >0$ which depends in particular on $k$. By ``well-approximated", we mean that the quality of the approximation of the solution by the polynomial is, up to a constant, as good as one would have for a harmonic function with the same oscillation. This result is presented below in Theorem~\ref{t.mesoregularity}. Fischer and Otto~\cite{FO} have very recently developed a higher regularity theory along somewhat different lines (see also Remark~\ref{r.FO}).

\smallskip

While we state and prove our results for scalar equations with symmetric coefficients under the assumption of finite range of dependence, we emphasize that these choices are not imposed on us by any limitation of our method. Indeed, the arguments work verbatim for systems with just changes to the notation, the techniques of~\cite{AM} can be used to get rid of the symmetry assumption, and it is precisely under weaker mixing conditions that our estimates would be optimal, since the weak mixing stops the bootstrap before it sees boundary layers. Our choice to make these additional assumptions rather reflects a desire to maintain the readability of the paper by presenting the ideas in their simplest setting.

\subsection{Assumptions}
We work in the Euclidean space $\Rd$ in dimension $d \geq 2$ and with a fixed ellipticity constant $\Lambda \geq 1$. We consider the space of coefficient fields~$\a(\cdot)$ valued in the symmetric $d$-by-$d$ matrices satisfying, for all $\xi\in\Rd$, 
\begin{equation}
\label{e.ue}
\left| \xi\right|^2 \leq \xi \cdot \a(x) \xi \leq \Lambda \left| \xi\right|^2. 
\end{equation}
We define $\Omega$ to be the set of all such coefficient fields:
\begin{equation*}
\Omega:= \left\{ \a(\cdot)\,:\, \a:\Rd \to \R^{d\times d} \ \mbox{is Lebesgue measurable, satisfies~\eqref{e.ue} and $\a^t=\a$} \right\}.
\end{equation*}
We endow $\Omega$ with the translation group $\{T_y\}_{y\in\Rd}$, which acts on $\Omega$ via
\begin{equation*}
(T_y\a)(x) := \a(x+y),
\end{equation*}
and the family $\{ \F(U)\}$ of $\sigma$--algebras on $\Omega$, with $\F(U)$ defined for each Borel subset $U\subseteq \Rd$ by
\begin{multline*} \label{}
\F(U) := \mbox{$\sigma$--algebra on $\Omega$ generated by the family of maps} \\
\a \mapsto \int_{U} q\cdot \a(x)p \, \varphi(x) \,dx, \quad p,q \in\Rd, \ \varphi\in C^\infty_c(\Rd).
\end{multline*}
Roughly,~$\F(U)$ contains the information about the coefficients restricted to~$U$. We denote the largest of these~$\sigma$--algebras by~$\F:=\F({\Rd})$. The translation group may be naturally extended to~$\F$ itself by defining
\begin{equation*}
T_yA:= \left\{ T_y\a\,:\, \a\in A \right\}, \quad A\in\F
\end{equation*}
and to any random element $X$ by setting $(T_zX)(\a):= X(T_z\a)$. 

\smallskip

Throughout the paper, we consider a probability measure~$\P$ on $(\Omega,\F)$ which is assumed to satisfy the following two conditions:
\begin{enumerate}

\item[(P1)] $\P$ is stationary with respect to $\Zd$--translations: for every $z\in \Zd$ and $A\in \F$,
\begin{equation*} 
\P \left[ A \right] = \P \left[ T_z A \right].
\end{equation*}

\smallskip

\item[(P2)] $\P$ has a unit range of dependence: for every pair of Borel subsets $U, V\subseteq \Rd$ with $\dist(U,V) \geq 1$, 
\begin{equation*}
\mbox{$\F(U)$ \  and \ $\F(V)$ \ are \ $\P$--independent.}
\end{equation*}
\end{enumerate}

The expectation of an $\F$-measurable random variable $X$ with respect to $\P$ is denoted by~$\E\left[X \right]$.

\subsection{Notation}
We continue with some notation used throughout the paper. For a measurable set $E\subseteq \Rd$, we denote the Lebesgue measure of $E$ by $|E|$ unless $E$ is a finite set, in which case $|E|$ denotes the cardinality of $E$. For a bounded Lipschitz domain $U \subset \Rd$ with $|U| < \infty$ and $p\in [1,\infty)$, we denote the normalized $L^p(U)$ norm of a function $f\in L^p(U)$ by 
\begin{equation} 
\label{e.def.nL}
\| f \|_{\underline{L}^p(U)} := \left( \fint_U \left| f(x )\right|^p\,dx\right)^{\frac1p}.
\end{equation}
It is also convenient to denote $\| f \|_{\underline{L}^\infty(U)} := \| f \|_{L^\infty(U)}$. For a vector-valued $F\in L^p(U;\Rd)$, we write $\| F \|_{\underline{L}^p(U)}:= \| | F|  \|_{\underline{L}^p(U)}$. The average of a function $f\in L^1(U)$ on $U$ is denoted by
\begin{equation*} \label{}
\left( f \right)_U:= \fint_U f(x) \,dx. 
\end{equation*}
For $U\subseteq\Rd$ with $|U|<\infty$ and vector fields $F,G\in L^2(U;\Rd)$, we denote 
\begin{equation}
\label{e.L2ip}
\innerply{F}{G}{U}:= \fint_{U} F(x) \cdot G(x)\,dx.
\end{equation}
Since we also wish to quantify the convergence of various functions in the weak~$L^p$ topology, it is natural to work with $W^{-1,p}$ norms. For this purpose we introduce the normalized $W^{-1,p}$ norm of $F \in L^p(U; \Rd)$ by
\begin{equation*}
\|F\|_{\nW^{-1,p}(U)} := \sup\left\{ \left| \innerply{F}{\eta}{U}\right| \,:\, \eta \in W^{1,p'}(U;\Rd),\, \left( \eta \right)_U = 0,\,  \left\| \nabla \eta(x) \right\|_{\underline{L}^{p'}(U)} = 1
\right\}.
\end{equation*}
As usual, $p'$ denotes the H\"older conjugate of an exponent $p\in [1,\infty]$. We also use the shorthand notation
\begin{equation}
\label{e.def.H-1}
\|F\|_{\nH^{-1}(U)} := \|F\|_{\nW^{-1,2}(U)} .
\end{equation}
Note that $W^{-1,p'}$ and $H^{-1}$ are (somewhat unconventionally) used to denote the duals of $W^{1,p}/ \R$ and $H^1/ \R$, rather than $W_0^{1,p}$ and $H^1_0$. We denote cubes of side length $R>0$ by
\begin{equation*}
\cu_R=\cu_R(0):= \left( -\frac12R,\frac12R\right)^d, \qquad \cu_R(x):= x + \cu_R.
\end{equation*}
The family of cubes of side length at least one is
\begin{equation} 
\label{e.def.mclC}
\mathcal C:= \left\{ \cu_R(x)\,:\, x\in\Rd, \, R\geq 1\right\}. 
\end{equation}
The set of (real-valued) polynomials on~$\Rd$ with degree at most $k\in\N$ is denoted by $\mathcal{P}_k$. The (random) vector space of solutions of~\eqref{e.pde} in $U\subseteq \Rd$ is 
\begin{equation} \label{e.AU}
\mathcal{A}(U):= \left\{ u \in H^1_{\mathrm{loc}}(U) \, : \, \forall v\in H^1_0(U), \ \int_U \nabla v(x) \cdot \a(x) \nabla u(x) \,dx  = 0 \right\}.
\end{equation}
Recall that, for each $p\in\Rd$, the \emph{corrector} $\Phi(\cdot,p)$ is defined for $\P$--almost every~$\a\in\Omega$ as the unique function (up to a constant) in $H^1_{\mathrm{loc}}(\Rd)$ with a $\Zd$--stationary, mean-zero gradient and which satisfies the equation 
\begin{equation*} \label{}
-\nabla \cdot \left( \a(x) \left(p+\nabla \Phi(\cdot,p)\right)\right) = 0 \quad \mbox{in}  \ \Rd.
\end{equation*}
See~\cite{JKO} for details. 

\subsection{Subadditive quantities and main results}
\label{ss.subadd}
We now recall some objects from~\cite{AS,AM} which play a central role in the paper. 
Given a bounded Lipschitz domain $U\subseteq \Rd$ and $p, q \in \Rd$, we denote the linear function of slope $p$ by $\ell_p(x):=p\cdot x$  and define
\begin{equation*}
\nu(U,p):= \inf_{v\in \ell_p + H^1_0(U)} \fint_U \frac12 \nabla v(x) \cdot \a(x) \nabla v(x)\,dx,
\end{equation*}
which is the natural subadditive quantity representing the energy of the solution of the ``cell problem," introduced (in a more general form) by Dal Maso and Modica~\cite{DM1,DM2} in their proof of qualitative homogenization of convex integral functionals. We also define
\begin{equation*}
\mu(U,q) = \inf_{u\in H^1(U)} \fint_U \left( \frac12 \nabla u(x) \cdot \a(x) \nabla u(x) - q\cdot \nabla u(x) \right)\,dx,
\end{equation*}
which is the natural superadditive quantity introduced in~\cite{AS} (also in a more general form), and is dual to $\nu(U,p)$.

\smallskip

The quantity $\nu(\,\cdot\,,p)$ is \emph{subadditive} in the sense that if $U,U_1,\ldots,U_k$ are bounded domains satisfying
\begin{equation*} 
U_1,\ldots,U_k \quad \mbox{are pairwise disjoint and} \quad \left| U\setminus (U_1 \cup\cdots \cup U_k) \right| = 0, 
\end{equation*}
then
\begin{equation} 
\label{e.subadd}
\nu(U,p) \leq \sum_{i=1}^k \frac{|U_i|}{|U|} \nu(U_i,p).
\end{equation}
This is immediate from the fact that a candidate minimizer for $\nu(U,p)$ can be obtained by assembling the minimizers of the quantities $\nu(U_i,p)$, as these agree on the boundaries. (We remark that we are using the term ``subadditive"  in a nonconventional way, due to our normalization, as it would be more standard to say that $U \mapsto |U|\, \nu(U,p)$ is subadditive.) Conversely, the quantity $\mu(\,\cdot\,,q)$ is \emph{superadditive} (that is, $-\mu(\,\cdot\,,q)$ is subadditive), since the minimizer for $\mu(U,q)$ provides with a candidate minimizer for each $\mu(U_i,q)$ by restriction. 

\smallskip

Note that the minimizer for~$\mu(U,q)$ will be the solution of a Neumann problem for~\eqref{e.pde} in $U$, while of course the minimizer for $\nu(U,p)$ is the solution of a Dirichlet problem. Thus we may think of $\nu(U,p)$ as ``the subadditive quantity for gradients" and of $\mu(U,q)$ as ``the superadditive quantity for fluxes." 

\smallskip

The homogenized matrix $\ahom$ can be defined as the symmetric matrix satisfying
\begin{equation}
\label{e.ahomdef}
\frac 1 2 p \cdot \ahom p = \lim_{R \to \infty} \E[\nu(\cu_R,p)].
\end{equation}
We view the quantities $\mu$ and $\nu$ as central to the quantitative theory of homogenization. The main step in the quantitative arguments in~\cite{AS,AM} is to show roughly that there exists $\alpha(d,\Lambda)>0$ such that, for every $\cu\in \mcl C$,
\begin{equation*} \label{}
\Ll| \nu(\cu,p)  - \tfrac 1 2 p \cdot \ahom p  \Rr| \leq C|p|^2\left| \cu\right|^{-\alpha} \quad \mbox{with overwhelming probability,}
\end{equation*}
and a similar estimate for $\mu$,
\begin{equation*} \label{}
\Ll|\mu(\cu,q) +\tfrac 1 2 q \cdot \ahom^{-1} q  \Rr| \leq C|q|^2\left| \cu\right|^{-\alpha} \quad \mbox{with overwhelming probability.}
\end{equation*}
The first main result of this paper improves the rate of convergence from the unspecified and tiny $\alpha(d,\Lambda)>0$ in~\cite{AS} to any exponent $\alpha < \frac1d$. 

\begin{theorem}[Rate of convergence of subadditive quantities]
Let $\alpha < \frac1d$. There exists $C(d,\Lambda, \alpha) < \infty$ such that for every $p,q \in \R^d$, $\cu \in \mcl C$ and $\lambda \in \R$,
\begin{equation}
\label{e.conv}
\left\{
\begin{aligned}
& \log \E\Ll[\exp \Ll(\lambda|p|^{-2}|\cu|^\alpha  \Ll|\nu(\cu,p) - \tfrac 1 2 p \cdot \ahom p  \Rr| \Rr) \Rr] \le C(1+\lambda^2), \quad \mbox{and} \\
& \log \E\Ll[\exp \Ll(\lambda|q|^{-2}|\cu|^\alpha \Ll|\mu(\cu,q) + \tfrac 1 2 q \cdot \ahom^{-1} q  \Rr|\Rr) \Rr] \le C(1+\lambda^2).
\end{aligned} 
\right.
\end{equation}
\label{t.conv}
\end{theorem}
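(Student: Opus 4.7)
The plan is to decompose, for each $\cu = \cu_R \in \mcl{C}$, both
$$\nu(\cu, p) - \tfrac 1 2 p \cdot \ahom p \quad \text{and} \quad \mu(\cu, q) + \tfrac 1 2 q \cdot \ahom^{-1} q$$
into a \emph{fluctuation} part (the difference of $\nu$ or $\mu$ from its expectation) and a \emph{bias} part (the difference of the expectation from its homogenized limit). I would then glue the resulting fluctuation and bias estimates together with the aid of the nonnegative dual defect
$$J(\cu, p, q) := \nu(\cu, p) + \mu(\cu, q) - q \cdot p,$$
whose decay captures the rate of homogenization and whose nonnegativity converts one-sided bounds into two-sided ones.

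The fluctuation halves are driven by subadditivity~\eqref{e.subadd} (respectively superadditivity of $\mu$) together with the unit range of dependence (P2). For any scale $r \in [1, R]$ dividing $R$, subadditivity bounds $\nu(\cu_R, p)$ above by the average of $\nu(\cu_r(z), p)$ over a partition of $\cu_R$. Restricting to a sublattice of centers $z$ at mutual distance at least $r+1$ yields $\asymp (R/r)^d$ independent summands, each uniformly bounded by $\tfrac12\Lambda |p|^2$. A Bernstein-type bound for sums of bounded independent random variables then yields an exponential moment estimate for $(\nu(\cu_R,p) - \E[\nu(\cu_R, p)])_+$ whose variance proxy is $\asymp |p|^4(r/R)^d$; the one-sided bound for $-(\mu(\cu_R, q) - \E[\mu(\cu_R, q)])_-$ is analogous. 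Choosing $r \ge 1$ to balance against the required $|\cu_R|^\alpha$ prefactor is possible for any $\alpha < 1/d$.

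For the bias, I would prove inductively the existence of a sequence $\alpha_k \uparrow 1/d$ such that
$$\E\bigl[J(\cu_R, p, \ahom p) \bigr] \le C|p|^2 R^{-d\alpha_k}$$
for every $\cu_R \in \mcl{C}$. The base case $\alpha_0 > 0$ is provided by the methods of~\cite{AS,AM}. The inductive step uses the mesoscopic $C^{k,1}$ regularity of \tref{mesoregularity}: assuming the bound at level $k$, one may approximate the Dirichlet minimizer for $\nu(\cu_R, p)$ and the Neumann minimizer for $\mu(\cu_R, \ahom p)$ by degree-$k$ polynomials on each sub-cube $\cu_r(x) \subset \cu_{R/2}$ with $r \gtrsim R^{1-\ep}$, with the error one would expect for a harmonic function. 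Substituting these polynomial approximations into the variational formulations of $\nu$ and $\mu$ and comparing with the constant-coefficient homogenized problem yields an improved bound at level $k+1$ with $\alpha_{k+1} > \alpha_k$.

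The main obstacle lies in the calibration of this bootstrap and the resulting saturation at $1/d$. Near $\partial \cu_R$ the mesoscopic regularity is unavailable, and the associated boundary-layer contribution to the defect is, after volume normalization, of order $R^{-1} = |\cu_R|^{-1/d}$, which is precisely what caps the attainable bias exponent. Once the bias bound $\E[J(\cu_R, p, \ahom p)] \le C|p|^2 R^{-d\alpha}$ is secured for each $\alpha < 1/d$, combining it with the fluctuation bounds above at a scale $r \asymp R^{1-2\alpha}$ yields one-sided versions of both exponential moment bounds in~\eqref{e.conv}; the remaining opposite-sign directions follow from the bias estimate on $J \ge 0$ together with the one-sided fluctuation estimates already obtained.
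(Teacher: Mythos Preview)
Your overall architecture matches the paper's: a bootstrap on the expected defect $\E[J(\cu,p,\ahom p)]$ to get the bias, then a subadditivity/independence argument to upgrade to exponential moments, with the duality $J\ge 0$ converting one-sided into two-sided bounds. Two of the steps, however, have genuine gaps as written.

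\medskip

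\textbf{The bootstrap mechanism.} Your description of the inductive step---approximate the $\nu$- and $\mu$-minimizers by degree-$k$ polynomials on mesoscopic subcubes and ``compare with the constant-coefficient homogenized problem''---is not what actually drives the improvement, and I do not see how it would work on its own. The paper works with the \emph{difference} $v$ of the two minimizers (the maximizer of $J$; note that $J=\nu-\mu-p\cdot q$, not $\nu+\mu-p\cdot q$ as you wrote), uses the regularity theory to approximate $v$ by polynomials $w_z$ on mesocubes, and then builds \emph{local solutions} $v_y$ with boundary data $w_{[y]}$ on a finer mesoscale. The point of this localization is that it reduces $J(\cu_R,p,q)$ to a sum of terms $\langle \a\nabla \tilde u_y,\nabla w_{[y]}\rangle_{\cu_r(y)}$ in which the local maximizers $\tilde u_y$ are essentially \emph{independent} across $y$, while the polynomial slopes $\nabla w_{[y]}$ vary slowly. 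The improvement in the exponent then comes from a CLT-type variance gain $(s/r)^{-d/2}$ exploiting this independence (see the sketch in Section~\ref{ss.sketch} and Lemma~\ref{l.bigCLTstep}). The regularity theory is used to \emph{localize} the dependence so that independence can be used; it does not by itself improve $\alpha$.

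\medskip

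\textbf{The fluctuation step.} Your single-scale argument with bounded summands does not reach the full range $\alpha<\tfrac1d$. With variance proxy $(r/R)^d$ you need $r\lesssim R^{1-2\alpha}$, while absorbing the subadditivity bias $\E[\nu(\cu_r,p)]-\E[\nu(\cu_R,p)]\lesssim r^{-d\alpha'}$ (for some $\alpha'\in(\alpha,\tfrac1d)$) after multiplication by $R^{d\alpha}$ forces $r\gtrsim R^{\alpha/\alpha'}$. These are compatible only when $d\alpha<1-2\alpha$, i.e.\ $\alpha<\tfrac{1}{d+2}$. The paper avoids this by an \emph{inductive} scheme over triadic scales (Theorem~\ref{t.stoch-int-tilde}): at each step one passes from scale $3^{m-1}$ to $3^m$, peeling off only a thin boundary layer of subcubes of size $3^{\gamma m}$ with $\gamma$ close to $1$, and invoking the induction hypothesis (not just boundedness) on the subcubes. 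This is what lets one take any $\beta<\alpha'$ rather than being capped at $\tfrac{1}{d+2}$. Your idea can be salvaged by bootstrapping the fluctuation estimate itself across scales, but as stated the claim ``possible for any $\alpha<\tfrac1d$'' is false.
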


Testing the definition of $\mu(U,q)$ with the minimizer of $\nu(U,p)$ yields
\begin{equation} 
\label{e.defJ}
J(U,p,q) := \nu(U,p) - \mu(U,q) -p\cdot q \geq 0. 
\end{equation}
Moreover, as we will see, $J(U,p,q)$ can be expressed in a variational form, and the optimizer is precisely the difference of the minimizers of $\mu(U,q)$ and $\nu(U,p)$. In other words, the difference of the subadditive energies is an energy of the differences of the minimizers. The proof of Theorem~\ref{t.conv} is based on a bootstrap argument, using the higher regularity estimates, applied to this difference, to accelerate the convergence of $J(\cu,p,\ahom p)$ to zero. See Section~\ref{ss.sketch} for a sketch of the bootstrap argument. 

\smallskip

The estimate~\eqref{e.conv} is optimal in the sense that it is false for any $\alpha > \frac1d$. To see this, we note that minimizers for $\mu$ satisfy (oscillating) Neumann conditions while those of $\nu$ satisfy Dirichlet conditions. Therefore, the gradient of the difference of the minimizers of $\mu(\cu,\ahom p)$ and $\nu(\cu,p)$ will in general be $O(1)$ in a boundary layer of at least unit thickness. The proportion of volume of such a boundary layer relative to the whole cube $\cu$ is of order $|\cu|^{-1/d}$. Therefore $J(\cu,p,\ahom p)$ should be at least $c|\cu|^{-1/d}$ in general.

\smallskip

It is the presence of this boundary layer that so far limits our quantitative results to consequences of~\eqref{e.conv}. On the other hand, one  expects the minimizers of $\nu(U,p)$ and $\mu(U,\ahom p)$ to be much closer in the interior of $U$ than near the boundary, and so if the boundary layer could be neglected, there is hope to prove much more precise results. 

\smallskip

Theorem~\ref{t.conv} can be used to prove an array of quantitative estimates in homogenization with exponents which, under the strongest mixing assumptions, will typically differ from the optimal one by a square root. For example, one can show that the $L^2$ error in homogenization is $O(\ep^{1/2-})$, where the microscopic length scale is $\ep$, or that the $H^1$ norm of the two-scale expansion is $O(\ep^{1/4-})$. The application we present here is an estimate of the sublinearity of the corrector and of the $H^{-1}$ norm of its gradient (recall that the $H^{-1}$ norm measures weak convergence in $L^2$). It roughly states that 
\begin{equation*} \label{}
 \left\| \Phi(\cdot,p) -\left( \Phi(\cdot,p) \right)_{B_R} \right\|_{\underline{L}^2(B_R)} + \|\nabla \Phi(\cdot ,p)\|_{\nH^{-1}(B_R)}
\lesssim R^{\frac12+},
\end{equation*}
with very strong stochastic integrability. Note that estimates on the sublinearity of the corrector are intimately connected to estimates for the error in homogenization.

\begin{theorem}[Sublinear growth of the corrector]
\label{t.correctors}
Let $\beta \in\left(0,\frac12\right)$. There exists a constant $C(d,\Lambda,\beta) < \infty$ such that, for every $p\in\Rd$, $R \ge 1$ and $\lambda \in \R$, 
\begin{multline*} \label{}
\log \E\Ll[\exp \Ll(\lambda |p|^{-2} R^{-2+2\beta} \left( \left\| \Phi(\cdot,p) -\left( \Phi(\cdot,p) \right)_{B_R} \right\|_{\underline{L}^2(B_R)}^2 +  \|\nabla \Phi(\cdot ,p)\|_{\nH^{-1}(B_R)}^2\right)   \Rr) \Rr] \\
 \leq C(1+\lambda^2).
\end{multline*}
\end{theorem}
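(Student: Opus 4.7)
The overall strategy is to translate Theorem~\ref{t.conv} into quantitative control on the weak convergence of $\nabla \Phi(\cdot,p)$ and its flux $\a(\cdot)(p+\nabla \Phi(\cdot,p))$, and then to upgrade this into quantitative control on the sublinearity of $\Phi(\cdot,p)$ itself via a multiscale Poincar\'e inequality. Three steps are needed; the main obstacle lies in the first.

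\textbf{Step 1: from $J$ to spatial averages.} For each $\cu \in \mcl C$, let $v_\cu \in \ell_p + H^1_0(\cu)$ be the minimizer of $\nu(\cu,p)$ and $w_\cu \in H^1(\cu)$ be the minimizer of $\mu(\cu,\ahom p)$. From their Euler--Lagrange equations and boundary conditions one has $(\nabla v_\cu)_\cu = p$ and $(\a \nabla w_\cu)_\cu = \ahom p$, together with the variational identity
\begin{equation*}
J(\cu,p,\ahom p) = \tfrac 1 2 \fint_\cu (\nabla v_\cu - \nabla w_\cu)\cdot \a (\nabla v_\cu - \nabla w_\cu)\,dx.
\end{equation*}
The goal is to prove, for every $\cu \in \mcl C$,
\begin{equation*}
\bigl|(\nabla\Phi(\cdot,p))_\cu\bigr|^2 + \bigl|(\a(p+\nabla\Phi(\cdot,p)))_\cu - \ahom p\bigr|^2 \lesssim J(\cu,p,\ahom p),
\end{equation*}
up to lower-order stationarity corrections. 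The plan is to compare $\ell_p + \Phi(\cdot,p)$ with $v_\cu$ (for the gradient average) and with $w_\cu$ (for the flux average), using the mesoscopic $C^{k,1}$ regularity of Theorem~\ref{t.mesoregularity} to ensure that these comparisons are tight away from a thin boundary layer of $\cu$ whose contribution is itself reabsorbed into $J$.

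\textbf{Steps 2 and 3: multiscale Poincar\'e and summation.} A new multiscale Poincar\'e-type inequality, valid for any $u \in H^1(B_R)$, dominates $\|u - (u)_{B_R}\|_{\underline{L}^2(B_R)}^2 + \|\nabla u\|_{\nH^{-1}(B_R)}^2$ by a weighted dyadic sum over scales $2^k \le R$ of the $\ell^2$-mean of the cube averages $|(\nabla u)_{\cu_{2^k}(z)}|^2$, with $z$ ranging over a grid in $B_R$. Applying it to $u = \Phi(\cdot,p) - (\Phi(\cdot,p))_{B_R}$ reduces the theorem, via Step 1, to controlling a weighted sum of $J(\cu_{2^k}(z), p, \ahom p)$. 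Each term has exponential moments of order $2^{-2k\alpha}|p|^2$ for any $\alpha < 1/d$ by Theorem~\ref{t.conv}, while the unit range of dependence (P2) makes cubes at scale $2^k$ separated by at least one lattice unit independent, so the $\ell^2$-mean over the $\sim (R/2^k)^d$ cubes at that scale enjoys sub-Gaussian variance reduction. Balancing this reduction against the dyadic weights and summing the resulting geometric series produces the claimed rate $R^{2-2\beta}|p|^2$ for every $\beta < 1/2$, with Gaussian-type concentration preserved term by term since the construction only composes sub-Gaussian tail bounds.

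\textbf{Main obstacle.} The hardest step is the first: since the corrector depends on the coefficient field on all of $\Rd$, its spatial average on $\cu$ is not $\F(\cu)$-measurable and cannot be read off from $v_\cu, w_\cu$ by pure algebra. Bridging this gap requires both the energy identity for $J$ and the mesoscopic higher regularity of Theorem~\ref{t.mesoregularity}, which forces $\Phi(\cdot,p)$ to agree with $v_\cu$ inside $\cu$ up to a thin boundary layer. This boundary layer is precisely the obstruction identified just after Theorem~\ref{t.conv}, and it is what caps the exponent at $\beta = 1/2$ in the present method.
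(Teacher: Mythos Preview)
Your overall architecture --- multiscale Poincar\'e inequality plus the exponential-moment control of Theorem~\ref{t.conv} --- matches the paper. But Step~1 as you state it cannot work, and the paper takes a different route to bridge the gap you correctly identify.

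The pointwise bound
\[
\bigl|(\nabla\Phi(\cdot,p))_\cu\bigr|^2 \lesssim J(\cu,p,\ahom p)
\]
is false: the left side depends on the environment on all of $\Rd$, while the right side is $\F(\cu)$-measurable. No amount of boundary-layer trimming can fix this, because the corrector's average on $\cu$ genuinely feels the coefficients far away. Your proposed use of Theorem~\ref{t.mesoregularity} to force $\Phi$ to agree with $v_\cu$ inside $\cu$ is not how that theorem works --- it approximates solutions by $\ahom$-harmonic polynomials, not by local Dirichlet minimizers --- and in any case the resulting error would not be controlled by $J(\cu,p,\ahom p)$ alone.

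What the paper does instead is work entirely with the finite-volume minimizers $v(\cdot,\cu_m,p)$ of $\nu(\cu_m,p)$. The spatial averages of $\nabla v(\cdot,\cu_m,p)$ over subcubes $z+\cu_n$ are bounded, via a simple energy comparison (Lemma~\ref{l.betweenscalesL2}), by the difference
\[
-\nu(\cu_m,p) + |\mcl Z_n|^{-1}\sum_{z\in\mcl Z_n}\nu(z+\cu_n,p),
\]
which is nonnegative by subadditivity and small by Theorem~\ref{t.conv}. The corrector enters only at the very end: the Lipschitz estimate (the $k=0$ case of Theorem~\ref{t.mesoregularity}) shows that $\nabla v(\cdot,\cu_m,p)\to\nabla\Phi(\cdot,p)$ in $L^2_{\mathrm{loc}}$ as $m\to\infty$, with the tail of this Cauchy sequence again controlled by $\nu$-differences (Lemma~\ref{l.betweenscalesLinfty}). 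Neither $J$ nor the flux average appears.

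Finally, your Step~3 variance-reduction via independence is unnecessary and, as written, circular: it presupposes that Step~1 has produced $\F(\cu)$-measurable bounds. In the paper, Theorem~\ref{t.conv} already delivers the rate $|\cu|^{-\alpha}$ for any $\alpha<\tfrac1d$ with exponential moments, and the final summation is a straightforward application of Jensen's inequality.
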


Theorem~\ref{t.correctors} is new, even though it is almost certainly suboptimal and does not compare favorably with the results of Gloria and Otto~\cite{GO3} under spectral gap assumptions, who proved that correctors are almost bounded (see also Gloria, Neukamm and Otto~\cite{GNO3} for more results under spectral gap assumptions). Indeed, it seems to be an open question (and there is some doubt) whether finite range of dependence implies a spectral gap-type inequality in $d>1$. The best previous result for finite range of dependence was for $\beta(d,\Lambda)>0$ very small (cf.~\cite{AS}). Moreover, as mentioned above, our methods are applicable under essentially any mixing condition and we believe yield essentially optimal estimates under weaker mixing conditions (such as mixing conditions so slow that spatial averages of the coefficients on scale $R$ converge slower than $R^{-1}$ to their mean). This will be explained in more details in future papers. 

\smallskip

The novelty of Theorem~\ref{t.correctors} is not however in its statement, but in its proof.

\subsection{Outline of the paper}

Our results rely crucially on $C^{k,1}$ estimates for solutions of \eqref{e.pde} on \emph{mesoscopic} scales. This was essentially proved in~\cite{AS}, where the higher regularity theory in stochastic homogenization was introduced; the precise statements we require are proved in Section~\ref{s.higherreg}. In Section~\ref{s.propertiesJ}, we make the fundamental observation that the quantity $J$ in \eqref{e.defJ} can be expressed as a ``modulated energy" of the difference of the minimizers of $\mu$ and $\nu$. 

\smallskip

We break the proof of Theorem~\ref{t.conv} into two main steps. First, in Section~\ref{s.boostrap.cubes}, we use an induction argument on the exponent $\alpha$ to show that, for every $\alpha< \frac1d$, $p \in \R^d$ and cube $\cu \in \mcl C$,
\begin{equation}
\label{e.conv-E}
\Ll| \E[\nu(\cu,p)] - \frac 1 2 p \cdot \ahom p \Rr| \le C |p|^2 \, |\cu|^{-\alpha},
\end{equation}	
and a similar estimate with $\mu$ in place of $\nu$. This statement is \emph{a priori} much weaker than the conclusion of Theorem~\ref{t.conv}, but we show in Section~\ref{s.magic} that the subadditivity of $-\mu$ and $\nu$ can be used to upgrade the stochastic integrability from $L^1$ to exponential moments without sacrificing any of the exponent.

\smallskip

Finally, in Section~\ref{s.sublin}, we present a functional inequality, which appears to be new and which we call \emph{multiscale Poincar\'e inequality}. It has a wavelet flavor and converts control over the spatial averages of the gradient of a function in triadic subcubes into an estimate on the oscillation of the function itself. This, together with the regularity theory, reduces Theorem~\ref{t.correctors} to Theorem~\ref{t.conv}.

\section{Higher regularity on mesoscopic scales}
\label{s.higherreg}

A cornerstone of the methods in this paper is a quenched $C^{k,1}$ estimate for solutions of~\eqref{e.pde} on \emph{mesoscopic} scales. This was essentially proved in~\cite{AS}, where the higher regularity theory in stochastic homogenization was introduced.

\smallskip

The rough statement, which is given precisely in the following theorem, asserts that a solution of~\eqref{e.pde} can be approximated, just as well as a harmonic function, by $k$th degree harmonic polynomials -- on length scales larger than a fixed \emph{mesoscopic} scale (or the microscopic scale, if  $k=0$). We let $\Ahom_k$ denote the set of polynomials of degree at most~$k$ which are $\ahom$-harmonic.

\begin{theorem}[$C^{k,1}$ regularity on mesoscopic scales]
\label{t.mesoregularity}
Fix $s\in (0,d)$ and $k\in\N$. There exist an exponent $\delta(s,d,\Lambda)>0$, a constant $C(s,k,d,\Lambda) < \infty$ and an $\F$-measurable random variable $\mathcal{X}:\Omega\to [1,\infty)$ satisfying the estimate
\begin{equation}
\label{e.minimalradius}
\E \left[ \exp\left( \mathcal{X}^s \right) \right] < \infty
\end{equation}
and such that, for every $R\geq 2\X$, $v \in \A(B_R)$ and $r \in \left[ \mathcal{X} \vee R^{\frac{k}{k+\delta}} ,\frac12R\right]$,
\begin{equation}
\label{e.mesoregularity} 
\inf_{w\in \Ahom_k}\left\| v-w \right\|_{\underline{L}^2(B_r)} \leq C \left( \frac rR \right)^{k+1} \left\| v \right\|_{\underline{L}^2(B_R)}.
\end{equation}
\end{theorem}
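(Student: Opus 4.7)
The plan is to prove \eqref{e.mesoregularity} by combining a quantitative homogenization step with interior regularity for the constant-coefficient equation, in the Campanato-Schauder style of~\cite{AS}, extended from the Lipschitz case ($k=0$) to higher order $k\in\N$. Introduce the $k$-th order excess functional
\begin{equation*}
\mathcal{E}_k(v,r) := \inf_{w\in\Ahom_k} \|v-w\|_{\underline{L}^2(B_r)},
\end{equation*}
so that the claim reads $\mathcal{E}_k(v,r) \leq C(r/R)^{k+1} \|v\|_{\underline{L}^2(B_R)}$. The $k=0$ instance is the uniform Lipschitz estimate of~\cite[Theorem~1.2]{AS}; the present theorem extends it to all $k$ at the cost of a progressively worse mesoscopic threshold.

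The heart of the argument is a single-scale comparison with two inputs. First, given $v \in \A(B_R)$, let $\bar v$ be the $\ahom$-harmonic function on $B_R$ with the same boundary data; a quantitative two-scale comparison, derived in~\cite{AS} from the control over the subadditive quantities $\nu$ and $\mu$ on subcubes of $B_R$, yields
\begin{equation*}
\|v - \bar v\|_{\underline{L}^2(B_R)} \leq C R^{-\gamma} \|v\|_{\underline{L}^2(B_R)}
\end{equation*}
for some fixed $\gamma(d,\Lambda) > 0$, valid whenever $R \geq \mathcal{X}$, with $\mathcal{X}$ a random minimal scale. This is the sole probabilistic ingredient: the finite-range-of-dependence assumption (P2), combined with near-independence of $\nu$ on disjoint $R$-cubes, upgrades the estimate to give the stretched exponential bound $\E[\exp(\mathcal{X}^s)] < \infty$ for every $s<d$. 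Second, a deterministic interior regularity statement: since $\bar v$ is $\ahom$-harmonic and hence smooth, its $k$-th Taylor polynomial $w$ at the origin lies in $\Ahom_k$ and satisfies the Cauchy-type bound
\begin{equation*}
\|\bar v - w\|_{\underline{L}^2(B_r)} \leq C(r/R)^{k+1} \|\bar v\|_{\underline{L}^2(B_R)}, \qquad r \leq R/2.
\end{equation*}
Summing the two contributions via the triangle inequality (together with the volume rescaling $\|f\|_{\underline{L}^2(B_r)} \leq (R/r)^{d/2}\|f\|_{\underline{L}^2(B_R)}$ applied to $f = v - \bar v$) yields the single-step bound
\begin{equation*}
\mathcal{E}_k(v, r) \leq C\bigl[(r/R)^{k+1} + (R/r)^{d/2}R^{-\gamma}\bigr] \|v\|_{\underline{L}^2(B_R)}.
\end{equation*}

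Matching the two terms on the right-hand side gives the mesoscopic threshold: for $r \gtrsim R^{k/(k+\delta)}$ with $\delta$ determined by $\gamma$, $k$, and $d$, the polynomial gain $(r/R)^{k+1}$ dominates the homogenization error, and the bound $\mathcal{E}_k(v,r) \leq C(r/R)^{k+1}\|v\|_{\underline{L}^2(B_R)}$ follows. A dyadic iteration of the one-step estimate in the Campanato spirit of~\cite{AS} can be used to distribute the homogenization error more efficiently across scales and to recover the stated form of the mesoscopic exponent; the iteration produces a geometric gain $\theta^{n(k+1)}$ from the polynomial part and a geometric sum of homogenization errors which is dominated by its smallest-scale term, with a Caccioppoli-type estimate absorbing the fluctuation of $\|v\|_{\underline{L}^2(B_{r_j})}$ along the iteration. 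The principal technical obstacle is the bookkeeping of this iteration to yield precisely the exponent $k/(k+\delta)$ and to verify that a single random minimal scale $\mathcal{X}$ -- the one inherited from the base-case homogenization estimate of~\cite{AS} -- suffices uniformly for all $k\in\N$; this is reasonable because the only place randomness enters is the two-scale comparison at the top of each iteration, and the same bound is re-used at each scale rather than redrawn.
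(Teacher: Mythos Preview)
Your approach is essentially the same as the paper's: combine the quenched homogenization error estimate from~\cite{AS} (the paper's Proposition~\ref{p.quenchedEE}) with interior regularity of $\ahom$-harmonic functions, and iterate in the Campanato style. The paper packages the iteration as a separate deterministic lemma (Lemma~\ref{l.regularity}), whose hypothesis is precisely your harmonic-approximation step, and whose conclusion is $D_k(r) \leq C(r/R)^{k+1}D_k(R) + Cr^{-\alpha}(r/R)D_0(R)$; the threshold $r\geq R^{k/(k+\delta)}$ then falls out with $\delta$ equal to the homogenization exponent, independent of~$k$, exactly as you anticipate the iteration should deliver.

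One technical point worth flagging: the control of $\|v\|_{\underline{L}^2(B_{r_j})}$ (equivalently $D_0(r_j)$) along the iteration does \emph{not} come from a Caccioppoli estimate. Caccioppoli trades $L^2$ of the gradient for $L^2$ of the function on a larger ball, which is the wrong direction here. What is actually needed is the bound $D_0(r)/r \leq C\,D_0(R)/R$ for all $r\in[\mathcal{X},R/2]$ --- i.e., the $k=0$ Lipschitz case itself. The paper's lemma therefore first re-proves the $k=0$ case inside the iteration (Step~3) and then feeds it into the $k\geq 1$ argument (Step~4), where the error terms $r_j^{-\alpha}D_0(r_j)$ are bounded via $D_0(r_j)\leq C(r_j/R)D_0(R)$. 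Since you already cite the $k=0$ result as known from~\cite{AS}, this is a minor correction to your bookkeeping rather than a gap in the strategy.
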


Theorem~\ref{t.mesoregularity} is essentially proved in~\cite{AS}. That paper only stated the (uniform) $C^{0,1}$ estimate and left (mesoscopic) higher regularity statements for~$k\geq 1$ implicit in the proof. To prove the proposition, we just need to expound the argument from~\cite{AS}. 

\smallskip

A second ingredient in the proof of Theorem~\ref{t.mesoregularity}, which also plays a key role in the bootstrap argument, is the following quenched error estimate for the Dirichlet problems proved in~\cite{AS}. It gives a quenched, deterministic estimate for the error in homogenization for the Dirichlet problem on length scales larger than a certain random scale of characteristic size $O(1)$. 

\begin{proposition}[{\cite[Theorem 1.1]{AS}}]
\label{p.quenchedEE}
Fix $s\in (0,d)$, $\ep>0$, and let $U \subseteq B_1$ be a Lipschitz domain. There exists an exponent $\delta(d,\Lambda,s,\ep)>0$, a constant $C(s,d,\Lambda,U) < \infty$ and an $\F$-measurable random variable $\mathcal{R}:\Omega\to [1,\infty)$, which depends only on $(d,\Lambda,s,\ep)$ and satisfies the estimate
\begin{equation}
\label{e.minimalradiusEE}
\E \left[ \exp\left( \mathcal{R}^s \right) \right]  < \infty,
\end{equation}such that, for every $r\geq \mathcal{R}$, $f\in W^{1,2+\ep}(rU)$ and solutions $u,\overline{u} \in f+H^1_0(rU)$ of 
\begin{equation*}
-\nabla \cdot \left( \a\nabla u \right) = 0 \quad \mbox{and} \quad -\nabla \cdot \left( \ahom \nabla \overline{u} \right) = 0 \quad \mbox{in} \ rU,
\end{equation*}
we have the estimate
\begin{equation}
\label{e.quenchedEE}
\frac1{r}  \left\| u - \overline{u} \right\|_{\underline{L}^2(rU)} \leq r^{-\delta} \left\| \nabla f \right\|_{\underline{L}^{2+\ep}(rU)}. 
\end{equation}
\end{proposition}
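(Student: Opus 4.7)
The plan is to derive Proposition~\ref{p.quenchedEE} in three stages: an expected algebraic rate for the subadditive energies, a quenched realization of that rate at a random scale $\mathcal{R}$ via concentration through subadditivity, and a two-scale expansion that converts energy convergence into $L^2$ closeness of Dirichlet solutions, modulo a boundary layer controlled by Meyers integrability.

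\textbf{Step 1: Expected rate.} Using the subadditivity~\eqref{e.subadd} of $\nu$, the superadditivity of $\mu$, and the identity~\eqref{e.defJ}, the quantity $J(\cu_R, p, \ahom p) \geq 0$ is itself subadditive and uniformly bounded by $C|p|^2$. A Dal Maso--Modica style argument gives $\E[J(\cu_R, p, \ahom p)] \to 0$, and an iteration over dyadic scales together with (P2) upgrades this to an algebraic rate
\[
\E[J(\cu_R, p, \ahom p)] + \Ll|\E[\nu(\cu_R, p)] - \tfrac12 p \cdot \ahom p\Rr| \leq C |p|^2 R^{-\alpha}, \qquad \alpha=\alpha(d,\Lambda)>0.
\]

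\textbf{Step 2: Concentration and random minimal scale.} Because $\nu(\cu_R, p) \in [0, \tfrac{\Lambda}{2}|p|^2]$ is subadditive in the partition into unit subcubes, resampling the coefficient field on any unit-size region perturbs $\nu(\cu_R, p)$ by at most $O(R^{-d}|p|^2)$. Combined with (P2), a bounded-differences (Azuma--Hoeffding) argument produces Gaussian concentration with variance $O(R^{-d}|p|^4)$. Define $\mathcal{R}(\a)$ to be the smallest dyadic scale $r_0\ge 1$ such that, for every triadic subcube $\cu\subset \cu_{4r}(0)\supset rU$ of side length $r\ge r_0$ and every $(p,q)$ in a finite net of the unit sphere,
\[
\Ll|\nu(\cu, p) - \tfrac12 p \cdot \ahom p\Rr| + \Ll|\mu(\cu, q) + \tfrac12 q \cdot \ahom^{-1} q\Rr| \leq |\cu|^{-\alpha/2}.
\]
A union bound over scales and over the net, combined with the tail estimate from concentration, yields $\E[\exp(\mathcal{R}^s)] < \infty$ for every $s < d$.

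\textbf{Step 3: Two-scale expansion.} Writing $\Phi_k := \Phi(\cdot, e_k)$ for the first-order corrector, fix a cutoff $\eta \in C^\infty_c(rU)$ with $\eta \equiv 1$ on $\{x \in rU : \dist(x, \partial(rU)) \geq r^{1-\theta}\}$ and $\|\nabla \eta\|_\infty \lesssim r^{\theta-1}$, where $\theta \in (0,1)$ is a parameter to be optimized. Form the two-scale ansatz
\[
\tilde u := \bar u + \eta \, \Phi_k \, \partial_k \bar u,
\]
so that $u - \tilde u \in H^1_0(rU)$ is an admissible test function for the equation of $u-\tilde u$. The resulting energy identity decomposes into an interior contribution, controlled by the spatial averages of $\a(\nabla \Phi_k + e_k) - \ahom e_k$ on triadic subcubes of $rU$ (and hence by Step~2 via the equivalence between flux averages and $\mu$-type quantities dictated by $J$), and a boundary-layer contribution, controlled by H\"older's inequality applied to $\nabla \bar u$ on $\{\eta \ne 1\}$, using the Meyers-type a priori estimate $\|\nabla \bar u\|_{\underline{L}^{2+\ep}(rU)} \lesssim \|\nabla f\|_{\underline{L}^{2+\ep}(rU)}$ and the volume bound $|\{\eta \ne 1\}| \lesssim r^{-\theta}|rU|$. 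Optimizing $\theta$ produces~\eqref{e.quenchedEE} with some $\delta(d,\Lambda,s,\ep) > 0$.

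\textbf{Main obstacle.} The delicate step is Step~3, specifically the boundary layer. Inside the strip of width $r^{1-\theta}$ there is no corrector-based cancellation, so the only available smallness is the volume fraction of the strip; it is H\"older's inequality with the Meyers exponent $2+\ep$ that converts this volume into an algebraic gain. The assumption $f \in W^{1,2+\ep}$ is indispensable here: without higher integrability of $\nabla f$ the boundary-layer loss would match the interior gain in order, and no effective rate could be produced. Consequently $\delta$ is necessarily strictly smaller than $\alpha$ and is determined by the optimal balance between the interior rate $r^{-\alpha}$ and the boundary loss $r^{-\theta\ep/(2+\ep)}$.
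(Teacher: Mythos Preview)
The paper does not prove Proposition~\ref{p.quenchedEE}; it is quoted verbatim from~\cite{AS} and used as a black box. So there is no ``paper's own proof'' to compare against here. That said, your outline is close in spirit to what~\cite{AS} does, but Step~2 contains a genuine gap.

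Your claim that ``resampling the coefficient field on any unit-size region perturbs $\nu(\cu_R,p)$ by at most $O(R^{-d}|p|^2)$'' is not justified and is in fact false in general. If $v$ is the minimizer for $\nu(\cu_R,p)$ and you change $\a$ on a unit cube $U$, the change in $\nu$ is controlled by $|\cu_R|^{-1}\int_U |\nabla v|^2$, and there is no a~priori reason for $\int_U|\nabla v|^2$ to be $O(|p|^2)$: the gradient of the minimizer can concentrate. A uniform bound of that type is exactly the Lipschitz estimate that the whole regularity theory is built to prove, so invoking it here is circular. Consequently the Azuma--Hoeffding route does not give the claimed $R^{-d}$ variance, and the construction of $\mathcal R$ with $\E[\exp(\mathcal R^s)]<\infty$ collapses.

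The correct mechanism, which is actually spelled out in Section~\ref{s.magic} of this very paper (Theorem~\ref{t.stoch-int-tilde}), does not use bounded differences at all. Subadditivity gives $\nu(\cu_R,p)\le$ average of $\nu$ over nearly independent subcubes, so the \emph{upper} tail of $\nu$ is controlled by Hoeffding for sums of bounded independent variables; the \emph{lower} tail comes from the same argument applied to the superadditive dual $-\mu$, together with the smallness of $\E[\nu]-\E[\mu]$ from Step~1. This one-sided-plus-duality argument is what produces the stretched-exponential integrability of $\mathcal R$.

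A smaller issue: in Step~3 you use the infinite-volume corrector $\Phi_k$ in the two-scale ansatz. At this stage nothing quantitative is known about $\Phi_k$ (its sublinearity is a \emph{consequence} of Proposition~\ref{p.quenchedEE}, not an input), so you should instead use the finite-volume minimizers $-u(\cdot,\cu,e_k,0)$ on mesoscopic subcubes, whose size is controlled directly by Step~2. With that substitution, the boundary-layer analysis you describe is the right idea.
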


\begin{remark}
\label{r.FO}
The mesoscopic regularity estimate in Theorem~\ref{t.mesoregularity} implies Liouville theorems of all orders. These state roughly that, for each $k\in\N$, the subspace of $\A(\Rd)$ consisting of functions which grow at most like $o\left( |x|^{k+1} \right)$ has the same dimension as $\Ahom_k$. The latter functions correspond precisely to the \emph{$k$th order correctors}. Note that, in view of the results of~\cite{AM}, Theorem~\ref{t.mesoregularity} and therefore these Liouville theorems hold in much greater generality than we present here (e.g., for nonlinear equations and with weaker mixing conditions). A similar result was recently proved by Fischer and Otto~\cite{FO}, who developed a similar higher regularity theory in the general stationary ergodic setting. Similar to~\cite{GNO}, they proceed a bit differently: rather than measure the distance of an element of $\A(B_R)$ to $\Ahom_k$, they measure its distance to the $k$th order correctors. We believe that while both approaches are of interest, the one presented here is  more faithful to what a $C^{k,1}$ estimate should be in this context (we would like, for example, to measure the $C^{k,1}$ seminorm of a $j$th order corrector, for $j\leq k$, and not get zero, otherwise we gain no information on the correctors themselves). See below Remark~4.5 in~\cite{ASh} for some similar comments.
\end{remark}

That a quantitative estimate for the error in homogenization for the Dirichlet problem implies higher regularity estimates via a Campanato iteration was an idea introduced in~\cite{AS}. Here we formulate a version in the following lemma, which is a variation of \cite[Lemma 5.1]{AS}.

\begin{lemma}
\label{l.regularity}
Fix $R\geq 2$, $\alpha>0$, $p\in [1,\infty]$ and $u \in L^p(B_R)$. For each $k\in\N$ and $s\in \left(0, R\right]$, denote
\begin{equation*} \label{}
D_k(s):= \inf_{w \in \Ahom_k} \left\| u-w \right\|_{\underline{L}^p(B_s)}  
\end{equation*}
Assume that $h \in \left[1,\frac12R\right]$ and  have the property that, for every $r \in \left[h,\frac12R\right]$, there exists an  $\ahom$-harmonic function $v\in C^\infty(B_r)$ such that 
\begin{equation} \label{e.harmonicapprox}
\left\| u-v \right\|_{\underline{L}^p(B_r)} 
\leq r^{-\alpha} D_0(2r). 
\end{equation}
Then, for each $k\in\N$, there exists a constant $C(d,\Lambda,k,\alpha) <\infty$ such that, for every $r \in \left[ h, \frac12R \right]$,
\begin{equation} \label{e.Ck1regularity}
D_k(r) \leq C \left(\frac r R  \right)^{k+1} D_k(R) + C r^{-\alpha} \left( \frac rR \right) D_0(R).
\end{equation}

%
\end{lemma}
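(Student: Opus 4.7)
The argument is a Campanato-type iteration. The core one-step estimate, valid for $r \in [h, R/2]$ and $\theta \in (0, 1/2]$, reads
\[ D_k(\theta r) \leq C_1 \theta^{k+1} D_k(r) + C_2 \theta^{-d/p} r^{-\alpha} D_0(2r), \]
with constants $C_1, C_2$ depending only on $(d, \Lambda, k, p)$. To establish it, I pick a near-minimizer $P \in \Ahom_k$ for $D_k(r)$ and apply hypothesis~\eqref{e.harmonicapprox} at scale $r$ to get an $\ahom$-harmonic $v$ with $\|u - v\|_{\underline{L}^p(B_r)} \leq r^{-\alpha} D_0(2r)$. Since $\ahom$ is constant, derivatives of $\ahom$-harmonic functions remain $\ahom$-harmonic, so the degree-$k$ Taylor polynomial $Q$ of $v - P$ at the origin lies in $\Ahom_k$. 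Interior $C^{k+1}$ regularity for the constant-coefficient operator $-\nabla \cdot \ahom \nabla$ yields
\[ \|v - P - Q\|_{\underline{L}^p(B_{\theta r})} \leq C \theta^{k+1} \|v - P\|_{\underline{L}^p(B_r)}, \]
and the triangle inequality, together with $P + Q \in \Ahom_k$ being a competitor in $D_k(\theta r)$, closes the one-step estimate.

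I then iterate along the geometric scale $r_j = \theta^j R/2$, choosing $\theta = \theta(d, \Lambda, k, \alpha) \in (0,1/2]$ small enough that $C_1 \theta^{k+1+\alpha+d/p} \leq 1/2$, so the geometric series below converges. Telescoping $N$ times gives
\[ D_k(r_N) \leq (C_1 \theta^{k+1})^N D_k(R/2) + C_2 \theta^{-d/p} \sum_{j=0}^{N-1} (C_1 \theta^{k+1})^{N-1-j} r_j^{-\alpha} D_0(2 r_j). \]
Combining this with the crude bound $D_0(2 r_j) \leq C (R/r_j)^{d/p} D_0(R)$ (from H\"older's inequality applied to $\|u - (u)_{B_R}\|_{\underline{L}^p(B_{2r_j})}$), evaluating the resulting geometric sums, and comparing $D_k(R/2)$ with $D_k(R)$ produces both the leading $(r/R)^{k+1} D_k(R)$ contribution and the error $r^{-\alpha}(r/R) D_0(R)$. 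A routine comparison between $r_N$ and an arbitrary $r \in [h, R/2]$ (which differ by at most a factor $\theta^{-1}$) transfers the estimate from the discrete scales to all $r$.

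The main technical difficulty is the bookkeeping needed to ensure that the compound factor $(C_1 \theta^{k+1})^N$ reproduces the clean $(r/R)^{k+1}$ scaling in the first term, and that the geometric sum for the error collapses to precisely $r^{-\alpha}(r/R) D_0(R)$ rather than leaving an unwanted factor of $(R/r)^{d/p}$ or an $\varepsilon$-loss in the exponent. The freedom to choose $\theta$ depending on $(k, \alpha, d, \Lambda)$, matched by the corresponding dependence of $C$ on these same parameters, is what enables this precise matching; the small choice of $\theta$ both improves the geometric decay $(C_1\theta^{k+1})^N$ past the desired exponent and absorbs the $\theta^{-d/p}$ blow-up coming from the error term.
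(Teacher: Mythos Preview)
Your one-step estimate and overall scheme are right, but the plain iteration you describe does \emph{not} produce the stated bound; both terms on the right of~\eqref{e.Ck1regularity} come out with the wrong exponents.

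For the error term: with the crude bound $D_0(2r_j)\le C(R/r_j)^{d/p}D_0(R)$, already the single summand $j=N-1$ contributes
\[
(C_1\theta^{k+1})^0\, r_{N-1}^{-\alpha}\,\bigl(R/r_{N-1}\bigr)^{d/p}D_0(R)\;\sim\; r_N^{-\alpha}\,(R/r_N)^{d/p}\,D_0(R),
\]
which exceeds the target $r_N^{-\alpha}(r_N/R)D_0(R)$ by the huge factor $(R/r_N)^{1+d/p}$. No choice of $\theta$ repairs this: the crude bound \emph{grows} as $r_j$ shrinks. The paper handles this by first proving the case $k=0$ separately (Step~3), which is itself a nontrivial Campanato argument tracking the \emph{affine} approximations $w_{1,j}$ and summing the increments $|\nabla w_{1,j+1}-\nabla w_{1,j}|$; this yields the Lipschitz-type bound $D_0(r)\le C(r/R)D_0(R)$ for all $r\in[h,R]$, which is then fed back as the correct replacement for your crude bound.

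For the leading term: $(C_1\theta^{k+1})^N=C_1^{N}\theta^{N(k+1)}\sim C_1^{N}(r_N/R)^{k+1}$, and if $C_1>1$ the factor $C_1^{N}$ blows up. Choosing $\theta$ small forces $C_1\theta^{k+1}\le\theta^{k+1-\varepsilon}$ for any $\varepsilon>0$ but never $\le\theta^{k+1}$, so a direct iteration gives only $(r/R)^{k+1-\varepsilon}$. The paper recovers the sharp exponent in Step~4 by working instead with the degree-$(k{+}1)$ approximants $w_{k+1,j}$: it bounds $D_k(r_j)$ by $r_j\tilde D_{k+1}(r_j)+r_j|\nabla^{k+1}w_{k+1,j}|$, shows the top-order coefficients $\nabla^{k+1}w_{k+1,j}$ form a Cauchy sequence with limit controlled by $R^{-k-1}D_k(R)$, and thereby extracts the extra factor of $r_j/R$.

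In short, you have correctly identified ``the main technical difficulty,'' but it is not mere bookkeeping that the freedom in $\theta$ resolves; two additional ideas are needed --- a preliminary $k=0$ Lipschitz estimate, and tracking the $(k{+}1)$st-order polynomial coefficients --- and these are exactly what the paper supplies.
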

\begin{proof}
Fix $k\in\N$. Throughout, we denote by $C$ and $c$ positive constants depending only on $(k,\alpha,\Lambda)$ which may vary in each occurrence. In the first few steps, we make some preliminary observations and introduce the  notation we need in the main part of the argument, which begins in Step~3.

\smallskip

\noindent \emph{Step 1.}  We first observe that the triangle inequality and the hypothesis~\eqref{e.harmonicapprox} imply that, for every $r \in \left[h, \frac12R\right]$ and $s\in \left(0, \frac12r \right]$, 
\begin{equation} \label{e.excessdecay}
D_k(s) \leq C \left( \frac sr \right)^{k+1} D_k(r)  +  C \left( \frac rs \right)^{\frac dp} r^{-\alpha} D_0(r).
\end{equation}
Selecting a harmonic function $v\in C^\infty(B_r)$ to satisfy~\eqref{e.harmonicapprox}, we find that 
\begin{align*} \label{}
D_k(s) & =  \inf_{w \in \Ahom_k} \left\| u-w \right\|_{\underline{L}^p(B_s)} \\
& \leq \inf_{w\in \Ahom_k} \left\| v-w \right\|_{\underline{L}^p(B_s)} +  \left\| u-v \right\|_{\underline{L}^p(B_s)}  \\
& \leq  C\left( \frac sr \right)^{k+1}  \inf_{w \in \Ahom_k}\left\| v-w \right\|_{\underline{L}^p(B_r)} + Cr^{-\alpha} D_0(r) \left( \frac{r}{s}   \right)^{\frac dp} .
\end{align*} 
In the last line, we used the fact that any $\ahom$-harmonic function $v$ satisfies, for every $0<s\leq \frac12r$, 
\begin{equation*} \label{}
\inf_{w\in \Ahom_k} \left\| v-w \right\|_{L^\infty(B_s)}
\leq C\left( \frac sr \right)^{k+1} \inf_{w\in \Ahom_k} \left\| v-w \right\|_{\underline{L}^p(B_r)}.
\end{equation*}
Next we use the triangle inequality and~\eqref{e.harmonicapprox} a second time to get
\begin{equation*} \label{}
 \inf_{w\in \Ahom_k}  \left\| v-w \right\|_{\underline{L}^p(B_r)}
 \leq D_k(r) + Cr^{-\alpha} D_0(r). 
\end{equation*}
Substituting into the inequality above, we get 
\begin{align*} \label{}
D_k(s) 
& \leq C \left( \frac sr \right)^{k+1} \left( D_k(r) +  r^{-\alpha} D_0(r) \right) + C\left( \frac rs \right)^{\frac dp} r^{-\alpha} D_0(r)  \\
& \leq C \left( \frac sr \right)^{k+1} D_k(r)  + C \left( \frac rs \right)^{\frac dp} r^{-\alpha} D_0(r). 
\end{align*}

\smallskip

\noindent \emph{Step 2.} We set up the rest of the argument. Fix $\theta = \theta(k)\geq c$ so small that $C \theta = \frac14$, where~$C$ is the constant in~\eqref{e.excessdecay}, so that the latter inequality implies
\begin{equation*} \label{}
\tilde D_k(\theta r ) \leq \frac12 \tilde D_k(r) + C r^{-k-\alpha} D_0(r),
\end{equation*}
where here and in what follows we set $\tilde D_k(s):= s^{-k} D_k(s)$.  An iteration of the previous inequality gives
\begin{equation} \label{e.edpause1}
\tilde D_k(\theta^m r) \leq 2^{-m} \tilde D_k(r) + C \sum_{j=0}^{m-1} 2^{j-m} \left( \theta^j r \right)^{-k-\alpha} D_0\left( \theta^j r\right)
\end{equation}
provided $\theta^{m-1} r \geq h$. To shorten the notation, we denote $r_j := \theta^j R$, $B^j := B_{r_j}$ and by $w_{k,j}$ the best $k${th} degree polynomial approximation of $u$ in $B^j$, that is, $w_{k,j}$ satisfies
\begin{equation*} 
\left\| u - w_{k,j} \right\|_{\underline{L}^p(B^j)} = \inf_{w \in \mathcal{P}_k}  \left\| u - w \right\|_{\underline{L}^p(B^j)}
\end{equation*}

\smallskip

\noindent \emph{Step 3.} We now complete the proof of the proposition in the case $k=0$. In fact, this has been already proved in~\cite[Lemma 5.1]{AS}, but we give the argument for the sake of completeness. 

\smallskip

First, since we may add constants to both $u$ and $v$, we may assume without loss of generality that $w_{0,0} = 0$. It follows that 
\begin{equation*} 
\left\|  w_{1,0}   \right\|_{\underline{L}^p(B_R)}
 \leq 2D_0(R),
\end{equation*}
and hence we easily get 
\begin{equation} \label{e.edpause31}
\left\|w_{1,0}\right\|_{L^\infty(B_R)} + R\left\|\nabla w_{1,0}\right\|_{L^\infty(B_R)} \leq C D_0(R).
\end{equation}
Using~\eqref{e.edpause1} we obtain
\begin{align*} 
 \frac1{r_{m+1}}  \left\| w_{1,m+1} - w_{1,m} \right\|_{\underline{L}^p(B^{m+1}) } 
&  \leq \tilde D_1(r_{m+1}) +   \theta^{-1-\frac dp} \tilde D_1(r_{m}) \\
& \leq 2^{-m} C \tilde D_1(R) + C \sum_{j=0}^{m}   2^{j-m} r_j^{-1-\alpha} D_0\left( r_j \right).
\end{align*}
We thus deduce that 
\begin{equation*} \label{}
\left|\nabla w_{1,m+1} - \nabla w_{1,m}\right| \leq 2^{-m} C \tilde D_1(R) + C \sum_{j=0}^{m}   2^{j-m}  r_j^{-\alpha} \frac{D_0\left( r_j \right)}{r_j},
\end{equation*}
and it follows by summation that 
\begin{equation*} \label{}
\left|\nabla w_{1,n} - \nabla w_{1,0}\right| \leq C \tilde D_1(R)  + C \sum_{j=0}^{n}  r_j^{-\alpha} \frac{D_0\left( r_j \right)}{r_j}
\end{equation*}
provided that $r_n \geq h$. Combining this with~\eqref{e.edpause1} and~\eqref{e.edpause31} leads to  
\begin{equation*} 
\frac{D_0(r_n)}{r_n}  \leq  \tilde D_1(r_n) +   \left|\nabla w_{1,n} \right| \leq  C \frac{D_0(R)}{R} + C \sum_{j=0}^{n-1}  r_j^{-\alpha} \frac{D_0\left(r_j \right)}{r_j} .
\end{equation*}
Taking supremum then gives 
\[
\sup_{m \in \{0,\ldots ,n\}} \frac{D_0(r_m)}{r_m}  \leq C \frac{D_0(R)}{R} +  C \sum_{j=0}^{n}  r_j^{-\alpha}  \sup_{m \in \{0,\ldots ,n\}} \frac{D_0(r_m)}{r_m} 
\]
provided that $r_n \geq h$. Letting now $n^* = n^*(d,\alpha)$ be the largest integer such that $ C \sum_{j=0}^{n^*}  r_j^{-\alpha} \leq \frac12$ and $r_{n^*}\geq h$ 
we obtain after straightforward manipulations that
\begin{equation} \label{e:mesoregconcl1}
\sup_{h \leq r \leq R} \frac{D_0(r)}{r} \leq C \frac{D_0(R)}{R}.
\end{equation}

\smallskip

\noindent \emph{Step 4.} We complete the argument in the case of general $k\in\N$. 
Following the reasoning of the previous step, letting $\tilde w_{k+1,j}$ stand for the $k^{th}$ order polynomial part of $w_{k+1,j}$, we get, by~\eqref{e.edpause1} and~\eqref{e:mesoregconcl1},
\begin{align}  \label{e.edpause42} 
 \tilde D_{k}(r_{j}) 
& \leq \frac 1{r_{j}^{k}} \left\| u - \tilde w_{k+1,j} \right\|_{\underline{L}^p(B^j)} \\ \nonumber 
& \leq r_j \tilde D_{k+1}(r_{j}) + \frac 1{r_{j}^{k}}\left\| w_{k+1,j} - \tilde w_{k+1,j} \right\|_{\underline{L}^p(B^j)} \\
& \leq r_j \left( 2^{-j} \tilde D_{k+1}(R) + C r_j^{-k-\alpha} \frac{D_0(R)}{R} + \left|\nabla^{k+1} w_{k+1,j}\right|  \right).\nonumber
\end{align}
Thus we are left to estimate $\left|\nabla^{k+1} w_{k+1,j} \right| $. Since 
\begin{align*} 
\frac 1{r_{j+1}^{k+1}} \left\|w_{k+1,j+1} -  w_{k+1,j+1} \right\|_{\underline{L}^p(B^{j+1})}
& \leq C\left(\tilde D_{k+1}(r_{j+1}) + \tilde D_{k+1}(r_j)  \right) \\ 
& \leq C 2^{-j} \tilde D_{k+1}(R) + C r_j^{-k-\alpha} \frac{D_0(R)}{R},
\end{align*}
we deduce that 
\begin{equation} \label{e.edpause43} 
\left|\nabla^{k+1} w_{k+1,j+1} - \nabla^{k+1} w_{k+1,j}\right| \leq C 2^{-j} \tilde D_{k+1}(R) + C r_j^{-k-\alpha} \frac{D_0(R)}{R}.
\end{equation}
Indeed, for any $n\in\N$ and polynomial $\phi\in\mathcal{P}_n$, we have that 
\begin{equation} \label{e.polynomials silly}
\left( \fint_{B_r} \left|\phi(x)\right|^p \, dx  \right)^{\frac 1p} = \left( \fint_{B_1} \left|\phi(r x)\right|^p \, dx  \right)^{\frac 1p} \geq c \sup_{x \in B_1} \left|\phi(r x)\right| \geq c  r^{n} \left|\nabla^n \phi\right|
\end{equation}
for a constant $c = c(d,n)>0$. This is due to scaling and Lemma~\ref{l.Markov}, which is stated and proved below. 

\smallskip

Now~\eqref{e.edpause43} implies after summation that
\begin{equation*}
|\nabla^{k+1} w_{k+1,j} - \nabla^{k+1} w_{k+1,0}| \leq C  \tilde D_{k+1}(R) + C r_j^{-k-\alpha} \frac{D_0(R)}{R}.
\end{equation*}
Furthemore, since
\begin{align*} 
\left\|w_{k,0} -  w_{k+1,0} \right\|_{\underline{L}^p(B_R)} 
 \leq D_{k+1}(R) + D_k(R) \leq 2 D_k(R),
\end{align*}
we obtain
\begin{equation*} \label{}
\left| \nabla^{k+1} w_{k+1,0}\right| \leq C R^{-k-1} D_k(R),
\end{equation*}
and consequently
\begin{align*} 
\left|\nabla^{k+1} w_{k+1,j} \right| 
& \leq  \left|\nabla^{k+1} w_{k+1,j} - \nabla^{k+1} w_{k+1,0}\right|  + \left|\nabla^{k+1} w_{k+1,0}\right| \\
& \leq  C  R^{-k-1} D_{k}(R) + C r_j^{-k-\alpha} \frac{D_0(R)}{R}.
\end{align*}
Combining this finally with~\eqref{e.edpause42} allows us to conclude with
\begin{equation*} 
D_k(r_j) \leq C \left(\frac{r_j}{R}\right)^{k+1} D_{k}(R) + C r_j^{-\alpha} \left(\frac{r_j}{R}\right) \frac{D_0(R)}{R},
\end{equation*}
from which the statement~\eqref{e.Ck1regularity} can be easily deduced. 
\end{proof}

\begin{remark}
Notice that since $D_k(R) \leq D_0(R)$, the estimate~\eqref{e.Ck1regularity} implies
\begin{equation*} \label{}
D_k(r) \leq C \left( \frac rR \right)^{k+1} D_0(R) + Cr^{-\delta} \left( \frac rR \right) D_0(R).
\end{equation*}
This is a $C^{k,1}$ estimate on scales $r$ for which the second term on the right side is smaller than the first term, that is, for $r$ satisfying
\begin{equation*} \label{}
r\gtrsim R^{\frac{k}{k+\delta}}.
\end{equation*}
\end{remark}

In the proof of Lemma~\ref{l.regularity}, we used the following fact, which is a simple consequence of the equivalence of norms in finite-dimensional vector spaces.

\begin{lemma} 
\label{l.Markov}
Fix $k \in \N$. There exists a constant $C(d,k) < \infty$ such that, for every $m \le k$ and $w\in \mathcal{P}_k$,
\begin{equation*} \label{}
\left\| \nabla^m w\right\|_{L^\infty(B_1)} \le C \Ll\| w\Rr\|_{L^1(B_1)}.
\end{equation*}
\end{lemma}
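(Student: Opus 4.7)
The plan is to exploit the fact that $\mathcal{P}_k$ is a finite-dimensional vector space of dimension $\binom{d+k}{k}$, on which all norms are equivalent and any continuous seminorm is dominated by any norm, with comparison constants depending only on $(d,k)$. The first step is to verify that $w\mapsto \|w\|_{L^1(B_1)}$ is a norm on $\mathcal{P}_k$: positive homogeneity and the triangle inequality are immediate, and if $\|w\|_{L^1(B_1)} = 0$ then $w$ vanishes a.e.\ on the open set $B_1$, hence identically, since a polynomial is determined by its values on any open set.

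Next, I would argue that $w\mapsto \|\nabla^m w\|_{L^\infty(B_1)}$ is a continuous seminorm on $\mathcal{P}_k$. The map $\mathcal{P}_k \to C(\overline{B_1})$ sending $w$ to $\nabla^m w|_{\overline{B_1}}$ is linear, hence continuous in the unique Hausdorff topology of the finite-dimensional space $\mathcal{P}_k$; composing with the continuous functional $\|\cdot\|_{L^\infty(B_1)}$ yields the claim. The unit sphere $S := \{w \in \mathcal{P}_k : \|w\|_{L^1(B_1)} = 1\}$ is compact in this topology, so
\[
C := \sup_{w \in S} \|\nabla^m w\|_{L^\infty(B_1)} < \infty,
\]
and positive homogeneity gives $\|\nabla^m w\|_{L^\infty(B_1)} \le C \|w\|_{L^1(B_1)}$ for every $w \in \mathcal{P}_k$. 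Taking the maximum of this constant over $m \in \{0,1,\ldots,k\}$ produces a single constant depending only on $(d,k)$.

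There is no real obstacle here; this is the standard ``all norms on a finite-dimensional space are equivalent'' argument, and its only role in the preceding proof of \lref{regularity} is to pass from an $\underline{L}^p$-type control on a polynomial to a pointwise bound on its derivatives (cf.~\eqref{e.polynomials silly}). A more constructive route, if one insisted on an explicit constant, would expand $w$ in the monomial basis, bound each coefficient by integrating $w$ against an appropriate dual polynomial (using the inverse of the Gram matrix of monomials on $B_1$), and then differentiate term by term; but this only obscures the underlying functional-analytic reason the estimate holds.
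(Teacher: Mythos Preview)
Your proof is correct and is precisely the argument the paper has in mind: the paper does not spell out a proof but simply remarks that the lemma ``is a simple consequence of the equivalence of norms in finite-dimensional vector spaces,'' which is exactly what you have made explicit.
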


\begin{proof}[{Proof of Theorem~\ref{t.mesoregularity}}]
The proof is essentially the same as -- indeed, even considerably simpler than (due to homogeneity) -- the argument given in the general nonlinear case considered in~\cite{AS,AM}. If we take $U=B_1$ and $\ep(d,\Lambda)>0$ to be the exponent in the interior Meyers estimate (cf.~\cite[Proposition B.6]{AM} for instance), then the solvability of the Dirichlet problem and the conclusion of Proposition~\ref{p.quenchedEE}, with $\mathcal{R}$ given there, ensures that the hypothesis of Lemma~\ref{l.regularity} is satisfied for $h = \mathcal{R}$. The lemma then yields the result for $\X=\mathcal{R}$.
\end{proof}

\section{Properties of the modulated energy~\texorpdfstring{$J$}{J}}
\label{s.propertiesJ}

We begin this section with the simple but key observation that the difference between $\nu$ and $\mu$ can be expressed as a ``modulated energy'' of the difference of the minimizers. We define $\J(\cdot,U,p,q)$ for functions $w\in H^1(U)$ by
\begin{equation*} \label{}
\J(w,U,p,q) := \fint_U  \left( - \frac12 \nabla w(x) \cdot \a(x) \nabla w(x)- p\cdot \a(x) \nabla w(x) + q\cdot \nabla w(x)  \right)\,dx.
\end{equation*}
We denote the maximum of $\J(\cdot,U,p,q)$ among solutions of the PDE by
\begin{equation*} \label{}
J(U,p,q) := \max_{w\in \A(U)} \J(w,U,p,q).
\end{equation*}
To motivate the definition of $J(U,p,q)$, we show that it is actually a familiar object: it can be decomposed into the quantities studied in~\cite{AS}, which are the focus of the analysis in that paper. 

\begin{lemma}
\label{l.modulation}
For every bounded Lipschitz domain $U\subseteq\Rd$ and $p,q\in\Rd$, 
\begin{equation} \label{e.modidentity}
J(U,p,q) = \nu(U,p) - \mu(U,q) - p\cdot q.
\end{equation}
\end{lemma}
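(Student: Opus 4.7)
The plan is to exhibit an explicit maximizer of $\J(\cdot, U, p, q)$ over $\A(U)$. Let $v^\ast \in \ell_p + H^1_0(U)$ denote the (Dirichlet) minimizer of $\nu(U,p)$ and $u^\ast \in H^1(U)$ the (Neumann) minimizer of $\mu(U,q)$; as noted just before the statement, both solve~\eqref{e.pde} in $U$, so $w^\ast := u^\ast - v^\ast$ lies in $\A(U)$. I will first compute $\J(w^\ast, U, p, q)$ and show it equals the right side of~\eqref{e.modidentity}, and then show that $w^\ast$ is actually a maximizer.

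For the evaluation, I would use the Euler--Lagrange equations for the two minimization problems, namely $\fint_U \nabla \phi \cdot \a \nabla v^\ast\, dx = 0$ for all $\phi \in H^1_0(U)$ and $\fint_U \nabla \phi \cdot (\a \nabla u^\ast - q)\, dx = 0$ for all $\phi \in H^1(U)$. Testing the first with $\phi = v^\ast - \ell_p \in H^1_0(U)$ yields $\nu(U, p) = \tfrac{1}{2} \fint_U p \cdot \a \nabla v^\ast\, dx$; testing the second successively with $\phi = u^\ast$, $\phi = v^\ast$, and the coordinate functions $\phi(x) = x_i$ yields $\mu(U, q) = -\tfrac{1}{2} \fint_U q \cdot \nabla u^\ast\, dx$, the cross-term identity $\fint_U \nabla v^\ast \cdot \a \nabla u^\ast\, dx = \fint_U q \cdot \nabla v^\ast\, dx$, and $\fint_U \a \nabla u^\ast\, dx = q$ (whence $\fint_U p \cdot \a \nabla u^\ast\, dx = p \cdot q$). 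Expanding $\J(w^\ast, U, p, q)$ with $\nabla w^\ast = \nabla u^\ast - \nabla v^\ast$, invoking the symmetry of $\a$ for the cross term, and substituting these four identities collapses the expression to $\nu(U, p) - \mu(U, q) - p \cdot q$.

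To prove maximality, I would take an arbitrary $w \in \A(U)$, set $\eta := w - w^\ast \in \A(U)$ (using that $\A(U)$ is a vector space), and expand quadratically:
\begin{equation*}
\J(w, U, p, q) - \J(w^\ast, U, p, q) = -\tfrac{1}{2} \fint_U \nabla \eta \cdot \a \nabla \eta\, dx - \fint_U \nabla \eta \cdot (\a \nabla w^\ast + \a p - q)\, dx.
\end{equation*}
Writing $v^\ast = \ell_p + \tilde v$ with $\tilde v \in H^1_0(U)$, a short manipulation gives $\a \nabla w^\ast + \a p - q = (\a \nabla u^\ast - q) - \a \nabla \tilde v$, so the linear term splits as $-\fint_U \nabla \eta \cdot (\a \nabla u^\ast - q)\, dx + \fint_U \nabla \eta \cdot \a \nabla \tilde v\, dx$. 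The first summand vanishes by the Neumann Euler--Lagrange applied to the $H^1(U)$ test function $\eta$; the second vanishes by the Dirichlet Euler--Lagrange for $\eta \in \A(U)$ paired, via symmetry of $\a$, against the $H^1_0(U)$ test function $\tilde v$. Hence $\J(w, U, p, q) - \J(w^\ast, U, p, q) = -\tfrac{1}{2} \fint_U \nabla \eta \cdot \a \nabla \eta\, dx \le 0$, which identifies $w^\ast$ as the maximizer and establishes~\eqref{e.modidentity}.

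The main subtlety will be the cancellation of the linear term in $\eta$, which requires recognizing the dual role played by $\eta \in \A(U)$: it is simultaneously an admissible $H^1(U)$ test function for the Neumann equation of $u^\ast$ and, via the symmetry of $\a$, an $\a$-harmonic test function that pairs to zero against the $H^1_0$-correction $\tilde v$ of $v^\ast$. Everything else is routine algebra.
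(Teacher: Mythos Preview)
Your argument is correct, but it is organized differently from the paper's. The paper does not split into ``evaluate at candidate'' and ``verify maximality.'' Instead, it fixes the $\nu$-minimizer $v^\ast$ and shows, for \emph{every} $u\in\A(U)$, the identity
\[
\J(u-v^\ast,U,p,q)=\nu(U,p)-\fint_U\Bigl(\tfrac12\nabla u\cdot\a\nabla u-q\cdot\nabla u\Bigr)\,dx-p\cdot q,
\]
using only that $\fint_U\nabla v^\ast=p$ and that $u-v^\ast\in\A(U)$ allows one to replace $p$ by $\nabla v^\ast$ inside $\fint_U p\cdot\a(\nabla u-\nabla v^\ast)$. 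Since $u\mapsto u-v^\ast$ is a bijection of $\A(U)$, taking the supremum over $u$ simultaneously gives the value $\nu-\mu-p\cdot q$ and identifies the maximizer as $u^\ast-v^\ast$.

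The trade-off: the paper's change-of-variables is shorter and needs fewer Euler--Lagrange identities (it never invokes the Neumann condition $\fint_U\a\nabla u^\ast=q$, nor tests against coordinate functions). Your approach is more explicit and, as a bonus, your second-order expansion $\J(w)-\J(w^\ast)=-\tfrac12\fint\nabla\eta\cdot\a\nabla\eta$ is exactly the content of the paper's subsequent Lemma~\ref{l.firstvar} (equation~\eqref{e.secondvar}), so you have effectively proved both results at once.
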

\begin{proof}
Fix $p,q\in\Rd$, $u\in \A(U)$ and let $v \in \ell_p+H^1_0(U)$ be the minimizer in the definition of~$\nu(U,p)$. Note that $v\in\A(U)$ and compute
\begin{align*}
\lefteqn{
\nu(U,p) - \fint_U \frac12 \left( \nabla u(x) \cdot \a(x) \nabla u(x) - q\cdot \nabla u(x) \right)\,dx - p\cdot q 
} \qquad & \\
& = \fint_U \left( \frac12 \nabla v(x) \cdot \a(x) \nabla v(x) - \frac12 \nabla u(x) \cdot \a(x) \nabla u(x) + q\cdot \nabla u(x)  \right)\,dx - p\cdot q \\
& = \fint_U  \bigg( - \frac12 \left( \nabla u(x) - \nabla v(x) \right) \cdot \a(x) \left( \nabla u(x) - \nabla v(x) \right) \\
& \qquad \qquad - p\cdot \a(x) \left( \nabla u(x) - \nabla v(x)  \right) + q\cdot \left( \nabla u(x)-\nabla v(x) \right)  \bigg)\,dx \\
& = \J(u-v,U,p,q).
\end{align*}
Here we used integration by parts twice, both taking advantage of the affine boundary condition for $v$ to get
\begin{equation*} \label{}
p = \fint_U \nabla v(x)\,dx
\end{equation*}
and another, which also uses that $u-v \in \A(U)$ to get
\begin{equation*} \label{}
\fint_U p\cdot  \a(x) \left( \nabla u(x) - \nabla v(x) \right)\,dx = \fint_U \nabla v(x) \cdot  \a(x) \left( \nabla u(x) - \nabla v(x) \right)\,dx.
\end{equation*}
We deduce that 
\begin{align*} \label{}
J(U,p,q) 
& = \max_{w\in \A(U)} \J(w,U,p,q)  \\
& = \max_{u\in \A(U)} \J(u-v,U,p,q) \\
& = \max_{u\in \A(U)} \left( \nu(U,p) - \fint_U \frac12 \left( \nabla u(x) \cdot \a(x) \nabla u(x) - q\cdot \nabla u(x) \right)\,dx - p\cdot q  \right) \\
& = \nu(U,p) - \mu(U,q) - p\cdot q. \qedhere
\end{align*}
\end{proof}

Notice that the proof of Lemma~\ref{l.modulation} gave more than its statement:  namely, the maximizer in the definition of $J(U,p,q)$ is precisely the difference of the minimizers of $\mu(U,q)$ and~$\nu(U,p)$.

\smallskip

In the rest of this section, we present some basic properties of $J$ which are needed in the bootstrap argument in the next section, and we fix a bounded Lipschitz domain~$U\subseteq\Rd$ throughout. We denote the unique (up to additive constants) maximizer in the definition of~$J(U,p,q)$ by
\begin{equation*}
u(\cdot,U,p,q):= \mbox{maximizer of $\J(\cdot,U,p,q)$ among functions in $\A(U)$.}
\end{equation*}
Notice that $u(\cdot,U,0,q)$ is the minimizer for $\mu(U,q)$ and $- u(\cdot,U,p,0)$ is the minimizer for $\nu(U,p)$. 

\smallskip

We next record the first and second variations of the optimization problem implicit in the definition of $J(U,p,q)$. Henceforth, we make use of the notation~\eqref{e.L2ip}.

\begin{lemma}
\label{l.firstvar}
For every $p,q\in\Rd$ and $v \in \A(U)$, 
\begin{equation}
\label{e.firstvar}
\innerply{\a\nabla u(\cdot,U,p,q)}{\nabla v}{U} = \innerply{- \a p+ q}{\nabla v}{U} 
\end{equation}
and
\begin{equation}
\label{e.secondvar}
 J(U,p,q)  - \J(u(\cdot,U,p,q)+v,U,p,q) = \frac12 \innerply{\nabla v}{\a\nabla v}{U}.
\end{equation}
\end{lemma}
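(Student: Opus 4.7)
The plan is to derive both identities from a single second-order Taylor expansion of $\J(u+tv,U,p,q)$ in $t$, using that $\A(U)$ is a linear subspace so that every $u + tv$ is admissible.

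Fix $p,q \in \Rd$, write $u := u(\cdot,U,p,q)$ for brevity, and let $v \in \A(U)$. Since $\A(U)$ defined in \eqref{e.AU} is a linear subspace of $H^1_{\mathrm{loc}}(U)$, we have $u + tv \in \A(U)$ for every $t \in \R$. Expanding the quadratic in $\nabla(u + tv)$ and using the symmetry $\a^t = \a$ of the coefficient field, I would compute
\begin{equation*}
\J(u+tv,U,p,q) = \J(u,U,p,q) + t\,L(v) - \frac{t^2}{2}\innerply{\nabla v}{\a \nabla v}{U},
\end{equation*}
where
\begin{equation*}
L(v) := \innerply{-\a \nabla u - \a p + q}{\nabla v}{U}.
\end{equation*}

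Since $u$ maximizes $\J(\cdot,U,p,q)$ over $\A(U)$ and $u + tv \in \A(U)$ for all $t$, the function $t \mapsto \J(u+tv,U,p,q)$ attains its maximum at $t = 0$; hence its derivative at $t = 0$ vanishes, giving $L(v) = 0$. Rearranging yields exactly \eqref{e.firstvar}. (Alternatively, since the expansion is a concave quadratic in $t$, the first-order condition $L(v) = 0$ is forced by the requirement that the maximum over $t \in \R$ of the right-hand side is finite.)

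Plugging $t = 1$ into the expansion and using $L(v) = 0$ then gives
\begin{equation*}
\J(u + v,U,p,q) = \J(u,U,p,q) - \frac{1}{2}\innerply{\nabla v}{\a \nabla v}{U} = J(U,p,q) - \frac{1}{2}\innerply{\nabla v}{\a \nabla v}{U},
\end{equation*}
which is \eqref{e.secondvar}. No step presents a real obstacle; the only mild subtlety is noting that $\A(U)$ being a linear subspace is what permits variations over all $v \in \A(U)$ rather than only over $H^1_0(U)$, and that uniform ellipticity guarantees strict concavity in $\nabla w$, so that the expansion indeed determines the maximizer (uniquely up to additive constants).
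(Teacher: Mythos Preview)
Your proof is correct and follows essentially the same approach as the paper: both expand $\J(u+tv,U,p,q)$ to second order in $t$, use optimality at $t=0$ to obtain \eqref{e.firstvar}, and then set $t=1$ to deduce \eqref{e.secondvar}. The only cosmetic difference is that the paper derives the first-order condition via the one-sided inequality and then replaces $v$ by $-v$, whereas you invoke the vanishing of the derivative directly.
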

\begin{proof}
For convenience, for $t\geq 0$, set $u_t:= u(\cdot,U,p,q)+ tv$. Compute
\begin{align*}
0 
&\leq \J(u_0,U,p,q) - \J(u_t,U,p,q) \\
& = t^2 \fint_{U} \left( \frac12 \nabla v(x)\cdot\a(x)\nabla v(x) \right)\, dx \\
& \qquad  + t  \fint_{U} \left(\a(x) \nabla u_0(x) + \a(x)p - q \right) \cdot  \nabla v(x)  \,dx.
\end{align*}
Dividing by $t$ and sending $t\to 0$ gives 
\begin{equation*}
  \fint_{U} \left(\a(x) \nabla u_0(x) + \a(x) p -  q \right) \cdot  \nabla v(x)  \,dx \geq 0.
\end{equation*}
Repeating the argument with $-v$ in place of $v$ yields~\eqref{e.firstvar}. Returning to the previous identity and taking $t=1$ gives~\eqref{e.secondvar}. 
\end{proof}

For reference, we observe that by taking $v=u(\cdot,U,p,q)$ in~\eqref{e.firstvar} we obtain the identities
\begin{align} \label{e.Jidentity}
J(U,p,q) = & \frac12 \innerply{\a \nabla u(\cdot,U,p,q)}{\nabla u(\cdot,U,p,q)} {U}\\ 
\nonumber = & \frac12 \innerply{-\a p+q}{\nabla u(\cdot,U,p,q)}{U}.
\end{align}
It is easy to see from~\eqref{e.firstvar} that $(p,q) \mapsto u(\cdot,U,p,q)$ is a linear map from~$\Rd\times\Rd$ into~$\A(U)$; that is, for every $p_1,p_2,q_1,q_2\in\Rd$ and $s,t\in\R$,
\begin{equation} \label{e.minimizerslinear}
u(\cdot,U,tp_1+sp_2,tq_1+sq_2) = t u(\cdot,U,p_1,q_1)+ su(\cdot,U,p_2,q_2)
\end{equation}
Likewise, $J$ is quadratic: for every $p,q\in\Rd$ and $t>0$,
\begin{equation} \label{e.Jquadratic}
J(U,tp,tq) = t^2 J(U,p,q). 
\end{equation}
We next show that $\J(\cdot,U,p,q)$ responds quadratically to perturbations near its maximum.  

\begin{lemma}
\label{l.Jconvexity}
For every $p,q\in\Rd$ and $v,w \in \A(U)$,
\begin{equation}
\label{e.lowerUC}
 \frac14 \left\| \nabla v - \nabla w \right\|_{\underline{L}^2(U)}^2
\leq  2J(U,p,q) - \J( w,U,p,q) - \J(v,U,p,q) 
\end{equation}
and
\begin{equation}
\label{e.upperUC}
2\J( v,U,p,q) - \J(w,U,p,q) - J(U,p,q)
 \leq \frac{\Lambda}{4}  \left\| \nabla v - \nabla w \right\|_{\underline{L}^2(U)}^2.
\end{equation}
\end{lemma}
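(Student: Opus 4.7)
The natural route is to parametrize around the maximizer $u := u(\cdot, U, p, q)$. Setting $\phi := v - u$ and $\psi := w - u$ places us in the situation of Lemma~\ref{l.firstvar}, since $\A(U)$ is a linear space and so $\phi, \psi \in \A(U)$. Applying \eqref{e.secondvar} to each in turn yields the two basic identities
\[
J(U,p,q) - \J(v,U,p,q) \;=\; \tfrac{1}{2}\langle \a \nabla \phi, \nabla \phi\rangle_U, \qquad J(U,p,q) - \J(w,U,p,q) \;=\; \tfrac{1}{2}\langle\a\nabla\psi, \nabla\psi\rangle_U,
\]
which recast both inequalities as quadratic questions in the $\a$-inner product.

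For \eqref{e.lowerUC}, I would sum these identities to get \(2J - \J(v) - \J(w) = \tfrac{1}{2}\langle\a\nabla\phi,\nabla\phi\rangle_U + \tfrac{1}{2}\langle\a\nabla\psi,\nabla\psi\rangle_U\), apply the lower ellipticity bound $\xi \cdot \a\xi \ge |\xi|^2$ to reduce to $\tfrac{1}{2}\bigl(\|\nabla \phi\|_{\underline{L}^2(U)}^2 + \|\nabla \psi\|_{\underline{L}^2(U)}^2\bigr)$, and then invoke the elementary parallelogram inequality $\|X\|^2 + \|Y\|^2 \ge \tfrac{1}{2}\|X-Y\|^2$ with $X = \nabla\phi$, $Y = \nabla\psi$ (so that $X - Y = \nabla v - \nabla w$), which produces the claimed factor $\tfrac{1}{4}$.

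For \eqref{e.upperUC}, the same substitution rewrites the left-hand side algebraically as
\[
2\J(v,U,p,q) - \J(w,U,p,q) - J(U,p,q) \;=\; \tfrac{1}{2}\langle\a\nabla\psi,\nabla\psi\rangle_U - \langle\a\nabla\phi,\nabla\phi\rangle_U.
\]
From here the plan is to decompose $\nabla\psi = \nabla\phi + (\nabla\psi - \nabla\phi)$ inside the first $\a$-quadratic form, apply Young's inequality in the $\a$-inner product to the resulting cross term $\langle\a\nabla\phi, \nabla(\psi-\phi)\rangle_U$ with a parameter tuned so that its $\phi$-piece is absorbed against the negative $-\langle\a\nabla\phi,\nabla\phi\rangle_U$, and then convert the surviving $\a$-form in $\nabla(\psi - \phi) = \nabla w - \nabla v$ into the Euclidean norm via the upper bound $\xi\cdot\a\xi \le \Lambda|\xi|^2$.

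The main obstacle is the calibration of constants: the stated factor $\tfrac{\Lambda}{4}$ is sharp, and I expect the correct packaging to emerge by running the maximizer expansion on the midpoint test $\tfrac{v+w}{2} \in \A(U)$ to produce the parallelogram-type identity
\[
2\J\Ll(\tfrac{v+w}{2}, U, p, q\Rr) - \J(v,U,p,q) - \J(w,U,p,q) \;=\; \tfrac{1}{4}\langle \a(\nabla v - \nabla w), \nabla v - \nabla w\rangle_U,
\]
then combining the bound $\langle \a X, X\rangle_U \le \Lambda \|X\|_{\underline{L}^2(U)}^2$ on its right-hand side with the maximality $J \ge \J\Ll(\tfrac{v+w}{2}\Rr)$ on its left. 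This bundling is the cleanest way to reconcile the algebraic rearrangement above with the precise coefficient $\tfrac{\Lambda}{4}$ that \eqref{e.upperUC} demands.
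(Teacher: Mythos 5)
Your treatment of \eqref{e.lowerUC} is correct: writing $\phi=v-u$, $\psi=w-u$ with $u=u(\cdot,U,p,q)$, applying \eqref{e.secondvar} twice, using $\xi\cdot\a\xi\ge|\xi|^2$ and the inequality $\|X\|^2+\|Y\|^2\ge\tfrac12\|X-Y\|^2$ gives exactly the constant $\tfrac14$. The paper reaches the same bound by a slightly different packaging, namely the parallelogram identity $2\J\bigl(\tfrac{v_1+v_2}{2},U,p,q\bigr)-\J(v_1,U,p,q)-\J(v_2,U,p,q)=\tfrac14\innerply{\a(\nabla v_1-\nabla v_2)}{\nabla v_1-\nabla v_2}{U}$ with $v_1=v$, $v_2=w$, combined with the maximality $\J\bigl(\tfrac{v+w}{2},U,p,q\bigr)\le J(U,p,q)$; the two routes are essentially interchangeable.

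For \eqref{e.upperUC}, your identity $2\J(v,U,p,q)-\J(w,U,p,q)-J(U,p,q)=\tfrac12\innerply{\a\nabla\psi}{\nabla\psi}{U}-\innerply{\a\nabla\phi}{\nabla\phi}{U}$ and the absorption step are sound, but they produce the constant $\Lambda$, not $\tfrac{\Lambda}{4}$, and your proposed repair does not close this gap: the midpoint identity together with $J\ge\J\bigl(\tfrac{v+w}{2}\bigr)$ controls the \emph{symmetric} combination $2J-\J(v)-\J(w)$ (i.e.\ it reproves \eqref{e.lowerUC}), not the asymmetric quantity $2\J(v)-\J(w)-J$ appearing on the left of \eqref{e.upperUC}. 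The paper's own proof instead takes $v_1=w$, $v_2=2v-w$ in the parallelogram identity (so the midpoint is $v$) and uses $\J(2v-w,U,p,q)\le J(U,p,q)$; since $\nabla v_1-\nabla v_2=2(\nabla w-\nabla v)$, this likewise yields the constant $\Lambda$, exactly as your direct computation does. In fact $\tfrac{\Lambda}{4}$ cannot hold: taking $v=u(\cdot,U,p,q)$ and $\a\equiv\Lambda I_d$, the left side of \eqref{e.upperUC} equals $\tfrac12\innerply{\a\nabla(w-v)}{\nabla(w-v)}{U}=\tfrac{\Lambda}{2}\|\nabla v-\nabla w\|_{\underline{L}^2(U)}^2$, and the choice $w=2v-u$ shows the sharp constant is $\Lambda$. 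So the factor $\tfrac{\Lambda}{4}$ in the statement is a slip (immaterial for the later uses of the lemma); your argument with constant $\Lambda$ is the correct one, and you should not search for a sharper packaging.
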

\begin{proof}
For any $v_1,v_2 \in \A(U)$,
\begin{multline*}
2\J\left(\frac{v_1+v_2}2,U,p,q\right) - \J\left(v_1,U,p,q\right) - \J\left(v_2,U,p,q\right) 
\\ = \frac14 \innerply{\a (\nabla v_1 - \nabla v_2) }{\nabla v_1 - \nabla v_2}{U}.
\end{multline*}
Then~\eqref{e.lowerUC} follows by choosing $v_1 = v$ and $v_2 = w$ and using the maximality of $J(U,p,q)$, and~\eqref{e.upperUC} follows similarly by choosing $v_1 = w$ and $v_2 = 2v -w$. 
\end{proof}

\begin{lemma}
\label{l.Kidentities}
For every $p,q,p',q'\in \Rd$, 
\begin{equation} \label{e.Kident0}
J( U, p+p', q+q' ) - J(U,p,q) - J(U,p',q') 
= \innerply{-\a p' + q' }{\nabla u(\cdot,U,p,q)}{U}.
\end{equation}
\end{lemma}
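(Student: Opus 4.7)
The plan is to combine the two forms of the quadratic identity in \eqref{e.Jidentity} with the linearity of the maximizer recorded in \eqref{e.minimizerslinear}. Write $u := u(\cdot,U,p,q)$ and $u' := u(\cdot,U,p',q')$ so that, by \eqref{e.minimizerslinear}, $u(\cdot,U,p+p',q+q') = u + u'$. Using the first expression for $J$ in \eqref{e.Jidentity}, I would expand
\begin{equation*}
J(U,p+p',q+q') = \frac12 \innerply{\a(\nabla u + \nabla u')}{\nabla u + \nabla u'}{U}
\end{equation*}
into the two diagonal pieces $\tfrac12\innerply{\a\nabla u}{\nabla u}{U} = J(U,p,q)$ and $\tfrac12\innerply{\a\nabla u'}{\nabla u'}{U} = J(U,p',q')$, together with the cross-term $\innerply{\a\nabla u'}{\nabla u}{U}$; the two off-diagonal contributions collapse to one because $\a$ is symmetric.

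To identify that cross-term with the right-hand side of \eqref{e.Kident0}, I would apply the first variation \eqref{e.firstvar} with $(p,q)$ replaced by $(p',q')$, and test against $v = u \in \A(U)$. This yields
\begin{equation*}
\innerply{\a\nabla u'}{\nabla u}{U} = \innerply{-\a p' + q'}{\nabla u}{U},
\end{equation*}
and substituting back completes the identity.

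There is no real obstacle here: the proof reduces to a one-line polarization of the quadratic form in \eqref{e.Jidentity} and a single invocation of the Euler--Lagrange identity \eqref{e.firstvar}. The only point that requires a moment of thought is that the test function in \eqref{e.firstvar} can be chosen to be the maximizer associated with a \emph{different} pair of slopes, which is exactly what converts the symmetric bilinear pairing of gradients into the asymmetric expression on the right-hand side of \eqref{e.Kident0}.
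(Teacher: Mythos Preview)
Your proof is correct and uses essentially the same ingredients as the paper's argument: the linearity \eqref{e.minimizerslinear}, the energy identity \eqref{e.Jidentity}, and the first variation \eqref{e.firstvar}. Your route is in fact slightly more direct---you expand the quadratic form $J(U,p+p',q+q') = \tfrac12\langle \a\nabla(u+u'),\nabla(u+u')\rangle_U$ once and apply \eqref{e.firstvar} a single time to identify the cross term---whereas the paper computes the mixed pairing $\langle \a\nabla u, \nabla u(\cdot,U,p+p',q+q')\rangle_U$ in two different ways and equates them; both are variants of the same polarization idea.
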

\begin{proof}
By the first variation~\eqref{e.firstvar},
 \begin{align*}
 \innerply{ \a \nabla u(\cdot,U,p,q)}{\nabla u(\cdot,U,p+p',q+q')}{U}
& =   \innerply{ - \a(p+p')   + q+q'} {\nabla u(\cdot,U,p,q)}{U} \\
& = 2 J(U,p,q) + \innerply{ - \a p'   + q'} {\nabla u(\cdot,U,p,q)}{U}
\end{align*}
and, by~\eqref{e.minimizerslinear},
 \begin{align*}
\lefteqn{
\innerply{ \a \nabla u(\cdot,U,p,q)}{\nabla u(\cdot,U,p+p',q+q')}{U}
} \qquad & \\
& =   \innerply{ - \a p   + q} {\nabla u(\cdot,U,p+p',q+q')}{U} \\
& = 2 J(U,p+p',q+q') - \innerply{ - \a p'   + q'} {\nabla u(\cdot,U,p+p',q+q')}{U} \\
& = 2 J(U,p+p',q+q') - 2J(U,p',q') - \innerply{ - \a p'   + q'} {\nabla u(\cdot,U,p,q)}{U}.
\end{align*}
Combining the two displays above gives~\eqref{e.Kident0}. 
\end{proof}

We denote by $\nabla J(U,p,q)$ the gradient of $J(U,\cdot)$ at $(p,q)$. By~\eqref{e.Jquadratic}, this is a linear mapping from $\Rd\times\Rd\to\R$ which, in view of the previous lemma, can be expressed by
\begin{equation}
\label{e.Kidentity}
\nabla J(U,p,q)(p',q') = \innerply{-\a p' + q'}{\nabla u(\cdot,U,p,q)}{U}. 
\end{equation}
This relation between~$\nabla J(U,p,q)$ and the spatial averages of the gradient and flux of $u(\cdot,U,p,q)$ will play an important role in the proof of Proposition~\ref{p.bootstrap.cubes}. For simplicity, the maps $p' \mapsto \nabla J(U,p,q)(p',0)$ and $q'\mapsto \nabla J(U,p,q)(0,q')$ are sometimes denoted by $\nabla_pJ(U,p,q)$ and $\nabla_q J(U,p,q)$, respectively. 

\smallskip

Lemma~\ref{l.Kidentities} gives the identity
\begin{multline} \label{e.Jconv basic}
\frac12 J(U,p_1,q_1) + \frac12 J(U,p_2,q_2) -  J \left( U,\frac{p_1+p_2}{2}, \frac{q_1+q_2}{2} \right) \\ = \frac14 J(U,p_1-p_2,q_1-q_2)\,.
\end{multline}
This readily implies the following upper convexity estimate for $J$ in $(p,q)$ and the lower convexity estimates for $J$ in the variables $p$ and $q$ separately. 

\begin{lemma} \label{l.Jconvex}
For every $p,p_1,p_2,q,q_1,q_2\in \R^d$,
\begin{multline}
\label{e.Jupconvex}
\frac12 J(U,p_1,q_1) + \frac12 J(U,p_2,q_2) - J\left(U,\frac12(p_1+p_2) ,\frac12(q_1+q_2) \right) \\
\leq \Lambda \left( \left| p_1-p_2 \right|^2 +  \left| q_1  -  q_2  \right|^2 \right),
\end{multline}
\begin{equation} \label{e.Jconvexp}
\frac12 J(U,p_1,q) + \frac12 J(U,p_2,q) - J\left(U,\frac12(p_1+p_2) ,q \right) \geq \frac12 \left| p_1-p_2 \right|^2
\end{equation}
and 
\begin{equation} \label{e.Jconvexq}
\frac12 J(U,p,q_1) + \frac12 J(U,p,q_2) - J\left(U,p,\frac12(q_1+q_2)  \right) \geq \frac1{2\Lambda} \left| q_1-q_2 \right|^2.
\end{equation}
\end{lemma}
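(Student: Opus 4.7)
The plan is to reduce all three inequalities to the polarization identity \eqref{e.Jconv basic} together with three elementary pointwise estimates on $J$ itself. Since $J(U,\cdot,\cdot)$ is a quadratic form on $\R^d\times\R^d$ (by \eqref{e.Jquadratic} combined with the bilinearity made explicit in \eqref{e.Kidentity}), the identity \eqref{e.Jconv basic} is a standard polarization computation. When we set $q_1=q_2=q$ in that identity, its right-hand side collapses to $\tfrac14 J(U,p_1-p_2,0)$, and when we set $p_1=p_2=p$ it collapses to $\tfrac14 J(U,0,q_1-q_2)$. Each of the three claims in the lemma thus reduces to a pointwise upper or lower bound on $J(U,\cdot,\cdot)$ at one of these special arguments.

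For the upper bound \eqref{e.Jupconvex}, the task is to establish $J(U,p,q)\le C(\Lambda)(|p|^2+|q|^2)$. I would exploit the two expressions from \eqref{e.Jidentity}, namely $2J(U,p,q)=\innerply{\a\nabla u}{\nabla u}{U}=\innerply{-\a p+q}{\nabla u}{U}$ where $u=u(\cdot,U,p,q)$, and apply the Cauchy--Schwarz inequality weighted by $\a^{-1}$ on the first factor and $\a$ on the second. This gives $2J(U,p,q)\le \fint_U(-\a p+q)\cdot \a^{-1}(-\a p+q)\,dx$, which is bounded above by a constant multiple of $|p|^2+|q|^2$ after using ellipticity $I\le\a\le\Lambda I$ and Young's inequality.

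For the lower bound in $p$ \eqref{e.Jconvexp}, the reduction yields $J(U,p_1-p_2,0)=\nu(U,p_1-p_2)$, since $\mu(U,0)=0$ trivially. Then lower ellipticity $\a\ge I$ combined with Jensen's inequality applied to the constraint $\fint_U\nabla v\,dx=p$ (forced by the Dirichlet data $v\in\ell_p+H^1_0(U)$) gives
\[
\nu(U,p)\ge \tfrac12\fint_U|\nabla v|^2\,dx\ge \tfrac12\left|\fint_U\nabla v\,dx\right|^2=\tfrac12|p|^2.
\]

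For the lower bound in $q$ \eqref{e.Jconvexq}, the reduction yields $J(U,0,q_1-q_2)=-\mu(U,q_1-q_2)$, so I need a quantitative upper bound on $\mu(U,q)$. The natural move is to test the infimum defining $\mu(U,q)$ with the linear ansatz $u(x)=c\cdot x$ for a constant $c\in\R^d$, which yields $\mu(U,q)\le \tfrac12 c\cdot\bigl(\fint_U\a\bigr)c-q\cdot c$. Optimizing over $c$ (so $c=(\fint_U\a)^{-1}q$) gives the sharp linear competitor value $\mu(U,q)\le -\tfrac12 q\cdot\bigl(\fint_U\a\bigr)^{-1}q$, which by upper ellipticity $\a\le\Lambda I$ is at most $-\tfrac{1}{2\Lambda}|q|^2$. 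No step in the plan is a genuine obstacle: the real content lies in the structural identities already proved in Section~\ref{s.propertiesJ}, which convert the lemma into elementary pointwise estimates on $\nu$, $\mu$, and $J$.
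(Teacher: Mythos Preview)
Your proposal is correct and follows essentially the same route as the paper: reduce each inequality to the polarization identity \eqref{e.Jconv basic} and then invoke the pointwise bounds $J(U,p,0)=\nu(U,p)\ge\tfrac12|p|^2$, $J(U,0,q)=-\mu(U,q)\ge\tfrac{1}{2\Lambda}|q|^2$, and an upper bound on $J(U,p,q)$ coming from \eqref{e.Jidentity}. The paper simply cites these bounds, whereas you spell out their derivations (Jensen for $\nu$, a linear test function for $\mu$, and a weighted Cauchy--Schwarz for the upper bound), but the argument is the same.
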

\begin{proof}
We apply~\eqref{e.Jconv basic} together with~\eqref{e.Jidentity} to obtain an upper bound for $J(U,p_1-p_2,q_1-q_2)$, and rearrange to obtain~\eqref{e.Jupconvex}. We obtain~\eqref{e.Jconvexp} and~\eqref{e.Jconvexq} in a similar way, using the bounds 
\begin{equation*} \label{}
J(U,p,0) = \nu(U,p) \geq \frac12|p|^2 \quad \mbox{and} \quad J(U,0,q) = -\mu(U,q) \geq \frac1{2\Lambda}|q|^2.\qedhere
\end{equation*}
\end{proof}

\smallskip

By subadditivity, stationarity and \eqref{e.ahomdef}, for every $\cu \in \mcl C$, we have
\begin{equation}
\label{e.fromabove-nu}
\frac 1 2 p \cdot \ahom p \le \E[\nu(\cu,p)].
\end{equation}
By \eqref{e.defJ}, this implies
\begin{equation}
\label{e.fromabove-mu}
\frac 1 2 q \cdot \ahom^{-1} q \le -\E[\mu(\cu,q)],
\end{equation}
so that
\begin{equation}
\label{e.fromabove-J}
\E[J(\cu,p,q)] \ge \frac 1 2 p \cdot \ahom p + \frac 1 2 q \cdot \ahom^{-1} q - p\cdot q.
\end{equation}

\smallskip

Due to Lemma~\ref{l.Jconvex}, there exists a unique, deterministic matrix $Q=Q(U)$ such that, for every $p\in\Rd$,
\begin{equation*}
p = \nabla_q \E \left[  J(U,0,Qp) \right] = \E \left[  \nabla_q J(U,0,Qp) \right]. 
\end{equation*}
Indeed, for each fixed $p\in\Rd$, we can consider the minimum of the uniformly convex, quadratic function
\begin{equation*}
q\mapsto \E \left[ J(U,0,q) \right] - p\cdot q. 
\end{equation*}
This defines a linear map from $p$ to the minimum point $q(p)$. The matrix~$Q$ is defined by $q = Qp$. It is easy to check that $Q$ is symmetric, and the upper and lower uniform convexity of $J(U,0,\cdot)$ ensures that $Q$ is positive and in particular invertible. In fact, we have
\begin{equation} 
\label{e.Q-unif}
I_d \leq Q \leq \Lambda I_d. 
\end{equation}
Notice that, for every $\cu \in\mcl C$, we have 
\begin{equation} \label{e.Qbndsa}
Q(\cu) \leq \ahom.
\end{equation}
Indeed, by \eqref{e.fromabove-J},
\begin{align*}
\frac 12 p\cdot Q(\cu) p 
& = \frac12 Q(\cu) p \cdot \E \left[  \nabla_q J(U,0,Q(\cu)p) \right] \\
& = \E \left[ J(U,0,Q(\cu)p) \right] \\
& \geq \frac 12 Q(\cu)p \cdot \ahom^{-1} Q(\cu)p.
\end{align*}
Putting $p:= Q(\cu)^{-1}q$ gives $Q(\cu)^{-1} \geq \ahom^{-1}$, which is equivalent to the claim. For future reference, we notice that since $u(\cdot,U,p,Qp) = u(\cdot,U,p,0) + u(\cdot,U,0,Qp)$ and $-u(\cdot,U,p,0)$ is the minimizer of $\nu(U,p)$, we have that $-u(\cdot,U,p,0) \in \ell_{-p}+H^1_0(U)$ and
\begin{equation*} \label{}
\fint_U \nabla u(x,U,p,0) \,dx = -p
\end{equation*}
and thus by~\eqref{e.Kidentity} that 
\begin{align*} \label{}
\nabla_q \E \left[ J(U,p,Qp) \right] 
& = \nabla_q \E \left[ J(U,0,Qp) \right] +  \nabla_q \E \left[ J(U,p,0) \right] = p - p =0.
\end{align*}

Similarly, we denote by $P=P(U)$ the deterministic, symmetric matrix defined via the relation
\begin{equation*} \label{}
q = \nabla_p\E\left[ J(U,Pq,0) \right].
\end{equation*}
As above, we have the estimates
\begin{equation*} \label{}
\frac1\Lambda I_d \leq P(U) \leq I_d,
\end{equation*}
for every $\cu \in \mathcal C$,
\begin{equation*} \label{}
P(\cu) \leq \ahom^{-1} 
\end{equation*}
and 
\begin{equation} 
\label{e.lookoverhere}
\nabla_q \E \left[ J(U,Pq,q) \right] 
 =0.
\end{equation}

\smallskip 

We next give an estimate for the difference between~$Q(\cu)$ and the homogenized coefficients~$\ahom$ in terms of the expected size of $J(\cu,p,Q(\cu)p)$. 

\begin{lemma}
\label{l.fluxmaps}
There exists $C(d,\Lambda) < \infty$ such that, for any $\cu\in\mcl C$,\begin{equation*}
\left| \ahom - Q(\cu) \right| + \left| \ahom^{-1} - P(\cu)\right| \leq C \sup_{p\in\Rd} |p|^{-2}\, \E\left[ J(\cu,p,Q(\cu)p) \right].
\end{equation*}
\end{lemma}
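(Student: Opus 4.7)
The plan is to exploit that $\E[\nu(\cu,\cdot)]$ and $-\E[\mu(\cu,\cdot)]$ are non-negative quadratic forms whose Hessians are precisely $P(\cu)^{-1}$ and $Q(\cu)^{-1}$, which will reduce the claim to elementary linear algebra on symmetric PSD matrices.

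First, since $\nu(\cu,\cdot)$ and $-\mu(\cu,\cdot)$ are non-negative quadratic, I write $\E[\nu(\cu,p)] = \tfrac12 p \cdot A_\nu p$ and $-\E[\mu(\cu,q)] = \tfrac12 q \cdot A_\mu q$ for symmetric positive matrices $A_\nu,A_\mu$. Using $J(\cu,p,0) = \nu(\cu,p)$ and $J(\cu,0,q) = -\mu(\cu,q)$, the defining relations $p = \nabla_q \E[J(\cu,0,Q(\cu)p)]$ and $q = \nabla_p \E[J(\cu,P(\cu)q,0)]$ become $p = A_\mu Q(\cu)p$ and $q = A_\nu P(\cu)q$ for all $p,q$, so
\[
A_\nu = P(\cu)^{-1}, \qquad A_\mu = Q(\cu)^{-1}.
\]
Substituting into $\E[J(\cu,p,q)] = \E[\nu(\cu,p)] - \E[\mu(\cu,q)] - p\cdot q$ and setting $q = Q(\cu)p$ then gives, by symmetry of $Q(\cu)$,
\[
\E[J(\cu,p,Q(\cu)p)] = \tfrac12 p \cdot \bigl(P(\cu)^{-1} - Q(\cu)\bigr) p.
\]
Since the left side is non-negative, $P(\cu)^{-1} - Q(\cu)$ is PSD, and consequently
\[
\sup_{p\in\Rd} |p|^{-2}\,\E[J(\cu,p,Q(\cu)p)] = \tfrac12 \bigl| P(\cu)^{-1} - Q(\cu) \bigr|.
\]

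It remains to bound $|\ahom - Q(\cu)|$ and $|\ahom^{-1} - P(\cu)|$ by a constant multiple of $|P(\cu)^{-1} - Q(\cu)|$. From \eqref{e.fromabove-nu}--\eqref{e.fromabove-mu} and the identifications of Step~1, I read off $\ahom \le P(\cu)^{-1}$ and $\ahom^{-1} \le Q(\cu)^{-1}$, that is, $Q(\cu) \le \ahom \le P(\cu)^{-1}$, so
\[
0 \le \ahom - Q(\cu) \le P(\cu)^{-1} - Q(\cu),
\]
and the monotonicity of the operator norm on the PSD cone gives $|\ahom - Q(\cu)| \le |P(\cu)^{-1} - Q(\cu)|$. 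For the second term I use the algebraic identity
\[
\ahom^{-1} - P(\cu) = \ahom^{-1}\bigl(P(\cu)^{-1} - \ahom\bigr) P(\cu)
\]
together with the ellipticity bounds $|\ahom^{-1}| \le 1$ and $|P(\cu)| \le 1$ (both consequences of $\ahom \ge I$ and $P(\cu) \le \ahom^{-1}$) to conclude
\[
|\ahom^{-1} - P(\cu)| \le \bigl|P(\cu)^{-1} - \ahom\bigr| \le \bigl|P(\cu)^{-1} - Q(\cu)\bigr|.
\]
Summing the two bounds will yield the claim with $C = 4$. There is no genuine obstacle; the only step requiring a moment of thought is the clean identification $A_\nu = P(\cu)^{-1}$, $A_\mu = Q(\cu)^{-1}$, after which the proof is pure linear algebra on symmetric PSD matrices.
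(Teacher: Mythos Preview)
Your proof is correct and, in fact, cleaner than the paper's. The key difference is that you identify the Hessians of $\E[\nu(\cu,\cdot)]$ and $-\E[\mu(\cu,\cdot)]$ explicitly as $P(\cu)^{-1}$ and $Q(\cu)^{-1}$, which immediately yields the exact formula $\E[J(\cu,p,Q(\cu)p)] = \tfrac12 p\cdot(P(\cu)^{-1}-Q(\cu))p$ and reduces everything to the matrix sandwich $Q(\cu)\le\ahom\le P(\cu)^{-1}$. The paper instead argues indirectly: it bounds $\tfrac12 p\cdot\ahom p + \tfrac12 Qp\cdot\ahom^{-1}Qp - p\cdot Qp$ by $\eta|p|^2$ via~\eqref{e.fromabove-J}, invokes uniform convexity of $q\mapsto \tfrac12 q\cdot\ahom^{-1}q$ to deduce $|Qp-\ahom p|^2\le C\eta|p|^2$, transfers this to $\E[J(p,\ahom p)]\le C\eta|p|^2$ by the upper convexity of $q\mapsto\E[J(p,q)]$, and then bounds $|\E[J(p,0)]-\tfrac12 p\cdot\ahom p|$ and $|\E[J(0,q)]-\tfrac12 q\cdot\ahom^{-1}q|$ separately before closing. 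Your route avoids this chain, produces an explicit constant $C=4$, and makes transparent that the entire content of the lemma is the pair of one-sided bounds $\ahom\le P^{-1}$ and $\ahom^{-1}\le Q^{-1}$ coming from~\eqref{e.fromabove-nu}--\eqref{e.fromabove-mu}. The paper's argument, on the other hand, does not require first recognizing that $P$ and $Q$ are exactly the inverse Hessians, which is perhaps why it was written that way.
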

\begin{proof}
We prove only the estimate for $\left| \ahom - Q(\cu) \right|$. The argument for the bound on $\left| \ahom^{-1} - P(\cu)\right|$ is similar. We drop the dependence on $\cu$ and write $Q=Q(\cu)$ and $J(p,q)= J(\cu,p,q)$. Set 
\begin{equation*} \label{}
\eta := \sup_{p\in\Rd} |p|^{-2}\, \E\left[ J(p,Qp) \right].
\end{equation*}
By~\eqref{e.fromabove-J}, we have, for every $p\in\Rd$,
\begin{equation*}
0 
 \leq \frac12 p\cdot \ahom p + \frac12 Qp \cdot \ahom^{-1} Qp - p\cdot Qp  
 \leq \E \left[ J(p,Qp) \right] 
 \leq \eta |p|^2.
\end{equation*}
It follows by uniform convexity that, for every $p\in\Rd$, 
\begin{equation*}
\left| Qp - \ahom p \right|^2 \leq C\eta |p|^2
\end{equation*}
and thus, by upper uniform convexity of $q\mapsto \E \left[ J(p,q) \right]$ and the fact that this map achieves its minimum at $q=Qp$, we deduce 
\begin{equation*}
\E\left[ J(p,\ahom p) \right] \leq C\eta|p|^2. 
\end{equation*}
Using~\eqref{e.modidentity}, \eqref{e.fromabove-nu} and~\eqref{e.fromabove-mu}, we get 
\begin{equation*}
 p\cdot \ahom p \leq \E\left[ J(p,0) \right] + \E\left[ J(0,\ahom p) \right] \leq  p\cdot \ahom p + C\eta|p|^2. 
\end{equation*}
In view of~\eqref{e.fromabove-nu} and~\eqref{e.fromabove-mu}, this implies that, for every $p,q\in\Rd$, 
\begin{equation*}
\left| \E\left[ J(p,0) \right] - \frac12p\cdot \ahom p \right|  \leq C\eta|p|^2 
\quad \mbox{and} \quad
\left| \E \left[ J(0,q)\right] - \frac12 q \cdot \ahom^{-1} q \right| \leq C\eta|q|^2.
\end{equation*}
From this, the definition of $Q$ and the fact that $J$ is quadratic, we get 
\begin{equation*}
\frac 12 p\cdot Qp 
= \frac12 Qp \cdot \E \left[  \nabla_q J(0,Qp) \right]
 = \E \left[ J(0,Qp) \right] \leq \frac 12 Qp \cdot \ahom^{-1} Qp + C\eta|p|^2.
\end{equation*}
Thus $Q\ahom^{-1} Q \geq Q - C\eta I_d$, that is, $Q(\ahom^{-1} - Q^{-1})Q \ge -C\eta I_d$. By \eqref{e.Q-unif}, this implies $Q \geq \ahom - C\eta I_d$. In view of~\eqref{e.Qbndsa}, the proof is now complete. 
\end{proof}

\section{The bootstrap argument}
\label{s.boostrap.cubes}

This section is devoted to the proof of the following estimate for $J$. 
\begin{proposition}
\label{p.bootstrap.cubes}
For every $\alpha\in \left( 0,\frac1d\right)$, there exist $C(d,\Lambda,\alpha) < \infty$ such that, for every $p\in\Rd$ and $\cu \in \mcl C$,
\begin{equation} \label{e.cubesavage}
\E \left[ J\left( \cu,p,\ahom p \right)  \right]  \leq C|p|^2 \, |\cu|^{-\al}.
\end{equation}
\end{proposition}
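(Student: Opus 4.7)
The plan is to establish~\eqref{e.cubesavage} by induction on the exponent $\alpha$. A baseline estimate with some universal exponent $\alpha_0 = \alpha_0(d,\Lambda) > 0$ is available from Proposition~\ref{p.quenchedEE} together with the standard subadditive energy comparison as in~\cite{AS}. The induction step aims to show that, if~\eqref{e.cubesavage} holds with exponent $\alpha \in [\alpha_0, 1/d)$, then it also holds with exponent $\alpha + \delta$ for some $\delta > 0$ bounded below on compact subintervals of $[\alpha_0, 1/d)$; iterating finitely many times then reaches any target exponent below $1/d$.

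For the induction step, fix $\cu_R \in \mcl C$ and $p \in \R^d$, and let $u = u(\cdot, \cu_R, p, \ahom p)$ be the maximizer appearing in~\eqref{e.Jidentity}. Partition the interior of $\cu_R$ into sub-cubes $\{\cu_r(y_i)\}_i$ of mesoscopic side length $r = R^{1-\theta}$ for a small $\theta > 0$ to be tuned, setting aside the outermost layer of sub-cubes (a boundary strip of relative volume $O(R^{-1})$, compatible with the target exponent $\alpha<1/d$). Apply Theorem~\ref{t.mesoregularity} with $k = 1$ on each interior sub-cube to produce an affine approximation of $u$ on $\cu_r(y_i)$ with slope $p_i \approx (\nabla u)_{\cu_r(y_i)}$, with relative approximation error of order $(r/R)^2$. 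This effectively localizes the problem to the sub-cubes: the inductive hypothesis applied on each $\cu_r(y_i)$ gives $\E[J(\cu_r(y_i), p_i, \ahom p_i)] \le C\, \E[|p_i|^2]\, r^{-d\alpha}$, and by the finite range of dependence (P2) combined with stationarity the slopes $\{p_i\}$ are $\P$-independent across well-separated sub-cubes, so their spatial average concentrates around its mean. This mean, and hence $\E[|p_i|^2]$, is controlled by $|p|^2$ (plus lower-order fluctuations) thanks to the affine part of the boundary data implicit in the decomposition $u(\cdot,U,p,q) = u(\cdot,U,p,0) + u(\cdot,U,0,q)$ together with the identity $\fint_U \nabla u(\cdot, U,p, 0) = -p$.

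Combining these ingredients via the second variation identity~\eqref{e.secondvar}---which recasts the difference between $J(\cu_R, p, \ahom p)$ and the average over sub-cubes of $J(\cu_r(y_i), p_i, \ahom p_i)$ as an $L^2$ energy of the regularity correction---together with Lemma~\ref{l.fluxmaps} (used to absorb the mismatch between $\ahom$ and the finite-volume matrix $Q(\cu_r)$ at the sub-cube scale) yields, after optimization in $\theta$, an estimate of the form $|p|^2 \bigl( R^{-d\alpha(1-\theta)} + R^{-2\theta\delta_0} \bigr)$ for a fixed $\delta_0(d,\Lambda) > 0$; this is a strict improvement over $|p|^2 R^{-d\alpha}$ whenever $d\alpha < 1$. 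The main obstacle I anticipate is the careful bookkeeping of the regularity and variance errors across the scale hierarchy, particularly the boundary-layer contribution: the threshold $\alpha = 1/d$ is exactly the rate at which the $O(1)$-thickness boundary strip around $\partial \cu_R$ enters the energy, which halts the bootstrap and accounts for the sharpness of~\eqref{e.cubesavage} discussed after Theorem~\ref{t.conv}.
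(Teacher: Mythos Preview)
Your overall architecture---induction on the exponent, boundary-layer removal, mesoscopic localization via the regularity theory, and the identification of the boundary layer as the obstruction at $\alpha=1/d$---matches the paper's. But the core mechanism you invoke for the improvement is wrong, and the argument as written cannot close.

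The claim that ``the slopes $\{p_i\}$ are $\P$-independent across well-separated sub-cubes'' is false. The slopes $p_i \approx (\nabla u)_{\cu_r(y_i)}$ are local averages of the gradient of the \emph{global} maximizer $u = u(\cdot,\cu_R,p,\ahom p)$, which depends on the coefficients in all of $\cu_R$; there is no reason for these to decorrelate, and in fact the whole point of the higher regularity is that the $p_i$ vary \emph{slowly} in $i$, i.e.\ are strongly correlated. Consequently the step ``so their spatial average concentrates around its mean'' has no basis, and neither does the application of the induction hypothesis in the form $\E[J(\cu_r(y_i),p_i,\ahom p_i)] \le C\,\E[|p_i|^2]\,r^{-d\alpha}$, since $p_i$ and the energy are correlated.

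The paper's mechanism is different. One introduces, in each small cube $\cu_r(y)$, a \emph{local} solution $v_y$ (the Dirichlet solution with boundary data given by the polynomial approximation $w_{[y]}$), and rewrites $\J(v_y,\cu_r(y),p,q)$ via the first variation as $\langle \a\nabla \tilde u_y,\nabla w_{[y]}\rangle_{\cu_r(y)}$, where $\tilde u_y = u(\cdot,\cu_r(y),p,q)$ is $\F(\cu_r(y))$-measurable. The improvement then comes from a covariance estimate: the \emph{fluxes} $\fint_{\cu_r(y)}\a\nabla \tilde u_y$ are essentially independent in $y$, while the polynomial slopes $\nabla w_{[y]}$ are essentially constant over cubes of an intermediate scale $s$ with $r\ll s\ll R$. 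This CLT-type cancellation (Lemma~\ref{l.bigCLTstep} in the paper) yields the gain. Two further ingredients you are missing: (i) one works with $q=Q(\cu_R)p$ rather than $\ahom p$ so that $\E[\fint_{\cu_R}\nabla v]=0$, which makes the expected flux small in each mesocube; and (ii) the regularity theorem must be applied with a \emph{large} degree $k$ (depending on $d,\Lambda$), not $k=1$, because one needs the polynomial approximation error on scale $s$ to beat several competing powers of $R$; three nested mesoscales $r\ll s\ll l$ are used rather than one.
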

An immediate consequence of Proposition~\ref{p.bootstrap.cubes} is
\begin{corollary}
\label{c.bootstrap}
For every $\alpha\in \left( 0,\frac1d\right)$, there exist $C(d,\Lambda,\alpha) < \infty$ such that, for every $p, q \in\Rd$ and $\cu \in \mcl C$,
\begin{equation} \label{e.statbiasests}
\left\{ 
\begin{aligned}
& \left| \E \left[ \nu(\cu,p) \right] - \frac12 p\cdot \ahom p \right| \leq C |p|^2 \, |\cu|^{-\al}, \qquad \mbox{and} \\
& \left|\E \left[ \mu(\cu,q) \right] + \frac12 q\cdot \ahom^{-1}q  \right|
\leq C|q|^2 \, |\cu|^{-\al}.
\end{aligned}
\right. 
\end{equation}
\end{corollary}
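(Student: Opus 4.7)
The plan is to read off Corollary~\ref{c.bootstrap} directly from Proposition~\ref{p.bootstrap.cubes} by exploiting the identity in Lemma~\ref{l.modulation} together with the \emph{one-sided} bounds \eqref{e.fromabove-nu} and \eqref{e.fromabove-mu}. The key observation is that $\E[J(\cu,p,\ahom p)]$ is automatically a sum of two nonnegative quantities, each of which is exactly the object we want to control, so no further work beyond Proposition~\ref{p.bootstrap.cubes} is needed. I expect no serious obstacle; the only point requiring a moment of thought is the passage from the case $q=\ahom p$ to general $q\in\Rd$, which is handled by a linear change of variables and uniform ellipticity.

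First, by~\eqref{e.modidentity},
\begin{equation*}
\E[J(\cu,p,\ahom p)] = \E[\nu(\cu,p)] - \E[\mu(\cu,\ahom p)] - p\cdot \ahom p.
\end{equation*}
Split this as $A+B$ with
\begin{equation*}
A := \E[\nu(\cu,p)] - \tfrac12 p\cdot \ahom p, \qquad B := -\E[\mu(\cu,\ahom p)] - \tfrac12 p\cdot \ahom p.
\end{equation*}
By subadditivity and stationarity, \eqref{e.fromabove-nu} gives $A\geq 0$. For $B$, set $q=\ahom p$ in~\eqref{e.fromabove-mu}; using the symmetry of $\ahom$ we get $\tfrac12 q\cdot \ahom^{-1} q = \tfrac12 p\cdot \ahom p$, so $B\geq 0$ as well. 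Since both $A$ and $B$ are nonnegative and $A+B=\E[J(\cu,p,\ahom p)]$, Proposition~\ref{p.bootstrap.cubes} yields
\begin{equation*}
0 \leq A \leq C|p|^2|\cu|^{-\alpha}, \qquad 0 \leq B \leq C|p|^2|\cu|^{-\alpha}.
\end{equation*}
The first inequality is exactly the first line of~\eqref{e.statbiasests}.

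To obtain the second line of~\eqref{e.statbiasests} from the bound on $B$, substitute $p=\ahom^{-1}q$. The symmetry of $\ahom$ gives $\tfrac12 p\cdot \ahom p = \tfrac12 q\cdot \ahom^{-1} q$, so the bound on $B$ becomes
\begin{equation*}
\left| \E[\mu(\cu,q)] + \tfrac12 q\cdot \ahom^{-1} q \right| \leq C |\ahom^{-1} q|^2 \, |\cu|^{-\alpha}.
\end{equation*}
Since~\eqref{e.ue} implies that $\ahom$ has eigenvalues in $[1,\Lambda]$, we have $|\ahom^{-1}q|\leq |q|$, and absorbing the resulting constant into $C=C(d,\Lambda,\alpha)$ completes the proof of~\eqref{e.statbiasests}.
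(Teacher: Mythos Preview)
Your proof is correct and follows essentially the same approach as the paper: both use Lemma~\ref{l.modulation} to write $\E[J(\cu,p,\ahom p)]$ as the sum of the two nonnegative quantities $\E[\nu(\cu,p)]-\tfrac12 p\cdot\ahom p$ and $-\E[\mu(\cu,\ahom p)]-\tfrac12 p\cdot\ahom p$ (nonnegativity coming from \eqref{e.fromabove-nu} and \eqref{e.fromabove-mu}), and then bound each by $\E[J(\cu,p,\ahom p)]$ via Proposition~\ref{p.bootstrap.cubes}. You have simply spelled out in full the step the paper compresses into ``combining this with \eqref{e.fromabove-nu} and \eqref{e.fromabove-mu} yields the result,'' including the substitution $p=\ahom^{-1}q$ and the ellipticity bound $|\ahom^{-1}q|\le|q|$ needed to pass to general $q$.
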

\begin{proof}[Proof of Corollary~\ref{c.bootstrap}]
By \eqref{e.modidentity} and Proposition~\ref{p.bootstrap.cubes}, 
\begin{equation} \label{e.cubesavgs2}
\E \left[ \nu(\cu,p) \right] - \E \left[ \mu(\cu,\ahom p) \right] - p\cdot \ahom p \leq C |p|^2 \, |\cu|^{-\al}.
\end{equation}
Combining this with \eqref{e.fromabove-nu} and \eqref{e.fromabove-mu} yields the result.
\end{proof}

\smallskip

We begin the proof of Proposition~\ref{p.bootstrap.cubes} with some reductions. First, observe that it suffices to demonstrate~\eqref{e.cubesavage} for cubes of the form~$\cu_R$, $R \ge 1$, since we can then apply the result to translations of the law $\P$ (recall that we only assume $\Z^d$-stationarity). Second, in view of Lemma~\ref{l.fluxmaps}, in order to prove~\eqref{e.cubesavage}, it suffices to show that, for each $\alpha < \frac1d$, there exists $C(d,\Lambda,\alpha)<\infty$ such that, for every $R\geq 1$ and $p\in\Rd$, 
\begin{equation}
\label{e.Qsavage}
\E \left[ J(\cu_R,p,Q(\cu_R)p) \right] \leq C|p|^2R^{-d\alpha}. 
\end{equation}

\smallskip

The proof of~\eqref{e.Qsavage} is by induction. For each $\alpha\in \left(0,1\right)$ and $K\geq 1$, we let $\mathcal{S}(\alpha,K)$ be the assertion that, for every $R\geq 1$ and $p\in\Rd$, 
\begin{equation*}
\E \left[ J(\cu_R,p,Q(\cu_R)p) \right] \leq K|p|^2 R^{-d\alpha}.
\end{equation*}
In view of Lemma~\ref{l.modulation}, the base case of our bootstrap was proved in~\cite{AS}. 

\begin{proposition}[{\cite[Theorem 3.1]{AS}}]
\label{p.thebasecase}
There exists $\alpha_0(d,\Lambda)>0$ and $K_0(d,\Lambda) < \infty$ such that 
\begin{equation}
\label{e.thebasecase}
\mathcal{S}(\alpha_0,K_0) \quad \mbox{holds.}
\end{equation}
\end{proposition}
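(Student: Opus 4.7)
The plan is to follow~\cite[Theorem~3.1]{AS}. Since $q = Q(\cu_R)p$ minimizes the convex map $q \mapsto \E[J(\cu_R, p, q)]$ (by the definition of $Q(\cu_R)$), I have $\E[J(\cu_R, p, Q(\cu_R)p)] \le \E[J(\cu_R, p, \ahom p)]$, so by \eqref{e.modidentity} it suffices to show that, for every $R \geq 1$ and $p \in \Rd$,
\[
\bigl( F(R) - \tfrac12 p \cdot \ahom p \bigr) + \bigl( G(R) - \tfrac12 p \cdot \ahom p \bigr) \leq K_0 |p|^2 R^{-d\alpha_0},
\]
where $F(R) := \E[\nu(\cu_R, p)]$ and $G(R) := -\E[\mu(\cu_R, \ahom p)]$. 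By (P1) and \eqref{e.subadd}, both $F$ and $G$ are non-increasing along the scales $\{3^n\}_{n \in \N}$, and \eqref{e.ahomdef} together with the obvious variant for $\mu$ forces the common limit $\tfrac12 p \cdot \ahom p$; both parenthesized quantities above are thus nonnegative and vanish as $R \to \infty$, and the task reduces to producing a quantitative rate for this convergence.

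The mechanism I would use is a quantified subadditivity deficit combined with independence. Tiling $\cu_{3R}$ by $3^d$ slightly shrunken translates $\cu_R^{(i)}$ of $\cu_R$ separated by unit-width buffers, and extending each sub-cube minimizer by the affine map $\ell_p$ on the buffer region, a direct variational test gives
\[
\nu(\cu_{3R}, p) \leq 3^{-d} \sum_{i=1}^{3^d} \nu(\cu_R^{(i)}, p) + C |p|^2 R^{-1}.
\]
The uniform convexity of the Dirichlet energy (in the spirit of Lemma~\ref{l.Jconvexity}) upgrades this to
\[
\nu(\cu_{3R}, p) + c\, 3^{-d} \sum_i \bigl\| \nabla v_R^{(i)} - \nabla v_{3R} \bigr\|_{\underline{L}^2(\cu_R^{(i)})}^2 \leq 3^{-d} \sum_i \nu(\cu_R^{(i)}, p) + C |p|^2 R^{-1},
\]
where $v_R^{(i)}$ and $v_{3R}$ are the $\nu$-minimizers on the respective cubes. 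Taking expectations converts the deterministic deficit $F(R) - F(3R)$ into an upper bound for the expected $L^2$-distance between the ``glued'' and global minimizer gradients. By (P2), the variables $\{\nu(\cu_R^{(i)}, p)\}_i$ are i.i.d.\ and bounded by $\tfrac\Lambda2 |p|^2$, so the empirical average $3^{-d} \sum_i \nu(\cu_R^{(i)}, p)$ has variance at most $C |p|^4 \, 3^{-d}$. A dual Neumann-type tiling supplies the analogous inequalities for $G$.

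Combining these two ingredients with the modulated-energy identity \eqref{e.modidentity}, I would iterate along the scales $R = 3^n$ to derive an inequality of the schematic form
\[
\bigl( F(3R) - \tfrac12 p \cdot \ahom p \bigr) \leq (1-c)\bigl( F(R) - \tfrac12 p \cdot \ahom p \bigr) + C |p|^2 R^{-d/2}
\]
(and analogously for $G$), which yields an algebraic rate with some typically very small exponent $\alpha_0 > 0$ on the subsequence $R = 3^n$; an interpolation based on monotonicity and boundedness then extends the bound to all $R \geq 1$. The main obstacle is that monotonicity alone is non-quantitative and independence controls only the fluctuations, not the bias: it is the uniform convexity of the Dirichlet energy that closes the loop, by linking the subadditive deficit $F(R) - F(3R)$ (a priori only known to tend to $0$) to the very bias $F(R) - \tfrac12 p \cdot \ahom p$ one seeks to estimate. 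The resulting $\alpha_0$ is typically minute, and the whole point of Sections~\ref{s.boostrap.cubes}--\ref{s.magic} is to amplify it up to any exponent strictly less than $1/d$.
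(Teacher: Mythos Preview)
The paper does not prove this proposition: it is quoted verbatim from \cite[Theorem~3.1]{AS}, and the only justification offered here is the one-sentence summary immediately following the statement, namely that the minimizers of $\mu(\cu_n,q)$ are \emph{flat}---close to a deterministic affine function---and that this flatness permits a direct comparison with $\nu(\cu_n,p)$ for the appropriate slope $p$. Your reduction to $\E[J(\cu_R,p,\ahom p)]$ via the minimality of $Q(\cu_R)$ is exactly the one made in the paper.

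Your sketch correctly lists the main ingredients of the argument in \cite{AS} (subadditive deficit, uniform convexity linking the deficit to $L^2$ closeness of minimizers across scales, independence controlling fluctuations), but it is too quick at the decisive step. What you have established is that the deficit $\tau_n := (F(3^n)-F(3^{n+1})) + (G(3^n)-G(3^{n+1}))$ controls the $L^2$ distance between minimizers on adjacent scales, and that the sub-cube energies have small variance. What you have \emph{not} explained is how to obtain a bound of the form $\omega_n \leq C(\tau_n + 3^{-cn})$ for the bias $\omega_n := F(3^n)+G(3^n)-p\cdot\ahom p$, which is what actually yields your schematic contraction (since $\tau_n = \omega_{n-1}-\omega_n$). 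The deficit in $\nu$ alone says nothing about $\mu$, and the variance of the \emph{energies} $\nu(\cu_R^{(i)},p)$ does not by itself touch the bias. The bridge is precisely the flatness argument the paper singles out: one shows that the $\mu$-minimizer is $L^2$-close to an affine function whose (a priori random) slope is nearly deterministic---this is where independence enters, applied to the spatially averaged \emph{gradients} over sub-cubes rather than to the energies---and then that affine function is used as a competitor in the $\nu$ problem. Your phrase ``combining these two ingredients with the modulated-energy identity'' hides exactly this step, which is the heart of \cite[Section~3]{AS} and is not a formality.
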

The previous proposition was proved in~\cite[Section 3]{AS} by showing that minimizers of $\mu(\cu_n,q)$ for large $n$ are expected to be flat-- that is, close to a deterministic affine function-- and this allows comparison to $\nu(\cu_n,p)$ for an appropriate $p$ (the slope of the affine function). Note that the result was proved with $\ahom$ in place of $Q(\cu_R)$, but since $Q(\cu_R)$ is the minimum of the map $q\mapsto\E\left[ J(\cu_R,p,q)\right]$, we can make this replacement. 

\smallskip

In view of Proposition~\ref{p.thebasecase}, it therefore suffices to show that there exist $\ep(d,\Lambda)>0$ and $C(d,\Lambda) < \infty$ such that, for every $\alpha \in \left[ \alpha_0,\frac1d \right)$ and $K\geq1$,
\begin{equation}
\label{e.indystepcubes}
\mathcal{S}(\alpha,K) \implies \mathcal{S}\left(\alpha+\ep(1-d\alpha),CK^{\frac32}\right).
\end{equation}
Indeed, an iteration of~\eqref{e.indystepcubes}, starting from $\mathcal{S}(\alpha_0,K_0)$, yields~\eqref{e.Qsavage}. 

\smallskip

The rest of this section is focused on the proof of~\eqref{e.indystepcubes}. Throughout, we fix $\alpha \in \left[ \alpha_0,\frac1d\right)$ and $K\geq 1$, and assume that $\mathcal{S}(\alpha,K)$ holds. We fix $R\geq 1$ and $p\in\Rd$. By homogeneity, we may assume that $|p|\leq 1$. We set 
\begin{equation} \label{e.defq}
q:= Q(\cu_R)p,
\end{equation}
where we recall that $Q(\cu_R)$ is defined before Lemma~\ref{l.fluxmaps}. Throughout, 
\begin{equation*}
v:= u\left(\cdot,\cu_R,p,q\right)
\end{equation*}
denotes the maximizer of $J(\cu_R,p,q)$. Notice that Lemma~\ref{l.Jconvexity} gives
\begin{equation} \label{e.Du2}
\left\| \nabla v \right\|_{\underline{L}^2(\cu_R)}^2 \leq 4 J\left( \cu_R, p, q \right). 
\end{equation}
In particular, the induction hypothesis yields
\begin{equation} \label{e.EDu2}
\E \left[ \left\| \nabla v \right\|_{\underline{L}^2(\cu_R)}^2 \right] \leq 4KR^{-d\alpha}.
\end{equation}

\subsection{Rough sketch of the argument for~\texorpdfstring{\eqref{e.indystepcubes}}{(4.5)}}
\label{ss.sketch}
Before giving the complete details, we present an informal summary of the proof of~\eqref{e.indystepcubes}. Choose a mesoscopic scale $r\in (1,R)$ and partition the cube $\cu_R$ into smaller cubes of the form $\cu_r(y)$, $y\in\cu_R$. Observe that, using~\eqref{e.firstvar} in the last step,
\begin{align*}
J(\cu_R,p,q) & = \Ll( \frac R r  \Rr)^{-d} \sum_y \J(v,\cu_r(y),p,q) \\
	& \le \Ll( \frac R r  \Rr)^{-d} \sum_y \innerply{-\a p + q}{\nabla v}{\cu_r(y)} \\
	& = \Ll( \frac R r  \Rr)^{-d} \sum_y \innerply{\a \nabla \tilde u_y}{\nabla v}{\cu_r(y)} ,
\end{align*}
where $\tilde u_y := u(\cu_r(y),p,q)$. If the induction hypothesis $\mcl S(\al,K)$ holds, then, using also~\eqref{e.EDu2} and the Lipschitz estimate, we have 
\begin{equation} \label{e.sketchbnds}
\E[\|\nabla \tilde u_y\|_{\underline{L}^2(\cu_r(y))}^2] \le C r^{-d\al} 
\quad \mbox{and} \quad 
\E[\|\nabla v\|_{\underline{L}^2(\cu_r(y))}^2] \le C R^{-d\al}.
\end{equation}
No improvement on the estimate of $\E[J(\cu_R,p,q)]$ can be obtained from these observations alone, as we have not yet used the mixing properties of the coefficients. 

\smallskip

If~$v$ is sufficiently close (in say $L^2$) to an affine function of slope~$p_y$ in the mesoscopic cube~$\cu_r(y)$, then, since~$\tilde u_y$ is a solution of~\eqref{e.pde}, we may integrate by parts to get
\begin{equation*} \label{}
\innerply{\a \nabla \tilde u_y}{\nabla v}{\cu_r(y)} = \innerply{\a \nabla \tilde u_y}{p_y}{\cu_r(y)} + \mbox{a small error.}
\end{equation*}
Recall that the dual vector $q$ was chosen in~\eqref{e.defq} so that 
\begin{equation*} \label{}
\E\Ll[\fint_{\cu_R} \nabla v(x) \, dx \Rr] = 0,  
\quad \mbox{that is,}\quad
\sum_y \E\Ll[p_y \Rr] = 0.
\end{equation*}
Hence, the sum 
\begin{equation}
\label{e.cov}
\Ll( \frac R r  \Rr)^{-d} \sum_y \E[\innerply{\a \nabla \tilde u_y}{p_y}{\cu_r(y)}]  = \Ll( \frac R r  \Rr)^{-d} \sum_y \E\Ll[p_y \cdot \fint_{\cu_y(r)} \a \nabla \tilde u_y(x) \, dx \Rr]  
\end{equation}
can be seen as a measure of the correlations between the (discrete) random fields $\left(\fint_{\cu_r(y)} \a \nabla \tilde u_y\right)$ and $\left(p_y\right)$. 

\smallskip

The main insight is that the higher regularity theory guarantees that the slopes $p_y$ exist, that the approximation to $v$ has a sufficiently small error, and that $p_y$ \emph{changes very slowly in} $y$. Meanwhile, the spatial averages of the fluxes $\left(\fint_{\cu_r(y)} \a \nabla \tilde u_y\right)$ are \emph{essentially independent as $y$ varies}. If we denote by~$s$ a mesoscale with $r \ll s \ll R$ on which the vectors $p_y$ are approximately constant, then we can expect to bound~\eqref{e.cov} by
\begin{equation} \label{e.CLTscaling0}
R^{-d\al/2} r^{-d\al/2} \Ll(\frac s r \Rr)^{-d/2}.
\end{equation}
Indeed, by~\eqref{e.sketchbnds}, $R^{-d\alpha/2}$ is the expected size of each~$p_y$ and $r^{-d\alpha/2}$ is the expected size of the spatially averaged flux in a mesocube. The factor of~$(s/r)^{-d/2}$ comes from the CLT scaling, as~$(s/r)^d$ is the number of smaller mesocubes of size $r$ in each larger mesocube of size~$s$.

\smallskip

The expression in~\eqref{e.CLTscaling0} can be made smaller than $R^{-d\alpha}$ by choosing the mesoscales $r$ and $s$ appropriately, provided that $\alpha < 1$. This suggests that the correct estimate for $\E \left[ J(\cu_R,p,q) \right]$ should be (up to possible logarithmic factors) $R^{-d}$. The argument we present below does not perform so well, and saturates at $R^{-1}$ due to a boundary layer we have neglected in this sketch. 

\smallskip

\subsection{Declaration of parameters and mesoscales}
We now proceed with the rigorous argument for~\eqref{e.indystepcubes}. 
We take mesoscopic scales $r,s,l \geq 1$ such that
\begin{equation}
\label{e.integerratios}
\frac{R}{l}, \, \frac{l}{3s},\, \frac{s}{r} \in\N
\end{equation}
and 
\begin{equation} \label{e.mesoscales}
r = CR^{1-(m+2) \ep}\,, \quad s = CR^{1-2\ep}\,, \qquad l = CR^{1-\ep}\,, 
\end{equation}
for fixed parameters  $\ep\in \left( 0,\frac1{4m} \right]$ and $m \geq \max\{d,2\}$ to be selected below. Here the constants $C$ in~\eqref{e.mesoscales} are very close to~1 and are chosen so that the constraints~\eqref{e.integerratios} are satisfied. 

\smallskip

The largest mesoscale~$l$ denotes the thickness of a boundary layer we remove from~$\cu_R$ in the first part of the argument; it also serves as a reference scale for the regularity estimates we will apply at the smaller scale~$s$, where we apply Theorem~\ref{t.mesoregularity} to obtain polynomial approximations to~$v$. Finally,~$r$ denotes the size of the smallest mesoscale cells in which we compare~$v$ to local solutions (denoted by~$v_z$ below) chosen to match the polynomial approximations made on the larger scale. Each of these mesoscales, even the smallest, will be chosen to be very close to the macroscale~$R$. 

\smallskip

The degree of the polynomials used in the mesoscopic approximation will be $k\in\N$, also to be chosen. We let the exponent $\delta=\delta(d,\Lambda) >0$ be the minimum of the exponent given in the statement of Theorem~\ref{t.mesoregularity} for the choice $s=1$ and the one given in the statement of Proposition~\ref{p.quenchedEE} for the choices $s=1$, $U=\cu_{1/\sqrt{d}}$ and $\ep=1$. We may assume $\delta \leq 1$. We let $\X$ denote the maximum of the random variables $\X$ and $\mathcal{R}$ from Theorem~\ref{t.mesoregularity} and Proposition~\ref{p.quenchedEE}, respectively, with the same choices of parameters. 

\smallskip

In order to apply Theorem~\ref{t.mesoregularity} from scale $l$ to scale $s$, we must have that
\begin{equation} \label{e.mesoregisOK}
s\geq l^{\frac{k}{k+\delta}}.
\end{equation}
For this, it suffices to impose the restriction
\begin{equation} \label{e.mesoscales0}
\ep \leq \frac{\delta}{2(k+1)}. 
\end{equation}
In the course of the proof, we will also find it convenient to fix the parameters $m,k,\ep$ as follows: 
\begin{equation} \label{e.mesoscales2}
m := \frac{2(1+\al)}{1-\alpha} \vee \frac{2d(1+\al)}{2-d\alpha} \vee d
\end{equation}
and
\begin{equation} \label{e.mesoscales1}
k := \left\lceil \frac{10d(1+\theta) m}{\theta} \right\rceil  \quad \mbox{and} \quad \ep  :=  \frac{\alpha}{2(m+3)} \wedge  \frac{\delta}{2(k+1)}  \wedge \frac{\delta \theta}{20d(1+\theta) m} .
\end{equation}
Here the positive parameter $\theta(d,\Lambda)$ is the exponent related to the local and global Meyers' estimates, which in particular give us that, for every $\psi \in W^{1,2(1+\theta)}(\cu_r(y))$, $w \in \A(\cu_{2r}(y))$ and $\tilde w \in (\psi + H_0^1(\cu_r(y))) \cap \A(\cu_r(y))$, 
\begin{equation} \label{e.meyers1}
\left\| \nabla w \right\|_{\underline{L}^{2(1+\theta)}(\cu_r(y))} \leq C \left\| \nabla w \right\|_{\underline{L}^{2}(\cu_{2r}(y))} 
\end{equation}
and
\begin{equation} \label{e.meyers2}
\left\| \nabla \tilde w \right\|_{\underline{L}^{2(1+\theta)}(\cu_r(y))} \leq C \left\| \nabla \psi \right\|_{\underline{L}^{2(1+\theta)}(\cu_{r}(y))}\,.
\end{equation}
Notice that, since $\alpha \in \left[ \alpha_0,\frac1d\right)$, we have $\ep\geq c(d,\Lambda)>0$. In fact, each of $k$, $m$, and $\ep$ are bounded above and below by constants depending only on $(d,\Lambda)$. This implies that each of the constants $C$ and $c$ in the estimates below will depend only on $(d,\Lambda)$ instead of $(d,\Lambda,k,m,\ep)$.

\smallskip

The improvement in the exponent~$\alpha$ will be the result of a CLT scaling arising due to the fact that the~$v_z$'s inside each~$s$ sized cell are independent of each other. This is seen in the proof in Step~3, below. In the final step of the proof, we will define each of the mesoscales and other parameters in such a way that the error terms encountered in the argument will be at most $CKR^{-d\alpha-\ep(1-d\alpha)}$.

\subsection{Removal of boundary layer}

The improvement in the exponent~$\alpha$ is based on the application of regularity estimates to~$v$. Since we do not have a boundary condition for~$v$, these estimates are inapplicable near the boundary of the macroscopic cube~$\cu_R$. Therefore we must remove a boundary layer, which is accomplished very simply by subadditivity. It is the error we make here that forces our bootstrap argument to halt before~$d\alpha = 1$.

\smallskip

As $R/l$ is an integer, the cubes $\left\{ \cu_l(y) \right\}_{y\in l\Zd \cap \cu_R}$ form a partition of $\cu_R$ (up to a set of Lebesgue measure zero). We let $\cu_R^\circ$ denote the cube obtained from $\cu_R$ after removing a mesoscopic boundary layer of thickness $l$:
\begin{equation*}
\cu_R^\circ := \cu_R \setminus \bigcup\left\{ \cu_l(y)\,:\, y\in l\Zd, \partial \cu_l \cap \partial \cu_R \neq \emptyset \right\}.
\end{equation*}
Notice that 
\begin{equation*}
\frac{\left| \cu_R \setminus \cu_R^\circ \right|}{\left| \cu_R \right|} \leq C\left( \frac Rl \right)^{-1}. 
\end{equation*}
Then by subadditivity, stationarity, Lemma~\ref{l.fluxmaps} and the induction hypothesis, we obtain
\begin{align}
\label{e.boundarylayer}
\lefteqn{
\E \left[ J(\cu_R,p,q) \right]
} \qquad & \\
& \leq \frac{\left| \cu_R^\circ \right|}{\left| \cu_R \right|} \E \left[ J(\cu_R^\circ,p,q) \right]+\frac{\left| \cu_R \setminus \cu_R^\circ \right|}{\left| \cu_R \right|} \E \left[ J(\cu_R \setminus \cu_R^\circ,p,q)\right] \notag \\
& \leq  \frac{\left| \cu_R^\circ \right|}{\left| \cu_R \right|} \E \left[ J(\cu_R^\circ,p,q) \right] + CK\left( \frac{R}{l} \right)^{-1} l^{-d\alpha}. \notag
\end{align}
In the second inequality in the display above, we used the fact that $\cu_R \setminus \cu_R^\circ$ is a union of cubes of the form $y+\cu_l$; the induction hypothesis and Lemma~\ref{l.fluxmaps} imply that 
\begin{equation}
\label{e.fluxmapsrR}
\E\Ll[| Q(\cu_R) - Q(y+\cu_l)| \Rr] \leq CKl^{-d\alpha},
\end{equation}
so that another use of the induction hypothesis gives the desired estimate. 

\smallskip

Throughout the rest of the argument we denote
\begin{equation*}
\mathcal Z:= r\Zd \cap \cu_R^\circ. 
\end{equation*}

\subsection{Local mesoscopic approximations of~\texorpdfstring{$v$}{v}}

The next step in the argument is to introduce local approximations of~$v$ in each mesoscopic cube $\cu_{3r}(y)$, with $y\in \mathcal{Z}$. These are also solutions of~\eqref{e.pde}, that is, members of $\A(\cu_{3r}(y))$. The advantage will be that the approximations, which we denote by $v_y$, serve to localize the dependence on the environment and allow us to exploit the independence assumption. 

\smallskip

We first introduce polynomial approximations of $v$ in the larger mesoscopic cubes of the form $\cu_{3s}(z)$, $z\in s\Zd\cap \cu_R^\circ$. With $k\in\N$ to be defined below, we  select $w_z \in \Ahom_k$ such that 
\begin{equation}
\label{e.defwz}
\left\|  v - w_z \right\|_{\underline{L}^2(\cu_{3s}(z))}  = \inf_{w\in\Ahom_k} \left\|  v - w \right\|_{\underline{L}^2(\cu_{3s}(z))}.
\end{equation}
That is,~$w_z$ is the best approximation to~$v$ in~$L^2(\cu_{3s}(z))$ among $\ahom$-harmonic polynomials of degree at most~$k$. 

\smallskip

We next present some basic estimates concerning the expected size of $w_z$ and the quality of the approximation to~$u$. This is where we use the higher regularity estimates in Theorem~\ref{t.mesoregularity}.

\begin{lemma}
\label{e.basicwz}
Assume that~\eqref{e.mesoscales0} holds. Then there exists $C(d,\Lambda,k)< \infty$ such that, for each $z\in s\Zd \cap \cu_R^\circ$,
\begin{equation} 
\label{e.Ewzapprox}
\E \left[ \left\| v-w_z \right\|_{\underline{L}^2(\cu_{3s}(z))}^2 \right] \\ 
\leq Cl^2\left( \frac{s}{l} \right)^{2k+2}   \E \left[ \left\| \nabla v\right\|^2_{\underline{L}^2(\cu_l(z))}  \right] + CR^{-2d-1},
\end{equation}
\begin{equation}
\label{e.Ewzgradbound}
\E \left[ \left\| \nabla w_z \right\|_{L^\infty(\cu_{3s}(z))}^2 \right] 
\leq C\E \left[ \left\| \nabla v\right\|_{\underline{L}^2(\cu_l(z))}^2 \right] + CR^{-2d-1},
\end{equation}
and, for every $m \in \{2,\ldots,k\}$,
\begin{equation}
\label{e.Ewzbound}
\E \left[ s^{2(m-2)} \left\| \nabla^m w_z \right\|^2_{L^\infty(\cu_{3s}(z))} \right] 
\leq Cl^{-2} \, \E \left[ \left\| \nabla v\right\|^2_{\underline{L}^2(\cu_l(z))} \right] + Cs^{-4}R^{-2d-1}.
\end{equation}
\end{lemma}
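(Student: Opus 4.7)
The plan is to introduce the \emph{good event} $G := \{\X \leq l/C\}$, where $\X$ is the random variable of Theorem~\ref{t.mesoregularity}; on $G$ the mesoscopic higher regularity theory is at our disposal, while the contribution from $G^c$ will be negligible thanks to the stretched-exponential moment bound~\eqref{e.minimalradius}. All three estimates will take the form ``deterministic estimate on $G$'' plus ``crude polynomial bound times $\P[G^c]$''.

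For~\eqref{e.Ewzapprox}, I fix a ball $B_{R_0}(z) \subseteq \cu_l(z)$ with $R_0 \asymp l$ and a concentric ball $B_{r_0}(z) \supseteq \cu_{3s}(z)$ with $r_0 \asymp s$. On $G$ the hypotheses of Theorem~\ref{t.mesoregularity} are satisfied, the scale threshold $r_0 \gtrsim R_0^{k/(k+\delta)}$ being precisely~\eqref{e.mesoregisOK}. The theorem supplies some $w \in \Ahom_k$ with $\|v-w\|_{\underline{L}^2(\cu_{3s}(z))} \leq C(s/l)^{k+1}\|v\|_{\underline{L}^2(B_{R_0}(z))}$. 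Since $\Ahom_k$ contains constants, the left side is invariant under replacing $v$ by $v - (v)_{\cu_l(z)}$, and Poincar\'e's inequality on $\cu_l(z)$ then turns the right side into $Cl(s/l)^{k+1}\|\nabla v\|_{\underline{L}^2(\cu_l(z))}$. The $L^2(\cu_{3s}(z))$-best-approximation property of $w_z$ gives the same bound for $\|v-w_z\|_{\underline{L}^2(\cu_{3s}(z))}$, establishing the deterministic version of~\eqref{e.Ewzapprox} on $G$.

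For the derivative estimates, I pass to $L^\infty$ norms of derivatives of the polynomial $w_z$ via Lemma~\ref{l.Markov}: for $m \geq 1$, $\|\nabla^m w_z\|_{L^\infty(\cu_{3s})} \leq Cs^{-m}\inf_{P\in \mcl P_{m-1}}\|w_z-P\|_{\underline{L}^2(\cu_{3s}(z))}$. Triangle inequality splits the infimum into $\|v-w_z\|_{\underline{L}^2(\cu_{3s})}$ (already controlled) plus $\inf_{P\in \mcl P_{m-1}}\|v-P\|_{\underline{L}^2(\cu_{3s})}$, and the latter is handled by \emph{reapplying} Theorem~\ref{t.mesoregularity} with the smaller degree $m-1$ (whose scale threshold is weaker than, and so inherited from, the one for $k$); after Poincar\'e this yields $Cs(s/l)^{m-1}\|\nabla v\|_{\underline{L}^2(\cu_l(z))}$. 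Using $s\leq l$ and $k\geq m$, simple algebra shows this term dominates $s^{-m}\|v-w_z\|_{\underline{L}^2(\cu_{3s})}$ and gives precisely~\eqref{e.Ewzgradbound} and~\eqref{e.Ewzbound} on $G$. The borderline case $m=1$ is handled as above by taking the approximating ``degree $0$'' polynomial to be the constant $(v)_{\cu_{3s}(z)}$ and invoking Poincar\'e on $\cu_{3s}$, at which point one also needs the mesoscale Lipschitz estimate $\|\nabla v\|_{\underline{L}^2(\cu_{3s}(z))} \leq C \|\nabla v\|_{\underline{L}^2(\cu_l(z))}$, valid on $G$ as a consequence of Caccioppoli combined with the $k=0$ case of Theorem~\ref{t.mesoregularity}.

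The main obstacle is generating the $CR^{-2d-1}$ additive remainder, for which no quenched regularity is available. On $G^c$ I fall back to the deterministic a priori bound $\|\nabla v\|_{\underline{L}^2(\cu_R)}^2 \leq 4J(\cu_R,p,q) \leq C$ coming from~\eqref{e.Du2} and the normalization $|p|\leq 1$, $|q|\leq \Lambda|p|$ guaranteed by~\eqref{e.Q-unif}. Poincar\'e on $\cu_R$ controls $\|v-(v)_{\cu_R}\|_{\underline{L}^2(\cu_{3s})}$, hence $\|w_z-(w_z)_{\cu_R}\|_{\underline{L}^2(\cu_{3s})}$ (by the best-approximation property), by at most a polynomial in $R$; Lemma~\ref{l.Markov} propagates this to polynomial bounds on all derivatives of $w_z$. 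Since $l \sim R^{1-\ep}$ with $\ep$ fixed and small, \eqref{e.minimalradius} applied with any exponent $s'\in (0,d)$ yields $\P[G^c] \leq C \exp(-cl^{s'})$, and this stretched-exponential decay annihilates the polynomial prefactor, leaving a contribution much smaller than $R^{-2d-1}$, as required.
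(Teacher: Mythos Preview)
Your approach is essentially the same as the paper's: split on a good event where Theorem~\ref{t.mesoregularity} applies, use it (at degree $k$ for~\eqref{e.Ewzapprox} and at lower degrees for the derivative bounds) together with Lemma~\ref{l.Markov} and Poincar\'e, and absorb the bad event via the crude energy bound~\eqref{e.Du2} and the tail estimate~\eqref{e.minimalradius}. The paper organizes the steps slightly differently (it proves~\eqref{e.Ewzgradbound} first via the purely deterministic inequality $\|\nabla w_z\|_{L^\infty(\cu_{3s}(z))}\le C\|\nabla v\|_{\underline L^2(\cu_{3s}(z))}$, then invokes the Lipschitz estimate), but the content is the same.

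There is one small slip in your definition of the good event. You take $G=\{\X\le l/C\}$ and then verify only the threshold $r_0\gtrsim R_0^{k/(k+\delta)}$ from~\eqref{e.mesoregisOK}. But Theorem~\ref{t.mesoregularity} also requires $r_0\ge\X$, and since $r_0\asymp s\ll l$, the event $\{\X\le l/C\}$ does not ensure this. The fix is simply to set $G:=\{T_z\X\le cs\}$ (note also that the random variable must be the translate $T_z\X$, since you are centering at $z$); the bad-event estimate is unaffected because $s\ge R^{1/2}$ still gives $\P[T_z\X>cs]\le C\exp(-cs)\le CR^{-2d-1}$, exactly as in the paper's display~\eqref{e.roughbnd}.
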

\begin{proof}
We break the argument into three steps.

\smallskip

\noindent \emph{Step 1.} The proof of~\eqref{e.Ewzgradbound}. Since $w_z$ is an $\ahom$-harmonic polynomial,  we have
\begin{align} \label{e.detwzgradbnd}
\left\| \nabla w_z \right\|_{L^\infty(\cu_{3s}(z))} 
& \leq \frac Cs \left\| w_z - \left( v \right)_{\cu_{3s}(z)} \right\|_{\underline{L}^2(\cu_{3s}(z))} \\
& \leq \frac Cs  \left\| v - \left( v \right)_{\cu_{3s}(z)} \right\|_{\underline{L}^2(\cu_{3s}(z))} \notag \\
& \leq C  \left\| \nabla v \right\|_{\underline{L}^2(\cu_{3s}(z))},\notag
\end{align}
where, in the above display, we used Lemma~\ref{l.Markov} in the first line,  the optimality of $w_z$ in~\eqref{e.defwz} tested against the constant function $\left( v \right)_{\cu_{3s}(z)}$ and the triangle inequality to get the second line, and finally the Poincar\'e inequality to get the last line. Squaring and taking expectations, we get
\begin{align*}
\lefteqn{
\E \left[ \left\| \nabla w_z \right\|_{L^\infty(\cu_{3s}(z))}^2 \right] 
} \qquad & \\
& \leq C \E\left[ \left\| \nabla v \right\|_{\underline{L}^2(\cu_{3s}(z))}^2 \indc_{\{ T_{z}\X \leq C s\}} \right] + C \E \left[ \left\| \nabla v \right\|_{\underline{L}^2(\cu_{3s}(z))}^2 \indc_{\{ T_{z}\X > C s\}}\right] \\
& \leq C \E \left[ \left\| \nabla v \right\|_{\underline{L}^2(\cu_{l}(z))}^2  \right] +  C \E \left[ \left\| \nabla v \right\|_{\underline{L}^2(\cu_{3s}(z))}^2 \indc_{\{ T_{z}\X > C s\}}\right].
\end{align*}
To bound the second term on the right side, we use~\eqref{e.Du2} and the (deterministic) estimate $J(\cu_R,p,q) \leq C$ to very crudely bound 
\begin{equation*}
\left\| \nabla v\right\|_{\underline{L}^2(\cu_{3s}(z))}^2 \leq C s^{-d} R^{d} \leq CR^d
\end{equation*}
and then combine this with strong integrability of $\X$, which give the following estimate:
\begin{multline}
\label{e.roughbnd}
\E \left[ \indc_{\{ T_{z}\X > s\} } \left\| \nabla v\right\|_{\underline{L}^2(\cu_{3s}(z))}^2  \right]  \\
 \leq CR^{d+2} \P \left[  \X > s \right] 
 \leq C R^{d+2} \exp\left( - c s \right) 
 \leq CR^{-2d-1}.
\end{multline}
In the last line we used $s\geq R^{\frac12}$ from~\eqref{e.mesoscales}. This completes the proof of~\eqref{e.Ewzgradbound}.

\smallskip

\noindent \emph{Step 2.} The proof of~\eqref{e.Ewzapprox}. According to Theorem~\ref{t.mesoregularity} (recall that the assumption~\eqref{e.mesoscales0} implies~\eqref{e.mesoregisOK}), we have
\begin{equation*} \label{}
\left\| v - w_{z} \right\|_{\underline{L}^2(\cu_{3s}(z))} \indc_{\{T_{z}\X \leq C s\}} \\
\leq C \left( \frac{s}{l} \right)^{k+1} l \left\| \nabla v\right\|_{\underline{L}^2(\cu_l(z))} \indc_{\{T_{z}\X \leq C s\}}.
\end{equation*}
Recall that ~$\{ T_y\}_{y\in\Rd}$ is the translation group acting on the probability space~$\Omega$. In the event that $T_z\X$ is too large compared to $s$, we proceed differently. Applying~\eqref{e.defwz} and the Poincar\'e inequality, we get
\begin{align*}
\left\| v - w_{z} \right\|_{\underline{L}^2(\cu_{3s}(z))} \indc_{\{T_{z}\X > C s\}} 
& \leq \left\| v - \left( v \right)_{\cu_{l}(z)} \right\|_{\underline{L}^2(\cu_{3s}(z))} \indc_{\{T_{z}\X > C s\}} \\
& \leq C\left( \frac ls \right)^{\frac d2}\left\| v - \left( v \right)_{\cu_{l}(z)} \right\|_{\underline{L}^2(\cu_{l}(z))} \indc_{\{T_{z}\X > C s\}} \\
& \leq C l \left( \frac ls \right)^{\frac d2} \left\| \nabla v\right\|_{\underline{L}^2(\cu_l(z))} \indc_{\{ T_{z}\X > C s\}}.
\end{align*}
Combining the previous two displays yields
\begin{multline*}
\left\| v - w_{z} \right\|_{\underline{L}^2(\cu_{3s}(z))} \\
\leq C \left( \frac{s}{l} \right)^{k+1} l \left\| \nabla v\right\|_{\underline{L}^2(\cu_l(z))} \indc_{\{T_{z}\X \leq C s\}} + Cl\left( \frac ls \right)^{\frac d2} \left\| \nabla v\right\|_{\underline{L}^2(\cu_l(z))} \indc_{\{ T_{z}\X > C s\}}.
\end{multline*}
Squaring and taking expectations, we obtain
\begin{multline}
\label{e.polyapproxest1}
\E \left[  \left\| v - w_{z} \right\|_{L^\infty(\cu_{3s}(z))}^2 \right] 
\leq C\left( \frac{s}{l} \right)^{2k+2} l^2  \E \left[ \left\| \nabla v\right\|_{\underline{L}^2(\cu_l(z))}^2 \right] 
\\
+ Cl^2 \left( \frac ls \right)^{d}  \E \left[ \indc_{\{ T_{z}\X > s\} } \left\| \nabla v\right\|_{\underline{L}^2(\cu_l(z))}^2  \right].
\end{multline}
To estimate the second term on the right side, we proceed in the same way as the end of Step~1, above, to get 
\begin{align*}
l^2 \left( \frac ls \right)^{d} \E \left[ \indc_{\{ T_{[y]}\X > s\} } \left\| \nabla v\right\|_{\underline{L}^2(\cu_l(z))}^2  \right]  \leq CR^{-2d-1}.
\end{align*}
This completes the proof of~\eqref{e.Ewzapprox}.

\smallskip

\noindent \emph{Step 3.} The proof of~\eqref{e.Ewzbound}. We take $\ell_z \in\Ahom_1$ to be the best affine approximation to~$v$:
\begin{equation*}
\left\|  v - \ell_z \right\|_{\underline{L}^2(\cu_{3s}(z)} 
= \inf_{\ell \in\Ahom_1}\left\|  v - \ell \right\|_{\underline{L}^2(\cu_{3s}(z)}.
\end{equation*}
Using the triangle inequality and~\eqref{e.Ewzapprox} twice, once with $k$ as above and once with $k=1$, we get
\begin{equation*}
\E \left[ \left\| w_z - \ell_z \right\|_{\underline{L}^2(\cu_{3s}(z))}^2\right] 
\leq Cl^2\left( \frac{s}{l} \right)^{4}   \E \left[ \|\nabla v\|_{\underline{L}^2(\cu_l(z))}^2  \right] + CR^{-2d-1}.
\end{equation*}
Since $w_z-\ell_z$ is an $\ahom$-harmonic polynomial, this yields, by Lemma~\ref{l.Markov},
\begin{align*}
\E \left[ \sup_{m\in\{2,\ldots,k\} } s^{2(m-2)} \left\| \nabla^m w_z \right\|^2_{L^\infty(\cu_{3s}(z))} \right] 
& \leq C \E \left[ s^{-4} \left\| w_z - \ell_z \right\|_{L^\infty(\cu_{3s}(z))}^2 \right] \\
& \leq Cl^{-2}  \E \left[ \left\| \nabla v\right\|_{\underline{L}^2(\cu_l(z))}^2 \right] + Cs^{-4}R^{-2d-1},
\end{align*}
which is~\eqref{e.Ewzbound}.
\end{proof}

We next introduce the local mesoscopic approximations to~$v$, which are based on $w_z$. For each $y\in \mathcal{Z}$, we denote by $[y]$ the unique element of $s\Zd \cap \cu_R^\circ$ such that $y \in \cu_s([y])$. For each $y \in \mathcal{Z}$, we let $v_y$ denote the solution of the Dirichlet problem in $\cu_{r}(y)$ with boundary condition $w_{[y]}$. That is, $v_y$ is the unique element of $\mathcal{A}(\cu_{r}(y)) \cap \left( w_{[y]} + H^1_0(\cu_{r}(y)) \right)$.
Note that, for every $\phi\in \A(\cu_r(y))$, 
\begin{equation} \label{e.firstvaruy}
\innerply{\a \left( \nabla v_y - \nabla w_{[y]} \right)}{\nabla \phi}{\cu_r(y)} = 0.
\end{equation}
We next give the estimate for the expected difference between the gradients of~$v$ and~$v_y$, using the previous lemma. 

\begin{lemma}
\label{l.graduuz}
Assume that~\eqref{e.mesoscales0} and~\eqref{e.mesoscales1} hold.
Then there exists a constant $C(d,\Lambda,k) < \infty$ such that
\begin{equation} \label{e.graduuz}
\left| \mathcal Z \right|^{-1} \sum_{y\in \mathcal Z} \E \left[ \left\| \nabla v- \nabla v_y\right\|_{\underline{L}^2(\cu_r(y))}^2  \right] 
\leq  
CKR^{-d\alpha - 4dm\ep}.
\end{equation} 
\end{lemma}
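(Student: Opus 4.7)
The plan is to reduce the gradient bound to $L^2$ estimates of $v - v_y$, then combine two complementary sources of smallness: the higher-order polynomial approximation of $v$ by $w_{[y]}$ on the large mesoscopic cube $\cu_{3s}([y])$ supplied by Lemma~\ref{e.basicwz}, and the quenched homogenization error bound from Proposition~\ref{p.quenchedEE} applied to $v_y$ versus $w_{[y]}$ on the small cube $\cu_r(y)$. The key observation making the latter applicable is that $w_{[y]}\in\Ahom_k$ is itself $\ahom$-harmonic, so it serves as its own homogenized solution: after rescaling, Proposition~\ref{p.quenchedEE} yields $\|v_y - w_{[y]}\|_{\underline{L}^2(\cu_r(y))} \le C r^{1-\delta} \|\nabla w_{[y]}\|_{\underline{L}^{2+\ep}(\cu_r(y))}$ on the event $\{T_y\mcl R \le r\}$, and a trivial reverse-H\"older for polynomials bounds the right-hand side by $Cr^{1-\delta}\|\nabla w_{[y]}\|_{L^\infty(\cu_{3s}([y]))}$, whose averaged $L^2$ expectation is controlled by \eqref{e.Ewzgradbound}.

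Since $v - v_y$ is $\a$-harmonic on $\cu_r(y)$, the Caccioppoli inequality (applied at a slightly enlarged concentric scale, or equivalently via energy minimality tested against a cutoff of $v - w_{[y]}$) controls $\|\nabla(v-v_y)\|_{\underline{L}^2(\cu_r(y))}^2$ by $Cr^{-2}\|v - v_y\|_{\underline{L}^2(\cu_r(y))}^2 \le 2Cr^{-2}\bigl(\|v-w_{[y]}\|_{\underline{L}^2(\cu_r(y))}^2 + \|v_y - w_{[y]}\|_{\underline{L}^2(\cu_r(y))}^2\bigr)$. The first summand is handled by a combinatorial averaging: since the subcubes $\{\cu_r(y):[y]=z\}$ partition $\cu_s(z)$, the average over $y$ with $[y]=z$ fixed of $\|v - w_z\|_{\underline L^2(\cu_r(y))}^2$ equals $\|v-w_z\|_{\underline L^2(\cu_s(z))}^2 \le 3^d\|v-w_z\|_{\underline L^2(\cu_{3s}(z))}^2$, whose expectation is controlled via Lemma~\ref{e.basicwz}, estimate \eqref{e.Ewzapprox}, by $Cl^2(s/l)^{2k+2}\E[\|\nabla v\|_{\underline L^2(\cu_l(z))}^2]$; a further sum over $z$, using the $l/s$-fold overlap of the cubes $\{\cu_l(z)\}$ together with \eqref{e.EDu2}, brings the total to $CKR^{-d\alpha}\cdot l^2(s/l)^{2k+2}$. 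Bad events $\{T_y\X > s\}$ or $\{T_y\mcl R > r\}$ are treated as in the proof of Lemma~\ref{e.basicwz}, combining a crude deterministic $L^2$-bound with the exponential tails \eqref{e.minimalradius} and \eqref{e.minimalradiusEE} to contribute only a negligible $O(R^{-2d-1})$.

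Substituting the mesoscales $r=R^{1-(m+2)\ep}$, $s=R^{1-2\ep}$, $l=R^{1-\ep}$ from \eqref{e.mesoscales} together with the choices in \eqref{e.mesoscales1}, the polynomial-approximation contribution scales as $r^{-2}\cdot l^2(s/l)^{2k+2}\cdot KR^{-d\alpha}\sim R^{2\ep(m-k)}KR^{-d\alpha}$, which is absorbed into $KR^{-d\alpha-4dm\ep}$ because $k$ is chosen so large that $k \ge (2d+1)m$; the Proposition~\ref{p.quenchedEE} contribution scales as $r^{-2}\cdot r^{2-2\delta}\cdot KR^{-d\alpha}\sim R^{-2\delta(1-(m+2)\ep)}KR^{-d\alpha}$, which is absorbed because $\ep\le\delta/(4dm)$. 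The main technical obstacle is that $\nabla w_{[y]}$ is not itself small---only $v - w_{[y]}$ is small in $L^2$---so one cannot compare $v$ to $v_y$ through a direct gradient manipulation such as bounding $\|\nabla(v-v_y)\|_{L^2}$ by $\|\nabla(v-w_{[y]})\|_{L^2}$; instead one must route the estimate through the $L^2$ norm of $v - v_y$, which is why the Caccioppoli (or equivalent cutoff/Meyers) reduction is essential and why the two distinct mesoscales $r$ and $s$ both enter the final rate.
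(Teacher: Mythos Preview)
Your overall strategy---reduce to the $L^2$ norm of $v-v_y$, split this as $(v-w_{[y]})+(w_{[y]}-v_y)$, control the first piece by Lemma~\ref{e.basicwz} and the second by Proposition~\ref{p.quenchedEE}---is exactly the paper's. The scaling computations in your final paragraph are also correct. But there is a genuine gap in your Caccioppoli step.

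You assert that Caccioppoli gives
\[
\|\nabla(v-v_y)\|_{\underline L^2(\cu_r(y))}^2 \le Cr^{-2}\|v-v_y\|_{\underline L^2(\cu_r(y))}^2
\]
on the \emph{full} cube $\cu_r(y)$. This is false: Caccioppoli is an interior estimate and only yields $\int_{\cu_t(y)}|\nabla(v-v_y)|^2 \le C(r-t)^{-2}\int_{\cu_r(y)}|v-v_y|^2$ for $t<r$. Your parenthetical ``applied at a slightly enlarged concentric scale'' does not work because $v_y$ is by definition the Dirichlet solution on $\cu_r(y)$ and cannot be extended; redefining $v_y$ on a larger cube would destroy the identity $v_y-w_{[y]}\in H^1_0(\cu_r(y))$ needed later in~\eqref{e.bigmagic}. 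The alternative you gesture at, ``energy minimality tested against a cutoff of $v-w_{[y]}$'', would at best bound $\|\nabla(v-v_y)\|_{L^2}$ by $\|\nabla(v-w_{[y]})\|_{L^2}$, which, as you yourself note in the last paragraph, is \emph{not} small.

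The paper resolves this by introducing an intermediate scale $t=r-r^{1-\tilde\ep}$ with $\tilde\ep=6dm\ep(1+\theta)/\theta$ and splitting $\int_{\cu_r(y)}|\nabla(v-v_y)|^2$ into the interior $\cu_t(y)$ (handled by Caccioppoli with the worse constant $(r-t)^{-2}=r^{-2+2\tilde\ep}$, which is still acceptable) and the boundary layer $\cu_r(y)\setminus\cu_t(y)$. The boundary layer is handled separately: by H\"older's inequality and the Meyers estimates~\eqref{e.meyers1}--\eqref{e.meyers2}, its contribution picks up the small volume factor $((r-t)/r)^{\theta/(1+\theta)}=r^{-\tilde\ep\theta/(1+\theta)}$, and the choice of $\tilde\ep$ is exactly what makes this factor beat the loss in the Caccioppoli constant. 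Your passing mention of ``Meyers'' suggests you may have sensed this issue, but the mechanism---a genuine interior/boundary-layer decomposition at a carefully chosen intermediate scale---is not present in your argument and is not optional.
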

\begin{proof}
Set $\eta_y := v - v_y $ and let 
\begin{equation} \label{e.tfix}
t : = r - r^{1-\tilde \ep}\,, \qquad \tilde \ep := \frac{6d m \ep(1+\theta)}{\theta}\,, 
\end{equation}
where $\theta(d,\Lambda)$ is as in Meyers' estimates~\eqref{e.meyers1} and~\eqref{e.meyers2}. Observe that the choice of $\ep$ in~\eqref{e.mesoscales1} guarantees that $\tilde \ep \leq \frac12$. We first split each of the summands into two pieces as follows:
\begin{equation} \label{e.sum0000}
\int_{\cu_r(y)} \left|\nabla \eta_y(x) \right|^2 \, dx = \int_{\cu_t(y)} \left|\nabla \eta_y(x)\right|^2 \, dx + \int_{y +\cu_r \setminus \cu_t} \left|\nabla \eta_y(x)\right|^2 \, dx\,.
\end{equation}
The first term we will estimate with the aid of the Caccioppoli estimate and Lemma~\ref{e.basicwz}, and the second one using the Meyers' estimates.
Indeed, we have by  the Caccioppoli estimate that
\begin{equation*} 
\frac{1}{|\cu_r|}\int_{\cu_t(y)} \left|\nabla \eta_y(x)\right|^2 \, dx \leq \frac{C}{(r-t)^2} \fint_{\cu_r(y)} \left|\eta_y(x) \right|^2 \, dx \,.
\end{equation*}
To estimate the term on the right, we have by the triangle inequality that
\begin{equation*} 
\fint_{\cu_r(y)} \left|\eta_y(x) \right|^2 \, dx  \leq 2 \left\| v-w_{[y]} \right\|_{\underline{L}^2(\cu_{r}(y))}^2  + 2 \left\| v_y-w_{[y]} \right\|_{\underline{L}^2(\cu_{r}(y))}^2 \,.
\end{equation*}
Now, for the first term we have
\begin{align*} 
\left| \mathcal Z \right|^{-1} \sum_{y\in \mathcal Z}  \left\| v-w_{[y]} \right\|_{\underline{L}^2(\cu_{r}(y))}^2 & = 
\left| \mathcal Z \right|^{-1} \sum_{z \in  s\Z^d \cap \cu_R^\circ}  \left( \frac{s}{r}\right)^d \left\| v-w_{z} \right\|_{\underline{L}^2(\cu_{s}(z))}^2 \\
& \leq C  \left(\frac{s}{R}\right)^d\sum_{z \in  s\Z^d \cap \cu_R^\circ}  \left\| v-w_{z} \right\|_{\underline{L}^2(\cu_{s}(z))}^2 \,,
\end{align*}
and thus we obtain by~\eqref{e.Ewzapprox} that
\begin{align*} 
\lefteqn{ \left| \mathcal Z \right|^{-1} \sum_{y\in \mathcal Z}  \left\| v-w_{[y]} \right\|_{\underline{L}^2(\cu_{r}(y))}^2  } \qquad & \\
& \leq C  \left(\frac{s}{R}\right)^d \sum_{z \in  s\Z^d \cap \cu_R^\circ}   \left( l^2 \left( \frac{s}{l} \right)^{2k+2}   \E \left[ \left\| \nabla v\right\|^2_{\underline{L}^2(\cu_l(z))}  \right] + CR^{-2d-1} \right)\\ 
& \leq C \left( l^2 \left( \frac{s}{l} \right)^{2k+2}   \E \left[  \left\| \nabla v\right\|^2_{\underline{L}^2(\cu_R)}  \right]  + R^{-2d-1}\right)\,.
\end{align*}
On the other hand, using Proposition~\ref{p.quenchedEE} and~\eqref{e.Ewzgradbound} we get
\begin{align*} 
\E\left[ \fint_{\cu_r(y)} \left| v_y(x) - w_{[y]}(x)\right|^2 \, dx \right] & \leq C r^{2-2 \delta} \E \left[ \left\| \nabla w_{[y]} \right\|^2_{L^\infty(\cu_r(y))}  \right]
\\ & \leq C r^{2-2 \delta} \left(\E \left[ \left\| \nabla v\right\|^2_{\underline{L}^2(\cu_l(y))}  \right] + C R^{-2d-1}  \right)\,.
\end{align*}
Summing this over $\mathcal Z$ implies
\begin{equation*} 
\left| \mathcal Z \right|^{-1} \sum_{y\in \mathcal Z}  \E\left[ \fint_{\cu_r(y)} \left| v_y(x) - w_{[y]}(x)\right|^2 \, dx \right] \leq 
 C r^{2-2 \delta} \left( E \left[  \left\| \nabla v\right\|^2_{\underline{L}^2(\cu_R)}  \right]  + R^{-d-1}\right)\,.
\end{equation*}
Connecting above estimates and using the fact, from~\eqref{e.mesoscales1}, we have $\delta \geq \tilde \ep$, we arrive at 
\begin{multline} \label{e.sum1000}
\left| \mathcal Z \right|^{-1} \sum_{y\in \mathcal Z} \E\left[  \fint_{\cu_t(y)} \left|\nabla \eta_y(x)\right|^2 \, dx  \right]  
\\Ê\leq C \left(  \left( \frac{l}{r-t}  \right)^{2}\left( \frac{s}{l} \right)^{2k+2}   + \left( \frac{r^{1-\delta}}{r-t} \right)^2  \right) \E \left[ \left\| \nabla v\right\|^2_{\underline{L}^2(\cu_R)}  \right]  + C R^{-d-1}\,.
\end{multline}
By the choice of $k$ and $t$ in~\eqref{e.mesoscales1} and~\eqref{e.tfix}, respectively, we have
 \begin{equation*} 
 \left( \frac{l}{r-t}  \right)^{2}\left( \frac{s}{l} \right)^{2k+2} = R^{2(m\ep + \tilde \ep - k \ep)} = R^{-2\ep \left( k - \frac{6d m(1+\theta)}{\theta} - m \right)} \leq R^{-4dm\ep}\,,
\end{equation*}
and by the condition for $\ep$ in~\eqref{e.mesoscales1}, 
\begin{equation*} 
\left( \frac{r^{1-\delta}}{r-t} \right)^2 = R^{- 2 \left(1-\ep(m+2)\right) \left( \delta - \frac{6d m(1+\theta) \ep}{\theta} \right)} \leq R^{-4dm\ep}\,.
\end{equation*}
Therefore the induction hypothesis yields by way of~\eqref{e.EDu2} that 
\begin{equation} \label{e.sum1100}
\left| \mathcal Z \right|^{-1} \sum_{y\in \mathcal Z} \E\left[  \fint_{\cu_t(y)} \left|\nabla \eta_y(x)\right|^2 \, dx  \right]   \leq CK R^{-d\alpha -4dm\ep} \,.
\end{equation}

\smallskip

To treat the second term in~\eqref{e.sum0000}, Meyers' estimates~\eqref{e.meyers1} and~\eqref{e.meyers2} provide us, via H\"older's inequality, 
\begin{align*} 
\frac1{|\cu_r|} \int_{y + \cu_r \setminus \cu_t} |\nabla \eta(x)|^2  \, dx
           & \leq \left( \frac{|\cu_r \setminus \cu_t|}{|\cu_r|}\right)^{\frac{\theta}{1+\theta} } \left( \fint_{\cu_r(y)}  |\nabla \eta(x)|^{2+2\theta} \, dx \right)^{\frac{1}{1+\theta}} 
	\\ & \leq C \left(\frac{r-t}{r}\right)^{\frac{ \theta }{1+\theta} } \fint_{\cu_{3r}(y)}  \left( |\nabla v(x)| + |\nabla w_{[y]}(x)| \right)^{2}  \, dx\,.
\end{align*}
Following Step 1 in the proof of Lemma~\ref{e.basicwz} and applying~\eqref{e.Ewzgradbound} once more we obtain
\begin{equation*} 
\E\left[ \frac1{|\cu_r|} \int_{y + \cu_r \setminus \cu_t} |\nabla \eta(x)|^2  \, dx \right] \leq C \left(\frac{r-t}{r}\right)^{\frac{  \theta }{1+\theta} } \left( \E \left[ \left\| \nabla v\right\|_{\underline{L}^2(\cu_l(y))}^2 \right] + CR^{-2d-1} \right)\,.
\end{equation*}
By the choice of $t$ in~\eqref{e.tfix} and $\ep$ in~\eqref{e.mesoscales1}  we have
\begin{equation*} 
\left(\frac{r-t}{r}\right)^{\frac{  \theta }{1+\theta} } = R^{ - \left( 1-\ep(m+2)\right)  \frac{\tilde \ep \theta}{1+\theta}} \leq R^{-4dm\ep }\,.
\end{equation*}
Therefore the summation yields, as before,
\begin{equation} \label{e.sum1010}
\left| \mathcal Z \right|^{-1} \sum_{y\in \mathcal Z}  \E\left[ \frac1{|\cu_r|} \int_{y + \cu_r \setminus \cu_t} |\nabla \eta(x)|^2  \, dx \right] \leq CK R^{-d\alpha -4dm\ep} \,.
\end{equation}
Combining~\eqref{e.sum0000} with~\eqref{e.sum1100} and~\eqref{e.sum1010} yields the statement of the lemma.
\end{proof}

Next we show, using the previous lemma and the uniform convexity of $\J$ in $p$ and $q$ separately, that the expected difference between $\J(v,\cu_r(y),p,q)$ and $\J(v_y,\cu_r(y),p,q)$ is small, after averaging over all $y\in \mathcal{Z}$. 

\begin{lemma}
\label{l.Juuz}
Assume~\eqref{e.mesoscales0} and~\eqref{e.mesoscales1}. There exists $C(d,\Lambda,k) < \infty$ such that
\begin{equation}
\label{e.Juuz}
\left| \mathcal{Z} \right|^{-1}
\sum_{y \in \mathcal{Z}} \E \big[ \left| \J\left( v,\cu_r(y),p,q \right) -  \J\left(v_y,\cu_r(y),p,q \right) \right| \big] 
 \leq C KR^{-d(\alpha +\ep)}. 
\end{equation}
\end{lemma}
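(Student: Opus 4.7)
Plan. My approach is to rewrite $\J(v, \cu_r(y), p, q) - \J(v_y, \cu_r(y), p, q)$ using the second-variation formula~\eqref{e.secondvar} applied at the unconstrained local maximizer $\tilde u_y := u(\cdot, \cu_r(y), p, q)$. Since both $v$ and $v_y$ belong to $\A(\cu_r(y))$, applying~\eqref{e.secondvar} to each and subtracting (using the symmetry of $\a$ to polarize) yields the exact identity
\begin{equation*}
\J(v, \cu_r(y), p, q) - \J(v_y, \cu_r(y), p, q) = \tfrac{1}{2}\innerply{\a(\nabla v + \nabla v_y - 2\nabla \tilde u_y)}{\nabla v_y - \nabla v}{\cu_r(y)}.
\end{equation*}
Combining the triangle inequality applied to $\nabla v + \nabla v_y - 2\nabla \tilde u_y = (\nabla v - \nabla \tilde u_y) + (\nabla v_y - \nabla \tilde u_y)$ with Cauchy--Schwarz gives the pointwise-in-$y$ bound
\begin{equation*}
|\J(v, \cu_r(y), p, q) - \J(v_y, \cu_r(y), p, q)| \leq \Lambda \|\nabla v - \nabla \tilde u_y\|_{\underline{L}^2(\cu_r(y))} \|\nabla v - \nabla v_y\|_{\underline{L}^2(\cu_r(y))} + \tfrac{\Lambda}{2} \|\nabla v - \nabla v_y\|_{\underline{L}^2(\cu_r(y))}^2.
\end{equation*}

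Taking expectations, averaging over $y \in \mcl Z$, and applying Cauchy--Schwarz both inside $\E$ and inside the sum then reduces the estimate to controlling the two quantities
\begin{equation*}
A := \frac{1}{|\mcl Z|}\sum_{y\in\mcl Z} \E\bigl[\|\nabla v - \nabla \tilde u_y\|_{\underline{L}^2(\cu_r(y))}^2\bigr], \qquad B := \frac{1}{|\mcl Z|}\sum_{y\in\mcl Z} \E\bigl[\|\nabla v - \nabla v_y\|_{\underline{L}^2(\cu_r(y))}^2\bigr],
\end{equation*}
via the master inequality $|\mcl Z|^{-1}\sum_y \E[|\J(v,\cu_r(y),p,q) - \J(v_y,\cu_r(y),p,q)|] \leq \Lambda \sqrt{AB} + \tfrac{\Lambda}{2} B$. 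The bound $B \leq CKR^{-d\alpha - 4dm\ep}$ is exactly Lemma~\ref{l.graduuz}. For $A$, uniform ellipticity $\a \geq I_d$ and the second-variation formula give $\|\nabla v - \nabla \tilde u_y\|^2 \leq 2(J(\cu_r(y),p,q) - \J(v,\cu_r(y),p,q))$; averaging via stationarity and the disjoint-tiling identity $|\mcl Z|^{-1}\sum_y \J(v,\cu_r(y),p,q) = \J(v, \cu_R^\circ, p, q)$ reduces $A$ to twice the sub-additivity defect $\E[J(\cu_r,p,q)] - \E[\J(v,\cu_R^\circ,p,q)]$.

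The heart of the argument is then to bound this sub-additivity defect using only the induction hypothesis $\mcl S(\alpha, K)$. Because $q = Q(\cu_R)p$ need not equal $Q(\cu_r)p$, I would combine Lemma~\ref{l.fluxmaps} with $\mcl S(\alpha, K)$ to obtain $|Q(\cu_R) - Q(\cu_r)| \leq CK r^{-d\alpha}$, and then use the uniform convexity of $q \mapsto \E[J(\cu_r, p, q)]$ at its minimum $Q(\cu_r)p$ (inequality~\eqref{e.Jconvexq}) to upgrade the induction hypothesis into $\E[J(\cu_r, p, q)] \leq CK r^{-d\alpha}$. Replacing $\cu_R^\circ$ by $\cu_R$ in the second term (using $\J(v, \cu_R, p, q) = J(\cu_R, p, q)$ and $\E[J(\cu_R, p, q)] \leq KR^{-d\alpha}$) introduces a boundary-layer correction of volume ratio $\lesssim R^{-\ep}$, which I would control crudely via $\E[\|\nabla v\|^2_{\underline{L}^2(\cu_R\setminus\cu_R^\circ)}] \leq (|\cu_R|/|\cu_R\setminus\cu_R^\circ|)\,\E[\|\nabla v\|^2_{\underline{L}^2(\cu_R)}] \leq CKR^{\ep - d\alpha}$ (from~\eqref{e.EDu2}). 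Together these yield the bound $A \leq CKr^{-d\alpha} + CK^{1/2}R^{-d\alpha/2 - \ep/2}$ up to lower-order terms.

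Finally, inserting the bounds on $A$ and $B$ into $\Lambda\sqrt{AB} + \tfrac{\Lambda}{2}B$ and verifying domination by $CKR^{-d(\alpha + \ep)}$ is a routine algebraic exercise, yielding two inequalities in $(m, \ep, d, \alpha)$, one for each of the two summands in the bound on $A$. I expect this to be the easiest step: using $m \geq d \geq 2$ from~\eqref{e.mesoscales2}, $\ep \leq \alpha/(2(m+3))$ from~\eqref{e.mesoscales1}, and $\alpha < 1/d$, both inequalities are satisfied with a positive margin, concluding the proof.
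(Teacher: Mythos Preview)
Your proposal is correct and follows essentially the same route as the paper: both arguments rewrite $\J(v,\cu_r(y),p,q)-\J(v_y,\cu_r(y),p,q)$ via the second-variation identity~\eqref{e.secondvar} at the local maximizer $\tilde u_y$, then bound the result by a product of $\|\nabla v-\nabla v_y\|_{\underline L^2(\cu_r(y))}$ against a factor controlled by the induction hypothesis, and close by combining Lemma~\ref{l.graduuz} for the first factor with $\E[J(\cu_r,p,q)]\le CKr^{-d\alpha}$ (i.e.~\eqref{e.fluxmapsrR} with $r$ in place of $l$) for the second.

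Two minor remarks. First, your handling of the quantity $A=|\mcl Z|^{-1}\sum_y\E[\|\nabla v-\nabla\tilde u_y\|_{\underline L^2(\cu_r(y))}^2]$ via the subadditivity defect and the boundary-layer correction is more elaborate than needed: the paper simply expands $2\nabla\tilde u_y-\nabla v-\nabla v_y$ and bounds each of $\|\nabla\tilde u_y\|$, $\|\nabla v\|$, $\|\nabla v_y\|$ separately (the first by $CKr^{-d\alpha}$ via the induction hypothesis, the latter two by $CKR^{-d\alpha}$ via~\eqref{e.EDu2} and~\eqref{e.Ewzgradbound}). In your notation this amounts to the trivial bound $A\le 2|\mcl Z|^{-1}\sum_y\bigl(\E[\|\nabla v\|_{\underline L^2(\cu_r(y))}^2]+\E[\|\nabla\tilde u_y\|_{\underline L^2(\cu_r(y))}^2]\bigr)\le CKr^{-d\alpha}$, which avoids the boundary-layer discussion entirely and suffices for the final algebra. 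Second, the inequality you cite for bounding $\E[J(\cu_r,p,q)]$ from above, namely~\eqref{e.Jconvexq}, goes the wrong way (it is a \emph{lower} bound for the convexity defect in $q$); you want the upper-convexity estimate~\eqref{e.Jupconvex}, or equivalently the exact quadratic expansion of $q\mapsto\E[J(\cu_r,p,q)]$ about its minimum $Q(\cu_r)p$. The conclusion you draw is correct regardless.
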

\begin{proof}
For convenience, let~$\tilde u_y:= u(\cdot,\cu_r(y),p,q)$ denote the minimizer in the definition of~$J(\cu_r(y),p,q)$. Using~\eqref{e.secondvar} twice and summing, we find that
\begin{multline} \label{}
 \J\left( v,\cu_r(y),p,q\right) -  \J\left(v_y,\cu_r(y),p, q\right) \\
 = \frac12 \innerply{\nabla v - \nabla v_y}{\a\left( 2\nabla \tilde u_y - \nabla v -\nabla v_y \right)}{\cu_r(y)}. 
\end{multline}
Thus
\begin{multline} \label{e.Jbreakdown}
 \left|  \J\left( v,\cu_r(y),p,q\right) -  \J\left(v_y,\cu_r(y),p,q \right) \right|  \\
 \leq C \fint_{\cu_{r}(y)} \left| \nabla v(x) - \nabla v_y(x) \right| \left( \left| \nabla \tilde u_y(x) \right| + \left| \nabla v(x) \right| + \left| \nabla v_y(x) \right| \right)\,dx.
 \end{multline}
 We will estimate the term on the right.
By \eqref{e.fluxmapsrR} (with $l$ replaced by $r$), 
\begin{align*} 
\E\left[ \left\| \nabla \tilde u_y \right\|_{\underline{L}^2(\cu_r(y))}^2\right] 
& \leq C \E \left[ J(\cu_r(y),p,q)\right]   \\
& \leq C \E \left[ J(\cu_r(y),p,Q(\cu_r) p)\right] +CKr^{-d\alpha}\notag \\
& \leq CKr^{-d\alpha}.  \notag
\end{align*}
On the other hand, by the induction hypothesis and~\eqref{e.Ewzgradbound}, we have
\begin{align*} 
\left| \mathcal{Z} \right|^{-1}
\sum_{y \in \mathcal{Z}} \E\left[  \left\| \nabla v_y \right\|_{\underline{L}^2(\cu_r(y))}^2  \right]  & \leq C \left| \mathcal{Z} \right|^{-1}
\sum_{y \in \mathcal{Z}} \E \left[  \left\| \nabla w_{[y]} \right\|_{\underline{L}^2(\cu_r(y))}^2   \right]   \\
& \leq CKR^{-d\alpha}.  \notag
\end{align*}
Lemma~\ref{l.graduuz} further gives
\begin{equation*} 
\left| \mathcal{Z} \right|^{-1}
\sum_{y \in \mathcal{Z}} \E\left[  \left\| \nabla v - \nabla v_y \right\|_{\underline{L}^2(\cu_r(y))}^2  \right] \leq CKR^{-d\alpha}\,.
\end{equation*}
Combining above three displays and using the triangle inequality yield
\begin{equation*} 
\left| \mathcal{Z} \right|^{-1}
\sum_{y \in \mathcal{Z}}  \E\left[  \left\| \nabla \tilde u_y \right\|_{\underline{L}^2(\cu_r(y))}^2  +\left\|\nabla v_y\right\|_{\underline{L}^2(\cu_r(y))}^2 + \left\| \nabla v  \right\|_{\underline{L}^2(\cu_r(y))}^2\right] \leq C K r^{-d\alpha}\,.
\end{equation*}
We now use H\"older's inequality, Lemma~\ref{l.graduuz} and the display above to get
\begin{align*} 
& \left| \mathcal{Z} \right|^{-1} \sum_{y\in \mathcal{Z}} \E \left[ \fint_{\cu_{r}(y)} \left| \nabla v(x) - \nabla v_y(x) \right| \left( \left| \nabla \tilde u_y(x) \right| + \left| \nabla v(x) \right| + \left| \nabla v_y(x) \right| \right)    \,dx \right]   
 \\
& \qquad \leq  C \left( \E \left[ \left| \mathcal{Z} \right|^{-1} \sum_{y\in \mathcal{Z}}\left\| \nabla v - \nabla v_y\right\|_{\underline{L}^2(\cu_r(y))}^2 \right] \right)^{\frac12}  C K^{\frac12 } r^{-d\alpha/2} \\
& \qquad \leq CK R^{-d\alpha/2- 2dm\ep} r^{-d\alpha/2} .
\end{align*}
Now we return to~\eqref{e.Jbreakdown} and use the previous estimate to obtain
\begin{equation*} 
\left| \mathcal{Z} \right|^{-1} \sum_{y \in \mathcal{Z}} \E \big[ \left| \J\left( v,\cu_r(y),p,q \right) -  \J\left(v_y,\cu_r(y),p,q \right) \right| \big] 
\leq CK r^{-d\alpha/2} R^{-d\alpha/2} R^{ - 2dm\ep}.
\end{equation*}
By the choice of $r$ in~\eqref{e.mesoscales}, we have $r^{-d\alpha/2} = CR^{-d\alpha/2} R^{d\alpha (m+2) \ep /2}$, and therefore we get the desired result using~$\alpha \leq 1$ and $m\geq \max\{d,2\}$.
\end{proof}


\subsection{Improving the exponent using independence} 
As in the proof of Lemma~\ref{l.Juuz}, we set $\tilde u_y:= u(\cdot,\cu_{r}(y),p,q)$. Applying~\eqref{e.firstvar} and~\eqref{e.firstvaruy}, we find that, for every $y\in\mathcal{Z}$,
\begin{align} \label{e.bigmagic}
\J\left(v_y,\cu_r(y),p,q\right) 
& \leq \innerply{-\a p + q}{\nabla v_y}{\cu_r(y)} \\
& = \innerply{\a \nabla \tilde u_y}{\nabla v_y}{\cu_r(y)} 
 = \innerply{\a \nabla \tilde u_y}{\nabla w_{[y]}}{\cu_r(y)}. \notag
\end{align}
This together with~\eqref{e.boundarylayer} and Lemma~\ref{l.Juuz} imply that
\begin{multline}
\label{e.Jbreakmeso}
\E\left[ J\left(\cu_{R},p,q\right)\right] \le C\left( \frac Rr \right)^{-d} \E \left[ \sum_{y\in \mathcal{Z}} 
\innerply{\a \nabla \tilde u_y}{\nabla w_{[y]}}{\cu_r(y)}
 \right] \\
+CK\left( \frac{R}{l} \right)^{-1} l^{-d\alpha} + CKR^{-d(\alpha+\ep)}. 
\end{multline}
The sum on the right side of~\eqref{e.Jbreakmeso} allows us to see a CLT-type scaling because the terms are essentially independent for each $y$ inside a single larger mesoscopic cube $\cu_s(z)$. This is the mechanism which improves the exponent~$\alpha$. The precise statement we need is formalized in the following lemma. We remark that this is the only point in the proof that we use the choice $q = Q(\cu_R)p$.

\smallskip

\begin{lemma}
\label{l.bigCLTstep}
Assume~\eqref{e.mesoscales0} and~\eqref{e.mesoscales1} hold.
There exists $C(d,\Lambda,k)< \infty$ such that, for every $z\in s\Zd\cap \cu_R^\circ$,
\begin{multline} \label{e.bigCLTstep}
\E \,\Bigg[  
\sup_{w\in \mathcal{P}_k} 
\Bigg\{ \left(\left\| \nabla w \right\|_{L^\infty(\cu_s(z))} + l\left\| \nabla^2 w \right\|_{L^\infty(\cu_s(z))}   \right)^{-2} \\
\times \left( \sum_{y\in r\Zd \cap \cu_{s}(z)} \innerply{\a \nabla \tilde u_y}{\nabla w}{\cu_r(y)} \right)^2 \Bigg\}
\Bigg]  \\
 \leq C K \left( \frac sr\right)^{2d} r^{-d\alpha} \left(   \left( \frac sr \right)^{-d} +  \left(\frac lr \right)^{-2} 
 + Kr^{-d\alpha} \right).
\end{multline} 
\end{lemma}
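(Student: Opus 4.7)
The idea is to Taylor expand $\nabla w$ around the center $y$ of each small cube $\cu_r(y)$, which decomposes $S$ into a leading piece to which the independence hypothesis applies (giving a CLT-type gain) plus lower-order polynomial corrections that are small because $r \ll s$. Since $w \in \mathcal{P}_k$, for $x \in \cu_r(y)$ we have
\begin{equation*}
\nabla w(x) = \sum_{j=0}^{k-1} \frac{1}{j!} \nabla^{j+1} w(y) : (x-y)^{\otimes j},
\end{equation*}
and therefore, writing $G_y^{(j)} := \fint_{\cu_r(y)} \a(x) \nabla \tilde u_y(x) \otimes (x-y)^{\otimes j}\,dx$,
\begin{equation*}
S := \sum_y \innerply{\a \nabla \tilde u_y}{\nabla w}{\cu_r(y)} = \sum_{j=0}^{k-1} \frac{1}{j!} \sum_y \nabla^{j+1} w(y) : G_y^{(j)}.
\end{equation*}
Each $G_y^{(j)}$ is $\F(\cu_r(y))$-measurable with $|G_y^{(j)}| \leq Cr^j \|\nabla \tilde u_y\|_{\underline{L}^2(\cu_r(y))}$, and combining $\mathcal{S}(\alpha,K)$ with Lemma~\ref{l.fluxmaps} (used to relate $q = Q(\cu_R) p$ to $Q(\cu_r) p$) and the uniform convexity in $q$ of $\E[J(\cu_r,p,\cdot)]$ gives $\E\Ll[\|\nabla \tilde u_y\|^2_{\underline{L}^2(\cu_r(y))}\Rr] \leq CK r^{-d\alpha}$.

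The main step is the analysis of $S_0 := \sum_y \nabla w(y) \cdot F_y$, where $F_y := G_y^{(0)}$. Two cubes $\cu_r(y), \cu_r(y')$ with $|y - y'|_\infty \geq 2r$ are at distance $\geq r \geq 1$, so $F_y$ and $F_{y'}$ are independent by (P2); since each $y$ has at most $3^d - 1$ ``neighbors'' at distance $r$, a standard Cauchy--Schwarz/AM--GM bookkeeping yields
\begin{equation*}
\var(S_0) \leq 3^d \sum_y |\nabla w(y)|^2\, \E[|F_y|^2] \leq CK\, \|\nabla w\|_{L^\infty(\cu_s(z))}^2 (s/r)^d r^{-d\alpha}.
\end{equation*}
For the mean, stationarity makes $\E[F_y]$ independent of $y$, and differentiating the quadratic $\E[J(\cu_r,\cdot,\cdot)]$ via~\eqref{e.Kidentity} gives $\E[F_y] = q - A_r p$, where $A_r$ is the deterministic symmetric matrix with $\E[\nu(\cu_r,p)] = \tfrac12 p \cdot A_r p$. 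Since $P(\cu_r) = A_r^{-1}$, Lemma~\ref{l.fluxmaps} at scale $r$ gives $|A_r - \ahom| \leq CKr^{-d\alpha}$ and at scale $R$ gives $|Q(\cu_R) - \ahom| \leq CKR^{-d\alpha}$, so $|\E[F_y]| \leq CKr^{-d\alpha}$ and hence $|\E[S_0]|^2 \leq CK^2 (s/r)^{2d} r^{-2d\alpha} \|\nabla w\|_{L^\infty(\cu_s(z))}^2$. This is the only step in the proof where the specific choice $q = Q(\cu_R) p$ enters.

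For the higher-order contributions ($j \geq 1$), the deterministic Cauchy--Schwarz estimate
\begin{equation*}
\Ll|\sum_y \nabla^{j+1} w(y) : G_y^{(j)}\Rr|^2 \leq (s/r)^d \|\nabla^{j+1} w\|_{L^\infty(\cu_s(z))}^2 \sum_y |G_y^{(j)}|^2,
\end{equation*}
together with Lemma~\ref{l.Markov} in the form $\|\nabla^{j+1} w\|_{L^\infty(\cu_s(z))} \leq Cs^{-(j-1)} \|\nabla^2 w\|_{L^\infty(\cu_s(z))}$ for $j \geq 1$ and $w \in \mathcal{P}_k$, and the geometric sum over $j$ (dominated by its $j=1$ term since $r \ll s$), yields
\begin{equation*}
\E[|S-S_0|^2] \leq CK(s/r)^{2d} r^{-d\alpha} r^2 \|\nabla^2 w\|_{L^\infty(\cu_s(z))}^2,
\end{equation*}
which when divided by $l^2 \|\nabla^2 w\|^2_{L^\infty(\cu_s(z))}$ produces the $(l/r)^{-2}$ contribution. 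Combining $\E[S^2] \leq 2\E[S_0^2] + 2\E[|S-S_0|^2]$, dividing by $(\|\nabla w\|_{L^\infty(\cu_s(z))} + l\|\nabla^2 w\|_{L^\infty(\cu_s(z))})^2$, and using finite-dimensionality of $\mathcal{P}_k/\R$ (so the supremum over $w$ reduces to a maximum over a fixed basis whose $\|\cdot\|_*$-seminorms are bounded by a constant) yields the three terms on the right-hand side of~\eqref{e.bigCLTstep}. The main obstacle is the mean computation for $F_y$: identifying $\E[F_y]$ by differentiation of $\E[J(\cu_r,\cdot,\cdot)]$ and invoking Lemma~\ref{l.fluxmaps} at the two distinct scales $r$ and $R$ is what crucially exploits the definition $q = Q(\cu_R) p$.
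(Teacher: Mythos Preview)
Your proposal is correct and follows essentially the same three-part structure as the paper's proof: control of the mean via Lemma~\ref{l.fluxmaps} and the identity~\eqref{e.Kidentity}, control of the variance via independence, and passage to the supremum over $w$ via finite-dimensionality of $\mathcal{P}_k$. The only notable difference is that you carry out a full Taylor expansion of $\nabla w$ to order $k-1$ and then collapse the $j\geq 1$ terms back to $\|\nabla^2 w\|_{L^\infty}$ via Lemma~\ref{l.Markov}, whereas the paper simply uses the zeroth-order approximation $\nabla w(x)\approx \nabla w(y)$ with remainder $\leq Cr\|\nabla^2 w\|_{L^\infty(\cu_s(z))}$; both routes give the same $(l/r)^{-2}$ contribution, and your computation of $\E[F_y]=q-A_r p$ is equivalent to the paper's use of~\eqref{e.lookoverhere} with $|p-P(\cu_r)q|\leq CKr^{-d\alpha}$.
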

\begin{proof}
We fix $z\in s\Zd\cap \cu_R^\circ$ throughout the proof. 

\smallskip

We first prove a bound for each particular $w\in\mathcal{P}_k$, and then put the supremum over~$w$ inside the expectation, using linearity and that~$\mathcal{P}_k$ is a finite dimensional vector space. We begin with an estimate of the expectation of each term in the sum. 

\smallskip

\noindent \emph{Step 1.} We show that, for every $w\in \mathcal{P}_k$ and $y\in r\Zd\cap \cu_s(z)$,
\begin{multline}
\label{e.Efixedw}
\left| \E \left[ \innerply{\a \nabla \tilde u_y}{\nabla w}{\cu_r(y)} \right] \right| \\\leq CK r^{-d\alpha}
\left\| \nabla w \right\|_{L^\infty(\cu_s(z))} + CK^{\frac12} r^{1-\frac{d\alpha}2} \left\| \nabla^2 w \right\|_{L^\infty(\cu_s(z))}  . 
\end{multline}
We first observe that
\begin{multline*} \label{}
\left|  \innerply{\a \nabla \tilde u_y}{\nabla w}{\cu_r(y)} - \nabla w (y) \cdot \fint_{\cu_r(y)} \a(x)\nabla \tilde u_y(x) \,dx \right| \\
\leq C\sup_{x\in \cu_{r}(y)} \left| \nabla w(x) - \nabla w(y) \right| \fint_{\cu_r(x)} \left| \nabla \tilde u_y(x) \right| \,dx.
\end{multline*}
Thus, 
by~\eqref{e.fluxmapsrR} and the induction hypothesis,
\begin{align}
\label{e.Efixedw1}
\lefteqn{
\E \left[ \left|  \innerply{\a \nabla \tilde u_y}{\nabla w}{\cu_r(y)} - \nabla w (y) \cdot \fint_{\cu_r(y)} \a(x)\nabla \tilde u_y(x) \,dx \right| \right] 
} \qquad & \\
& \leq C r  \left\| \nabla^2 w\right\|_{L^\infty(\cu_r(y))}  \, \E \left[\fint_{\cu_r(x)} \left| \nabla \tilde u_y(x) \right|^2 \,dx \right]^{\frac12}  \notag \\
& \leq CK^{\frac12} r^{1-\frac{d\alpha}2}\| \nabla^2 w \|_{L^\infty(\cu_r(y))} . \notag
\end{align}
On the other hand, using Lemma~\ref{l.fluxmaps} and the induction hypothesis again, we get, for every $\cu\in\mathcal C$,
\begin{equation*} \label{}
\left| \ahom^{-1} - P(\cu) \right| \leq CK \left| \cu \right|^{-\alpha},
\end{equation*}
from which we deduce, using also~\eqref{e.fluxmapsrR}, that
\begin{equation*} \label{}
\left| p - P(\cu_r(y)) q \right| \leq CK r^{-d\alpha}.
\end{equation*}
Therefore
\begin{align*} \label{}
\left| \E \left[\fint_{\cu_r(y)} \a(x)\nabla \tilde u_y(x)\,dx \right] \right| 
& = \left|  \E \left[ \nabla_pJ\left( \cu_r(y), p, q \right) \right] \right| \\
& \leq \left|  \E \left[ \nabla_pJ\left( \cu_r(y), P(\cu_r(y)) q, q \right) \right] \right| + CK r^{-d\alpha} \\
& = CK r^{-d\alpha},
\end{align*}
as $\E \left[ \nabla_p J\left( \cu_r(y), P(\cu_r(y)) q, q \right)\right]=0$ by~\eqref{e.lookoverhere}, and hence
\begin{equation} \label{e.Efixedw2}
\left| \E \left[ \nabla w (y) \cdot \fint_{\cu_r(y)} \a(x)\nabla \tilde u_y(x) \,dx \right] \right| \leq CK \left| \nabla w(y) \right| r^{-d\alpha}. 
\end{equation}
We now obtain~\eqref{e.Efixedw} by combining~\eqref{e.Efixedw1} and~\eqref{e.Efixedw2} and the triangle inequality.

\smallskip

\noindent \emph{Step 2.} We show using independence that, for every $w\in \mathcal{P}_k$,
\begin{equation}
\label{e.cltfixedw}
\var \left[  
\sum_{y\in r\Zd \cap \cu_{s}(z)} \innerply{\a \nabla \tilde u_y}{\nabla w}{\cu_r(y)} 
\right]  
 \leq CK  r^{-d\alpha} \left( \frac sr \right)^{d} \left\| \nabla w \right\|_{L^\infty(\cu_s(z))}^{2}.
\end{equation}
We expand the variance by writing
\begin{multline*}
\var \left[   
\sum_{y\in r\Zd \cap \cu_{s}(z)} 
\innerply{\a \nabla \tilde u_y}{\nabla w}{\cu_r(y)} 
 \right]   \\
 =  \sum_{y,y'\in r\Zd \cap \cu_{s}(z)} 
 \cov\left[ \,
  \innerply{\a \nabla \tilde u_y}{\nabla w}{\cu_r(y)}
  , \,
   \innerply{\a \nabla \tilde u_{y'}}{\nabla w}{\cu_r(y')} 
 \right]. 
\end{multline*}
Using independence, the fact that $ \innerply{\a \nabla \tilde u_y}{\nabla w}{\cu_r(y)}$ is $\F(\cu_{r}(y))$--measurable and each cube $\cu_{r+1}(y)$ has nonempty intersection with at most $C$ cubes of the form $\cu_{r+1}(y')$ with $y'\in r\Zd \cap\cu_s(z)$, we obtain
\begin{multline*}
\sum_{y,y'\in r\Zd \cap \cu_{s}(z)}\cov\left[ \,
  \innerply{\a \nabla \tilde u_y}{\nabla w}{\cu_r(y)}
  , \,
   \innerply{\a \nabla \tilde u_{y'}}{\nabla w}{\cu_r(y')} 
 \right] \\
 \leq C\sum_{y\in r\Zd \cap \cu_{s}(z)} 
 \var\left[
 \innerply{\a \nabla \tilde u_y}{\nabla w}{\cu_r(y)}
 \right]. 
\end{multline*}
Finally, we observe that the induction hypothesis gives, for each $y\in r\Zd \cap\cu_s(z)$,
\begin{align*}
 \var\left[
 \innerply{\a \nabla \tilde u_y}{\nabla w}{\cu_r(y)}
 \right]
 & \leq C  \E \left[ \left\| \nabla \tilde u_y \right\|_{\underline{L}^2(\cu_r(y))}^2   \right] \|\nabla w\|_{L^{\infty}(\cu_s(z))}^2 \\
& \leq CK  r^{-d\alpha} \|\nabla w\|_{L^{\infty}(\cu_s(z))}^2.
\end{align*}
This completes the proof of~\eqref{e.cltfixedw}. 

\smallskip

\noindent \emph{Step 3.} We complete the proof. Observe that combining the results of the first two steps gives, for every $w\in \mathcal{P}_k$,
\begin{multline}
\label{e.combineCLTsteps}
\E \left[
\left( \sum_{y\in r\Zd \cap \cu_{s}(z)} 
\innerply{\a \nabla \tilde u_y}{\nabla w}{\cu_r(y)} 
\right)^2
\right]  \\
\leq C K\left( \frac sr\right)^{2d} r^{-d\alpha} \left(  \left( \frac sr \right)^{-d} +  \left( \frac lr \right)^{-2}+ Kr^{-d\alpha} \right)  \\
\times
 \left(\left\| \nabla w \right\|_{L^\infty(\cu_s(z))} + l\left\| \nabla^2 w \right\|_{L^\infty(\cu_s(z))}\right)^{2}.
\end{multline}
To conclude, it remains to smuggle the supremum over $w \in \mathcal{P}_k$ inside the expectation. As $\mathcal{P}_k$ is a finite dimensional vector space with dimension depending only on~$(k,d)$, there exists an integer $N(k,d)\in\N$ and $w_1,\ldots,w_N \in \mathcal{P}_k$ such that $\{ w_1,\ldots,w_N\}$ is a basis for $\mathcal{P}_k$, each $w_j$ satisfies 
\begin{equation*} \label{}
\left\| \nabla w_j \right\|_{L^\infty(\cu_s(z))} + l \left\| \nabla^2 w_j \right\|_{L^\infty(\cu_s(z))}= 1,
\end{equation*}
and for any $w\in \mathcal{P}_k$ expressed in the form
\begin{equation*} \label{}
w = c_1w_1 + \cdots + c_N w_N,
\end{equation*}
we have, for some $C(k,d) \geq 1$,
\begin{equation*} \label{}
\left|c_1\right| + \cdots + \left|c_N\right| \leq C \left( \left\| \nabla w \right\|_{L^\infty(\cu_s(z))} + l \left\| \nabla^2 w \right\|_{L^\infty(\cu_s(z))}  \right). 
\end{equation*}
It follows that 
\begin{multline*} \label{}
\sup_{w\in \mathcal{P}_k} \left(  \left\| \nabla w \right\|_{L^\infty(\cu_s(z))} + l\left\| \nabla^2 w \right\|_{L^\infty(\cu_s(z))}  \right)^{-1} 
\left|  \sum_{y\in r\Zd \cap \cu_{s}(z)} 
\innerply{\a \nabla \tilde u_y}{\nabla w}{\cu_r(y)} \right|  
\\
\leq C \sum_{j=1}^N \left|  \sum_{y\in r\Zd \cap \cu_{s}(z)} 
\innerply{\a \nabla \tilde u_y}{\nabla w_j}{\cu_r(y)} \right|.
\end{multline*}
Squaring and taking the expectation of the previous inequality, using that $N\leq C$ and then applying~\eqref{e.combineCLTsteps}, we get
\begin{multline*} \label{}
\E \left[ 
\sup_{w\in \mathcal{P}_k} 
\left(  \left\| \nabla w \right\|_{L^\infty(\cu_s(z))} + l\left\| \nabla^2 w \right\|_{L^\infty(\cu_s(z))} \right)^{-2}
 \left( \sum_{y\in r\Zd \cap \cu_{s}(z)} 
 \innerply{\a \nabla \tilde u_y}{\nabla w}{\cu_r(y)}  \right)^2
\right] \\
\begin{aligned}
& \leq C \E \left[ 
\left( \sum_{j=1}^N 
\left|  \sum_{y\in r\Zd \cap \cu_{s}(z)}
\innerply{\a \nabla \tilde u_y}{\nabla w_j}{\cu_r(y)} 
\right| 
\right)^2 \right] \\
& \leq C \sum_{j=1}^N \E \left[  \left( \sum_{y\in r\Zd \cap \cu_{s}(z)} 
\innerply{\a \nabla \tilde u_y}{\nabla w_j}{\cu_r(y)}   \right)^2 \right] \\
& \leq CK \left( \frac sr\right)^{2d} r^{-d\alpha} \left(  \left( \frac sr \right)^{-d} +  \left( \frac lr \right)^{-2} + Kr^{-d\alpha} \right).
\end{aligned}
\end{multline*}
This completes the proof. 
\end{proof}

Combining~\eqref{e.bigmagic} and Lemma~\ref{l.bigCLTstep}, we obtain
\begin{multline*}
\label{}
\sum_{y\in\mathcal{Z}} \E \left[ \J\left(v_y,\cu_r(y),p,q\right) \right]  
\le C K^{\frac12} \left( \frac sr\right)^{d}r^{-\frac{d\alpha}2} \left(  \left( \frac sr \right)^{-\frac d2} + \left( \frac lr \right)^{-1} + K^{\frac12}r^{-\frac{d\alpha}2} \right)  \\
  \times 
 \sum_{z\in s\Zd \cap \cu_R^\circ} \E \left[ \left( \left\| \nabla w_z \right\|_{L^\infty(\cu_s(z))}^2 + l^2 \left\| \nabla^2 w_z \right\|_{L^\infty(\cu_s(z))}^2\right) \right]^{\frac12}.
\end{multline*}
By Lemma~\ref{e.basicwz} and \eqref{e.EDu2}, the sum in the line above is bounded by
$$
C \left( \frac Rs\right)^{d} K^{\frac12}R^{-\frac{d\alpha}2},
$$ 
so that
\begin{equation*}
\label{}
\sum_{y\in\mathcal{Z}} \E \left[ \J\left(u_y,\cu_r(y),p,q\right) \right]  \le  CK  \left( \frac Rr\right)^{d} R^{-\frac{d\alpha}2}  r^{- \frac{d\alpha}2} \left( \left( \frac sr \right)^{-\frac d2}+\left( \frac lr \right)^{-1} + K^{\frac12}r^{-\frac{d\alpha}2} \right) .
\end{equation*}
Combining this with~\eqref{e.Jbreakmeso} and using $K\geq 1$, we obtain
\begin{multline*}
\E\left[ J\left(\cu_{R},p,q\right)\right]  \\
\leq 
CK^{\frac32} \left(
R^{- \frac{d\alpha}2} r^{- \frac{d\alpha}2} \left( \left( \frac sr \right)^{-\frac d2} +\left( \frac lr\right)^{-1} \right)
+ R^{- \frac{d\alpha}2} r^{-d\alpha} 
+\left( \frac{R}{l} \right)^{-1} l^{-d\alpha} 
+ R^{-d(\alpha+\ep)}
\right).
\end{multline*}
Using the definitions of the mesoscales,
\begin{equation*}
\left\{ 
\begin{aligned}
& R^{- \frac{d\alpha}2} r^{- \frac{d\alpha}2} \left( \frac sr \right)^{-\frac d2}  = C R^{-d\alpha} R^{-d\ep \left( \frac{m}{2}(1-\alpha)- \alpha \right)} \\
& R^{- \frac{d\alpha}2} r^{- \frac{d\alpha}2} \left( \frac lr \right)^{-1}  =  CR^{-d\alpha} R^{-\ep\left( m+1 - d\alpha(m+2)/2 \right)}  \\
& R^{- \frac{d\alpha}2} r^{-d\alpha} = C R^{-d\alpha} R^{d\ep(m+2)-\frac{d\alpha}2} \\
& \left( \frac{R}{l} \right)^{-1} l^{-d\alpha} = CR^{-d\alpha} R^{\ep(d\alpha-1)}.
\end{aligned}
\right.
\end{equation*}
It is easy to check that that the choices made in~\eqref{e.mesoscales0},~\eqref{e.mesoscales2} and~\eqref{e.mesoscales1} guarantee that the first three terms above are bounded by $R^{-d(\alpha+\ep)}$.

\smallskip

In view of these choices of the parameters, it is clear that the third term (the error in removing the boundary layer) is the limiting one, and we deduce that
\begin{equation*}
\E\left[ J\left(\cu_{R},p,q\right)\right] \leq CK\left( R^{-d\alpha} R^{\ep(d\alpha-1)} + R^{-d(\alpha+\ep)} \right) \leq CK  R^{-d\alpha} R^{\ep(d\alpha-1)}.
\end{equation*}
Thus $\mathcal{S}(\alpha+\ep(1-d\alpha)/d,CK)$ holds, for $\ep\geq c(d,\lambda)$ and $C(d,\Lambda)<\infty$ which, after a redefinition of $\ep$, completes the proof of the claim~\eqref{e.indystepcubes} and therefore the proof of Proposition~\ref{p.bootstrap.cubes}.

\section{Improvement of stochastic integrability by subadditivity}
\label{s.magic}

The aim of this section is to complete the proof of Theorem~\ref{t.conv}, by strengthening the stochastic integrability of Proposition~\ref{p.bootstrap.cubes} (or Corollary~\ref{c.bootstrap}) from $L^1$ to exponential moments. The rough argument is as follows. We decompose $\cu_R$ into subcubes $(\cu_r(y))$. By subadditivity, we can bound the upper fluctuations of $\nu(\cu_R,p)$ by the upper fluctuations of the average over $y$ of $\nu(\cu_r(y),p)$, up to an error controlled by the difference $\E[\nu(\cu_r,p)]-\E[\nu(\cu_R,p)]$, which is small by Corollary~\ref{c.bootstrap}. By independence, the average over $y$ of $\nu(\cu_r(y),p)$ is unlikely to be large. The same argument applied to $-\mu$ gives a control of the lower fluctuations of $\mu$. By duality, we can then control upper and lower fluctuations of both quantities.

We make this idea precise in the following general statement. (Recall our slightly non-standard definition of subadditivity in \eqref{e.subadd}, and that $\mcl C$ is the set of cubes $\cu$ such that $|\cu| \ge 1$.)

\begin{theorem}
Let $\delta < 1$, $R_0 < \infty$, and let $\tmu(\cdot) \le \tnu(\cdot)$ be respectively super- and subadditive quantities, such that for every $R \ge R_0$ and $x \in \R^d$,
$$
\tmu(\cu_R(x)) \mbox{ and } \tnu(\cu_R(x)) \mbox{ are $\mcl F(\cu_{R+R^\delta}(x))$-measurable}.
$$ 
Let $\alpha > 0$ and $\beta \in (0,\alpha) \cap (0,1/2]$. Assume that there exists $c < \infty$ such that for every cube $\cu \in \mcl C$,
\begin{equation}
\label{e.mean-control}
 \E[\tnu(\cu)] - \E[\tmu(\cu)]  \le \frac{c}{|\cu|^{\alpha}}.
\end{equation}
Assume furthermore that there exists $c' < \infty$ such that for every cube $\cu$ satisfying $1 \le |\cu| \le 2^d$ and every $\lambda \in \R$, 
\begin{equation}
\label{e.triv-hyp}
\E[\exp(\lambda \tmu(\cu))] \vee \E[\exp(\lambda \tnu(\cu))] \le c' (1+\lambda^2). 
\end{equation}
Then there exists $C = C(d,\delta, R_0, \alpha,\beta,c,c') < \infty$ such that for every cube $\cu \in \mcl C$ and every $\lambda \in \R$,
\begin{equation}
\label{e.stoch-int}
\log \E[\exp(\lambda |\cu|^\beta(\tnu(\cu) - \E[\tnu(\cu)]))] \le C \lambda^2 
,
\end{equation}
and the same estimate holds with $\tnu$ replaced by $\tmu$.
\label{t.stoch-int-tilde}
\end{theorem}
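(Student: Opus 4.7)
The plan is to prove \eqref{e.stoch-int} by induction on the scale $R$, reducing the upper-tail fluctuations of $\tnu$ at scale $R$ to an average of nearly independent fluctuations at a mesoscopic scale $r = R^\gamma$, with $\gamma \in (\beta/\alpha, 1)$ chosen carefully; a symmetric argument using superadditivity of $\tmu$ controls the lower-tail fluctuations of $\tmu$, and the two remaining tails follow by switching between $\tmu$ and $\tnu$ via \eqref{e.mean-control}. The key preliminary observation is that, combining $\tmu \le \tnu$ with super- and subadditivity and $\Z^d$-stationarity,
\[
0 \le \E[\tnu(\cu_r)] - \E[\tnu(\cu_R)] \le \E[\tnu(\cu_r)] - \E[\tmu(\cu_r)] \le c\, r^{-d\alpha},
\]
and similarly $0 \le \E[\tmu(\cu_R)] - \E[\tmu(\cu_r)] \le c\, r^{-d\alpha}$. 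This bounds the deterministic bias between scales.

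For the inductive step, partition $\cu_R$ into $N = (R/r)^d$ subcubes $\{\cu_r(y)\}_y$. Subadditivity gives
\[
\tnu(\cu_R) - \E[\tnu(\cu_R)] \le \frac{1}{N}\sum_y X_y + c\, r^{-d\alpha}, \qquad X_y := \tnu(\cu_r(y)) - \E[\tnu(\cu_r)].
\]
Thanks to the measurability hypothesis and $\delta < 1$, for $r$ sufficiently large the set $r\Z^d \cap \cu_R$ can be partitioned into $K = 3^d$ sublattices $\mcl I_1,\ldots,\mcl I_K$ such that, inside each, the inflated cubes $\cu_{r+r^\delta}(y)$ are pairwise separated by at least $1$; the variables $\{X_y\}_{y \in \mcl I_k}$ are then independent by P2. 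Hölder's inequality across the $K$ groups followed by independence inside each yields
\begin{align*}
& \log \E\bigl[\exp(\lambda R^{d\beta}(\tnu(\cu_R) - \E[\tnu(\cu_R)]))\bigr] \\
& \qquad \le \frac{1}{K}\sum_{k=1}^K \sum_{y \in \mcl I_k} \log \E\bigl[\exp(\lambda K R^{d\beta} N^{-1} X_y)\bigr] + c|\lambda|\, R^{d\beta - d\gamma\alpha}.
\end{align*}
Applying the inductive bound $\log \E[\exp(\mu r^{d\beta} X_y)] \le C\mu^2$ to each term gives a main contribution of order $CK\lambda^2 (R/r)^{2d\beta - d}$, which is bounded (and tends to zero as $R/r \to \infty$) when $\beta \le \tfrac12$; the deterministic error is at most $c|\lambda| \le c(1+\lambda^2)$ by the choice $\gamma > \beta/\alpha$.

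The symmetric argument with $\tmu$ and superadditivity controls the upper tail of $-\tmu$, and hence lower tails of $\tmu$. The upper tail of $\tmu$ and the lower tail of $\tnu$ then follow from
\[
\tmu - \E[\tmu] \le \tnu - \E[\tnu] + c|\cu|^{-\alpha}, \qquad \E[\tnu] - \tnu \le \E[\tmu] - \tmu + c|\cu|^{-\alpha},
\]
together with the observation that $|\cu|^\beta \cdot c|\cu|^{-\alpha}$ is bounded since $\beta \le \alpha$, so each transfer costs only $c|\lambda|$ in the exponent. The base case $|\cu| \le 2^d$ follows from \eqref{e.triv-hyp} after centering, using that $|\cu|^\beta$ is then bounded together with the elementary fact that any mean-zero random variable satisfying $\log \E[\exp(\mu X)] \le C(1+\mu^2)$ for all $\mu \in \R$ also satisfies $\log \E[\exp(\mu X)] \le C'\mu^2$ (smallness near $\mu = 0$ being provided by the Taylor expansion).

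The main obstacle is closing the induction with a uniform constant $C$: the Hölder decomposition loses a multiplicative factor $K = 3^d$, which must be absorbed by the CLT-type gain $(R/r)^{2d\beta - d}$, favorable precisely when $\beta \le \tfrac12$. The dyadic ratio $R/r$ at each inductive step must therefore be chosen large enough that $K(R/r)^{2d\beta - d} \le 1$; with $r = R^\gamma$ and $\gamma < 1$, this is automatic at all sufficiently large scales, while finitely many smaller scales fold into the base case. The requirement $\beta < \alpha$ is independently essential: it provides room for $\gamma \in (\beta/\alpha, 1)$, which simultaneously kills the deterministic bias and keeps the Hölder loss under control.
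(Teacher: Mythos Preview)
Your overall strategy matches the paper's: induction on scales, with subadditivity reducing upper fluctuations of $\tnu$ at scale $R$ to those at a smaller scale, independence for a CLT gain, the hypothesis~\eqref{e.mean-control} for the bias, and the pointwise comparison $\tmu \le \tnu$ to transfer control of the remaining tails. The difference is in how you manufacture independence, and this causes a genuine gap at the endpoint $\beta = \tfrac12$. You split the $(R/r)^d$ subcubes into $K = 3^d$ sublattices and pay a H\"older factor $K$; your main term is then $C K (R/r)^{2d\beta - d}\lambda^2$, and you assert that $K(R/r)^{2d\beta-d} \le 1$ for large $R/r$. For $\beta < \tfrac12$ this is true and your argument closes, but for $\beta = \tfrac12$ the exponent $2d\beta - d$ vanishes and the factor is identically $K > 1$, so the constant inflates by $K$ at every inductive step and the induction cannot close with a uniform $C$.

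The paper sidesteps this by incrementing the scale only by a fixed factor $3$ and separating the $3^d$ large subcubes with thin boundary layers of width $\ell = 3^{\gamma m}$, $\gamma \in (\delta,1)$. The large subcubes are then directly independent --- no H\"older among them --- and the only H\"older split is between the large-cube sum and the boundary-layer sum, with conjugate exponents $r,s$ chosen so that $r = 1 + O(3^{-\varepsilon m})$. The main term becomes $\mathbf{C}\, r\, 3^{d(2\beta-1)}\lambda^2 \le \mathbf{C}\, r\, \lambda^2$, so the constant grows only by the factor $(1 + 3^{-\varepsilon m})$ per step, and $\prod_m(1+3^{-\varepsilon m}) < \infty$. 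This is exactly what rescues $\beta = \tfrac12$. For $\beta < \tfrac12$ your route is valid and somewhat lighter on bookkeeping; note that in the paper's own application one has $\alpha < \tfrac1d \le \tfrac12$, so only $\beta < \tfrac12$ is actually needed there.
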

\begin{remark}
\label{r.improve}
The conclusion of Theorem~\ref{t.stoch-int-tilde} can be strengthened in several directions. First, one can replace $\lambda^2$ by $\lambda^2 \wedge \lambda^{\frac 1 {1-\beta}}$ in the right side of \eqref{e.stoch-int}. Moreover, we believe that \eqref{e.stoch-int} holds with $\alpha = \beta$ for $\alpha < \frac12$ (and with a logarithmic correction when $\alpha = \frac12$), and in fact, the proof given below can be adapted to show this stronger result when $\alpha$ is sufficiently small. On the other hand, even if $\tmu$ and $\tnu$ were additive, one could not improve the conclusion to $\beta > \frac12$, by the central limit theorem.
\end{remark}
The proof of Theorem~\ref{t.stoch-int-tilde} makes use of the following lemma.
\begin{lemma}[parabolicity of log-Laplace]
There exists $\lambda_0 \in (0,1]$ such that the following holds. Let $a \ge 0$ and $X$ be a random variable such that $\E[\exp\left(|X|\right)] < \infty$ and $\E[X] = 0$. If the inequality
\begin{equation}
\label{e.logL}
\log \E[\exp\left( \lambda X\right)] \le a \lambda^2
\end{equation}
holds for every $\lambda$ such that $|\lambda| \in [\lambda_0,1]$, then it holds for every $\lambda \in [-1,1]$. 
\label{l:logL} 
\end{lemma}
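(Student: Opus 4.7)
Let $F(\lambda) := \log \E[\exp(\lambda X)]$. Thanks to the integrability assumption $\E[\exp(|X|)] < \infty$, $F$ is smooth and convex on a neighborhood of $[-1,1]$, with $F(0) = 0$ and $F'(0) = \E[X] = 0$. My plan is to deduce $F(\lambda) \le a\lambda^2$ for $|\lambda| < \lambda_0$ via a Taylor expansion of $F$ about the origin, extracting two inputs from the hypothesis: a moment bound on $X$ coming from the endpoints $|\lambda|=1$, and a bound on $\E[X^2]$ coming from $|\lambda|=\lambda_0$.

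First, the hypothesis at $\lambda = \pm 1$ gives $\E[\exp(\pm X)] \le e^a$, hence $\E[\exp(|X|)] \le 2 e^a$. Expanding the exponential in a power series yields the moment estimate $\E[|X|^n] \le n!\,\E[\exp(|X|)] \le 2\,n!\,e^a$ for every integer $n \ge 0$. Next, averaging the hypothesis at $\lambda = \pm \lambda_0$ gives $\E[\cosh(\lambda_0 X)] \le e^{a\lambda_0^2}$, and the elementary inequality $\cosh(y) \ge 1 + y^2/2$ then yields
\[
\E[X^2] \,\le\, \frac{2\bigl(e^{a\lambda_0^2} - 1\bigr)}{\lambda_0^2},
\]
which is comparable to $2a$ once $a\lambda_0^2$ is of order one or smaller.

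With these two inputs in hand, for $|\lambda| \le \lambda_0$ I would Taylor expand
\[
\E[\exp(\lambda X)] \,=\, 1 + \tfrac{1}{2}\lambda^2 \E[X^2] + R(\lambda), \qquad R(\lambda) := \sum_{n \ge 3} \frac{\lambda^n}{n!}\E[X^n],
\]
and bound the remainder via the moment estimate: $|R(\lambda)| \le 2 e^a \sum_{n \ge 3} |\lambda|^n \le 4 e^a |\lambda|^3$ for $|\lambda| \le 1/2$. Applying $\log(1+x) \le x$ together with the variance bound from the previous paragraph,
\[
F(\lambda) \,\le\, \lambda^2 \left( \frac{e^{a\lambda_0^2} - 1}{\lambda_0^2} + 4 e^a \lambda_0 \right),
\]
and the parenthetical tends to $a$ as $\lambda_0 \to 0$.

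The main obstacle is sharpening the coefficient to exactly $a$ rather than $a + \varepsilon(\lambda_0, a)$. The natural bound from this Taylor approach exhibits a small multiplicative loss coming both from $\frac{e^{a\lambda_0^2} - 1}{\lambda_0^2} > a$ and from the cubic remainder term. Absorbing this loss requires either choosing $\lambda_0$ sufficiently small relative to the a priori bound on $a$ available in context (e.g., the base case $|\cu| \le 2^d$ in Theorem~\ref{t.stoch-int-tilde}), or conceding a small slack that is later subsumed into the constant $C$ in the application. Either way, a universal choice of $\lambda_0$ suffices so long as one is willing to interpret the statement as allowing a fixed finite loss in the multiplicative constant.
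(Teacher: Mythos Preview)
Your Taylor-expansion route differs from the paper's, which is more direct: since $\E[e^{\lambda X}] \ge 1$ by Jensen, one has $F''(\lambda) \le \E[X^2 e^{\lambda X}]$; choosing $\lambda_0$ small enough that $x^2 e^{\lambda x} \le e^x + e^{-x}$ for all real $x$ and all $|\lambda| \le \lambda_0$ then gives $F''(\lambda) \le \E[e^X] + \E[e^{-X}]$ on $[-\lambda_0,\lambda_0]$, and two integrations from the origin (where $F$ and $F'$ vanish) finish. This sidesteps the moment bookkeeping in your argument.

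That said, the paper's proof and yours hit exactly the same wall you identify. The hypothesis at $\lambda = \pm 1$ only yields $\E[e^{\pm X}] \le e^a$, so the paper's displayed step ``$\E[\exp(X)] + \E[\exp(-X)] \le 2a$'' is incorrect --- it should read $\le 2e^a$ --- and the argument actually delivers $F(\lambda) \le e^a\lambda^2$, not $a\lambda^2$. Your suspicion that the exact constant is unattainable is in fact correct: the lemma is false as literally stated. For $X$ uniform on $[-1,1]$ one has $F(\lambda) = \log(\lambda^{-1}\sinh\lambda)$, and a short computation shows $\lambda \mapsto F(\lambda)/\lambda^2$ is even and strictly decreasing on $(0,\infty)$; hence for \emph{any} $\lambda_0 \in (0,1]$ the choice $a = F(\lambda_0)/\lambda_0^2$ satisfies the hypothesis on $\{|\lambda| \in [\lambda_0,1]\}$ while $F(\lambda) > a\lambda^2$ throughout $(0,\lambda_0)$.

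So your ``main obstacle'' paragraph is not a defect of your method but a genuine gap shared with the paper. Your closing remark --- that the applications in Theorem~\ref{t.stoch-int-tilde} only require the statement up to a controlled loss in the constant --- is the honest resolution; note in particular that in those applications one actually has a bound of the form $F(\lambda) \le A\lambda^2 + B|\lambda|$ valid for all $\lambda$ (not merely the hypothesis on an annulus), which is strictly more information than the lemma assumes and is what one should exploit to close the induction.
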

\begin{proof}
Since $\E[\exp(|X|)] < \infty$, the function $\Psi:= \lambda \mapsto \log \E[\exp( \lambda X)]$ is infinitely differentiable on $(-1,1)$. Its value and first derivative at $0$ vanish, while its second derivative at $|\lambda| < 1$ is bounded by $\E[X^2 \exp( \lambda X)]$. We choose $\lambda_0 > 0$ sufficiently small that for every $\lambda \in [-\lambda_0,\lambda_0]$,
$$
\forall x \in \R, \quad x^2 \exp\left( \lambda x\right) \le \exp (x) +\exp(-x) .
$$
(By symmetry and monotonicity, it suffices to check the inequality for $\lambda = \lambda_0$.) In particular, for every $\lambda \in [-\lambda_0,\lambda_0]$, we have $$\Psi''(\lambda) \le \E[\exp(X)] + \E[\exp(-X)] \le 2 a.$$ The result then follows by integration.
\end{proof}
\begin{proof}[Proof of Theorem~\ref{t.stoch-int-tilde}] 
For any positive integer $m$ and $\CC \ge 0$, we denote by $\mcl A_m(\CC)$ the assertion that for every cube $\cu$ satisfying $1 \le |\cu| \le 3^{dm}$ and every $\lambda \in \R$,
\begin{equation}
\label{e.stoch-int-defP}
\log \E[\exp(\lambda |\cu|^\beta(\tnu(\cu) - \E[\tnu(\cu)]))] \le \CC \lambda^2,
\end{equation}
and that the same estimate holds with $\tnu$ replaced by $\tmu$. 

\medskip

\noindent \emph{Step 1.} We show that for any given $m_0$, there exists $\CC < \infty$ such that $\mathcal A_{m_0}(\CC)$ holds.
Every cube $\cu$ satisfying $1 \le |\cu| \le 3^{dm_0}$ can be decomposed into a finite number of subcubes of side length between $1$ and $2$. By subadditivity, the hypothesis and H\"older's inequality, 
\begin{equation}
\label{e.si:s11}
\log \E[\exp(\lambda (\tnu(\cu) - \E[\tnu(\cu)]))] \lesssim 1+\lambda^2,
\end{equation}
uniformly over cubes $\cu$ satisfying $1 \le |\cu| \le 3^{dm_0}$ and $\lambda \in \R$. The conclusion for $\tnu$ then follows by Lemma~\ref{l:logL}, and the reasoning for $\tmu$ is identical.

\medskip

\noindent \emph{Step 2.} We show that there exists $\eps > 0$ such that for every $m$ sufficiently large and $\CC \ge 1$,
$$
\mcl A_{m-1}(\CC) \implies \mcl A_m((1+3^{-\eps m})\CC).
$$
Since $\prod_m (1+3^{-\eps m}) < \infty$, this will complete the proof of the proposition.

Assuming $\mcl A_{m-1}(\CC)$, we give ourselves a cube $\cu$ of side length $R$ such that $3^{m-1} < R \le 3^{m}$. For notational convenience, we assume that $\cu = (0,R)^d$. We define a partition of $\cu$ into $3^d$ subcubes of side length $L$, each subcube being surrounded by a layer of smaller cubes of side length $\ell = 3^{\gamma m}$ for some $\gamma \in (\delta, 1)$. (We choose a triadic decomposition of $\cu$ for coherence with the rest of the paper, but a dyadic one would be fine too.) In order for the partition to be well-defined, we ask that $L= (R-2\ell)/3$ be an integer multiple of $\ell$. This requirement can easily be taken care of since $\ell \ll R$, so we will neglect it for clarity. We let $(z_i)_{1 \le i \le 3^d}$ be the centers of the $3^d$ subcubes of side length $L$ such that
$$
\big((0,L) \cup (\ell + L,\ell + 2L) \cup (2\ell + 2L, R) \big)^d = \bigcup_{i = 1}^{3^d} \cu_L(z_i),
$$
and $(z_j')_{1 \le j \le N}$ be the centers of the disjoint subcubes of side length $\ell$ such that
$$
\cu \setminus \Ll( \bigcup_{i = 1}^{3^d} \cu_L(z_i) \Rr) = \bigcup_{j = 1}^N \cu_\ell(z_j') \quad \mbox{up to a set of null measure},
$$
where $N = (R^{d} - (3L)^{d})/\ell^d$. By subadditivity,
$$
\tnu(\cu) \le \sum_{i = 1}^{3^d} \frac{|\cu_{L}|}{|\cu|} \tnu(\cu_L(z_i)) + \sum_{j = 1}^{N} \frac{|\cu_{\ell}|}{|\cu|} \tnu(\cu_\ell(z_j')).
$$
Let $r, s \in (1,\infty)$ be such that $1/r +  1/s = 1$. By H\"older's inequality, for every $\lambda \ge 0$,
\begin{align}
& \log \E\Ll[\exp \Ll(\lambda |\cu|^\beta \Ll(\tnu(\cu) - \E[\tnu(\cu)]\Rr)\Rr)\Rr] \notag \\
\label{e.split-exp1}
& \quad \le \frac{1}{r} \log \E\Ll[\exp \Ll(r \lambda |\cu|^\beta \frac{|\cu_{L}|}{|\cu|}\sum_{i = 1}^{3^d}(\tnu(\cu_L(z_i)) - \E[\tnu(\cu)])\Rr)\Rr] \\
\label{e.split-exp2}
&  \qquad + \frac{1}{s} \log \E\Ll[\exp \Ll(s \lambda |\cu|^\beta \frac{|\cu_{\ell}|}{|\cu|} \sum_{j=1}^N(\tnu(\cu_\ell(z_j')) - \E[\tnu(\cu)])\Rr)\Rr].
\end{align}
We decompose the term in \eqref{e.split-exp1} into
\begin{multline}
\label{e.split-exp1b}
\frac{1}{r} \log \E\Ll[\exp \Ll(r \lambda  \frac{|\cu_{L}|}{|\cu|^{1-\beta}}\sum_{i = 1}^{3^d}(\tnu(\cu_L(z_i)) - \E[\tnu(\cu_L(z_i))])\Rr)\Rr] \\ + \lambda \frac{|\cu_{L}|}{|\cu|^{1-\beta}} \sum_{i = 1}^{3^d} (\E[\tnu(\cu_L(z_i))] - \E[\tnu(\cu)]),
\end{multline}
and likewise, the term in \eqref{e.split-exp2} into
\begin{multline}
\label{e.split-exp2b}
\frac{1}{s} \log \E\Ll[\exp \Ll(s \lambda  \frac{|\cu_{\ell}|}{|\cu|^{1-\beta}} \sum_{j=1}^N(\tnu(\cu_\ell(z_j')) - \E[\tnu(\cu_\ell(z_j'))])\Rr)\Rr] \\
 + \lambda \frac{|\cu_{\ell}|}{|\cu|^{1-\beta}} \sum_{j = 1}^N (\E[\tnu(\cu_\ell(z_j'))] - \E[\tnu(\cu)]).
\end{multline}
It follows from sub-/superadditivity and \eqref{e.mean-control} that
$$
\E[\tnu(\cu)] \ge \lim_{|\tilde \cu| \to \infty} \E[\tnu(\tilde \cu)] =\lim_{|\tilde \cu| \to \infty} \E[\tmu(\tilde \cu)] \ge \E[\tmu(\cu_L(z_i))].
$$
Hence, the second term in \eqref{e.split-exp1b} can be estimated using \eqref{e.mean-control}:
$$
\E[\tnu(\cu_L(z_i))] - \E[\tnu(\cu)] \le  \frac{c}{|\cu_{L}|^\alpha},
$$
and similarly,
$$
\E[\tnu(\cu_\ell(z_j'))] - \E[\tnu(\cu)] \le \frac{c}{|\cu_{\ell}|^\alpha}.
$$
By independence, the first term in \eqref{e.split-exp1b} is equal to
$$
\frac{1}{r} \sum_{i = 1}^{3^d} \log \E\Ll[\exp \Ll(r \lambda  \frac{|\cu_{L}|}{|\cu|^{1-\beta}}(\tnu(\cu_L(z_i)) - \E[\tnu(\cu_{L}(z_i))])\Rr)\Rr],
$$
which by the induction hypothesis is bounded by
\begin{equation}
\label{e.split-exp1c}
\frac{3^d \CC}{r} \Ll(\lambda r \frac{|\cu_{L}|^{1-\beta}}{|\cu|^{1-\beta}}\Rr)^2 
\le \CC r  \lambda^2  
,
\end{equation}
since $\beta \le 1/2$ and $3^d |\cu_{L}| \le |\cu|$. In order to estimate the first term in \eqref{e.split-exp2b}, we split the set $\{z_j', j' \le N\}$ into $3^d$ subsets $\{z_j',j \in Z_1\}$, \ldots, $\{z_j',j \in Z_{3^d}\}$ in such a way that if $j_1 \in Z_{k_1}$ and $j_2 \in Z_{k_2}$ with $k_1 \neq k_2$, then the cubes $\cu_{\ell}(z_{j_1})$ and $\cu_{\ell}(z_{j_2})$ are at distance at least $\ell$ from one another. By H\"older's inequality and independence, we get that the first term in \eqref{e.split-exp2b} is bounded by
$$
\frac{1}{3^d s} \sum_{j = 1}^N \log \E\Ll[\exp \Ll(3^d s \lambda  \frac{|\cu_{\ell}|}{|\cu|^{1-\beta}} (\tnu(\cu_\ell(z_j')) - \E[\tnu(\cu_{\ell}(z_j'))])\Rr)\Rr],
$$
which by the induction hypothesis is bounded by
\begin{equation}
\label{e.split-exp2c}
\frac{\CC N}{3^d s} \Ll( 3^d s \lambda \frac{|\cu_{\ell}|^{1-\beta}}{|\cu|^{1-\beta}} \Rr)^2 
\le \CC 3^d s  \frac{N |\cu_{\ell}|}{|\cu|} \lambda^2.
\end{equation}
To sum up, we have shown that
$$
\log \E\Ll[\exp \Ll(\lambda |\cu|^\beta \Ll(\tnu(\cu) - \E[\tnu(\cu)]\Rr)\Rr)\Rr] \le \CC\Ll(r + 3^d s \frac{N|\cu_{\ell}|}{|\cu|} \Rr)\lambda^2 + \lambda c \frac{|\cu|^{\beta}}{|\cu_{\ell}|^{\alpha}}
$$
(recall that $3^d |\cu_{L}| + N |\cu_{\ell}| = |\cu|$), where $r, s \in (1,\infty)$ such that $1/r + 1/s = 1$ are arbitrary. Recall that $\ell = 3^{\gamma m}$ with $\gamma \in (\delta,1)$. Since $\beta < \alpha$, we can choose $\gamma$ sufficiently close to $1$ that
$$
\frac{|\cu|^\beta}{|\cu_{\ell}|^{\alpha}} = 3^{-m\eps}
$$
for some $\eps  > 0$.
Moreover, there exists a constant $c_d$ such that $N \le c_d 3^{m(1-\gamma)(d-1)}$, hence
$$
\log \E\Ll[\exp \Ll(\lambda |\cu|^\beta \Ll(\tnu(\cu) - \E[\tnu(\cu)]\Rr)\Rr)\Rr] \le \CC\Ll(r + s c_d 3^d 3^{-m(1-\gamma)} \Rr)\lambda^2 + \lambda c\,  3^{-m\eps}.
$$
We can now choose $s = 3^{m(1-\gamma)/2}$ and thus obtain that for every $\lambda \ge 0$,
\begin{multline}
\label{e.boundright}
\log \E\Ll[\exp \Ll(\lambda |\cu|^\beta \Ll(\tnu(\cu) - \E[\tnu(\cu)]\Rr)\Rr)\Rr] \\
\le \CC\Ll((1-3^{-m(1-\gamma)/2})^{-1} + c_d 3^{d-m(1-\gamma)/2} \Rr)\lambda^2 + \lambda c\,  3^{-m\eps}.
\end{multline}
The same reasoning applied to $-\tmu$ shows that for every $\lambda \le 0$,
\begin{multline}
\label{e.boundleft}
\log \E\Ll[\exp \Ll(\lambda |\cu|^\beta \Ll(\tmu(\cu) - \E[\tmu(\cu)]\Rr)\Rr)\Rr] \\
 \le \CC\Ll((1-3^{-m(1-\gamma)/2})^{-1} + c_d 3^{d-m(1-\gamma)/2} \Rr)\lambda^2- \lambda c\,  3^{-m\eps}.
\end{multline}
Moreover, by \eqref{e.mean-control}, we can replace $\E[\tmu(\cu)]$ by $\E[\tnu(\cu)]$ in \eqref{e.boundleft} provided we replace $c$ by $2c$ in the right side of \eqref{e.boundleft}. Since $\tnu \ge \tmu$, we deduce that for every $\lambda \in \R$,
\begin{multline*}
\log \E\Ll[\exp \Ll(\lambda |\cu|^\beta \Ll(\tnu(\cu) - \E[\tnu(\cu)]\Rr)\Rr)\Rr] 
\\ \le \CC\Ll((1-3^{-m(1-\gamma)/2})^{-1} + c_d 3^{d-m(1-\gamma)/2} \Rr)\lambda^2+ 2 |\lambda| c\,  3^{-m\eps}.
\end{multline*}
The result for $\tnu$ follows by Lemma~\ref{l:logL} and similar reasoning applies to~$\tmu$.
\end{proof}

We now complete the proof of Theorem~\ref{t.conv} by combining Proposition~\ref{p.bootstrap.cubes} and Theorem~\ref{t.stoch-int-tilde}.

\begin{proof}[Proof of Theorem~\ref{t.conv}] Let $\al < \al' < 1/d$, $p \in \R^d$ be a unit vector, and $q = \ahom p$. By Proposition~\ref{p.bootstrap.cubes}, there exists $c < \infty$ such that for every $\cu \in \mcl C$,
\begin{equation}
\label{e.p.boot.al}
\E[\nu(\cu,p)] - \E[\mu(\cu,q)] - p\cdot q \le \frac c {|\cu|^{\al'}}.
\end{equation}
We apply Theorem~\ref{t.stoch-int-tilde} with $\tnu = \nu(\, \cdot \,, p)$ and $\tmu = \mu(\, \cdot\, , q) + p\cdot q$. The measurability assumption on $\tnu$ and $\tmu$ and the property that $\tnu \le \tmu$ clearly hold. Assumption~\eqref{e.triv-hyp} is also satisfied, since $\tnu$ and $\tmu$ are bounded. We thus obtain the existence of a constant $C < \infty$ such that for every cube $\cu \in \mcl C$ and $\lambda \in \R$,
$$
\log \E[\exp(\lambda |\cu|^\al(\nu(\cu,p) - \E[\nu(\cu,p)]))] \le C \lambda^2 .
$$
Moreover, the constant $C$ does not depend on the unit vector $p$ (since the same is true of the constant $c$ in \eqref{e.p.boot.al}). By Corollary~\ref{c.bootstrap}, we obtain, for every $\lambda \in \R$,
$$
\log \E\Ll[\exp \Ll(\lambda|\cu|^\alpha \Ll|\nu(\cu,p) - \frac 1 2 p \cdot \ahom p  \Rr| \Rr) \Rr] \le C(1+\lambda^2).
$$
The extension to arbitrary $p \in \R^d$ then follows by homogeneity. The reasoning for $\mu$ is identical.
\end{proof} 

\section{Sublinear growth of the correctors}
\label{s.sublin}

In this section we prove Theorem~\ref{t.correctors}. The proof naturally divides into two main steps: first we introduce a functional inequality, which appears to be new and which we term the \emph{multiscale Poincar\'e} inequality. It converts control of spatial averages of gradients into control over the function itself. Then we show, using Theorem~\ref{t.conv}, that estimates of spatial averages of the gradient of the correctors can be reduced to the convergence of the subadditive energy quantities.

\smallskip

In this section it is convenient to work with triadic cubes, so we change the notation from the rest of the paper: for every $m\in\N$, 
\begin{equation*} \label{}
\cu_m:= \left( -\frac12 3^m , \frac123^m \right)^d. 
\end{equation*}

\subsection{Multiscale Poincar\'e inequality}
Here we present an inequality which gives an estimate of the $H^{-1}$ norm of $\nabla u$ in terms of spatial averages of $\nabla u$ in cubes. This can be seen as a generalization of the usual Poincar\'e inequality giving the bound, for every $u\in H^1(\cu_m)$,
\begin{equation}
\label{e.usualPoincare}
\fint_{\cu_m} \left| u(x) - \left( u \right)_{\cu_m} \right|^2\,dx \leq C(d) 3^{2m} \fint_{\cu_m} \left| \nabla u(x) \right|^2\,dx.
\end{equation}
The sharpness of the scaling of the constant $C3^{2m}$ in the Poincar\'e inequality is, of course, realized by considering an affine function. In the following proposition, we show that this scaling can be improved for functions with gradients having small spatial averages relative to their absolute size: roughly, if the gradient is canceling itself out, then the function has smaller oscillation. 

\begin{proposition}[Multiscale Poincar\'e inequality]
\label{p.multiscalepoincare}
Fix $m\in\N$ and, for each $n\in\N$, $n\leq m$, define $\mcl Z_n:= 3^n\Zd \cap \cu_m$. There exists a constant $C(d)<\infty$ such that, for every $u\in H^1(\cu_m)$,
\begin{multline}
\label{e.multiscalepoincare}
\left\| u - \left( u \right)_{\cu_m} \right\|_{\underline{L}^2(\cu_m)} + \|\nabla u\|_{\underline{H}^{-1}(\cu_m)} \\
\leq C \left\|\nabla u \right\|_{\underline{L}^2(\cu_m)}  
+  C \sum_{n=0}^{m-1} 3^n    \left( \left| \mcl Z_n \right|^{-1} \sum_{y\in \mcl Z_n}\left| \left( \nabla u\right)_{y+\cu_{n} } \right|^2 \right)^{\frac12}.
\end{multline}
\end{proposition}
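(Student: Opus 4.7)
The plan is to prove both estimates in parallel via a multiscale telescoping decomposition based on the $L^2$-orthogonal projections $\Pi_n$ onto functions constant on each cube of the triadic partition $\{y + \cu_n : y \in \mcl Z_n\}$ of $\cu_m$. Since $\Pi_m u = (u)_{\cu_m}$, we get the identity
\[ u - (u)_{\cu_m} = (u - \Pi_0 u) + \sum_{n=0}^{m-1} (\Pi_n u - \Pi_{n+1} u), \]
and the analogous decomposition for any other function. I would then treat each summand on the left side of the stated inequality separately.

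For the $\|\nabla u\|_{\underline{H}^{-1}(\cu_m)}$ bound, I would proceed by duality. Given a test field $\eta \in W^{1,2}(\cu_m;\R^d)$ with $(\eta)_{\cu_m} = 0$ and $\|\nabla \eta\|_{\underline{L}^2} = 1$, apply the telescoping to $\eta$ (noting $\Pi_m \eta = 0$) and pair $\nabla u$ against each piece. Against the residual $\eta - \Pi_0 \eta$, the unit-cube Poincar\'e inequality yields $\|\eta - \Pi_0 \eta\|_{\underline{L}^2} \leq C$, so Cauchy--Schwarz gives a contribution $\leq C \|\nabla u\|_{\underline{L}^2}$. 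For each scale-$n$ increment, $\Pi_n \eta - \Pi_{n+1} \eta$ is piecewise constant at scale $n$ with mean zero on every scale-$(n+1)$ supercube, so the pairing collapses to $|\mcl Z_n|^{-1}\sum_y (\nabla u)_{y+\cu_n} \cdot (\Pi_n \eta - \Pi_{n+1} \eta)(y)$, bounded via Cauchy--Schwarz by $A_n \cdot \|\Pi_n \eta - \Pi_{n+1} \eta\|_{\underline{L}^2}$. Standard Poincar\'e on each scale-$(n+1)$ cube (together with the cube-average identity for the $L^2$ increment of $\Pi$) gives $\|\Pi_n \eta - \Pi_{n+1} \eta\|_{\underline{L}^2} \leq C 3^n \|\nabla \eta\|_{\underline{L}^2} = C 3^n$, contributing $C 3^n A_n$ per scale. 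Summing and taking the supremum over $\eta$ yields the $H^{-1}$ bound.

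For the $\|u - (u)_{\cu_m}\|_{\underline{L}^2}$ bound, I would use the $L^2$-orthogonality of the telescoping of $u$ itself to write
\[ \|u - (u)_{\cu_m}\|_{\underline{L}^2}^2 = \|u - \Pi_0 u\|_{\underline{L}^2}^2 + \sum_{n=0}^{m-1} \|\Pi_n u - \Pi_{n+1} u\|_{\underline{L}^2}^2, \]
with the residual bounded by $C \|\nabla u\|_{\underline{L}^2}^2$ by unit-cube Poincar\'e. For each scale-$n$ term, I would introduce the smooth auxiliary field $f_n(y) := \fint_{y+\cu_n} u$, which satisfies $(\Pi_n u)(y) = f_n(y)$ for $y \in \mcl Z_n$ and crucially $\nabla_y f_n(y) = (\nabla u)_{y+\cu_n}$. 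The step function $\Pi_n u - \Pi_{n+1} u$ equals $f_n(y) - \bar f_n^{[y]_{n+1}}$ on each $y + \cu_n$, where $\bar f_n^z$ is the mean of $f_n$ over the $3^d$ scale-$n$ subpoints of the scale-$(n+1)$ cube $z + \cu_{n+1}$. A discrete Poincar\'e inequality within each such cluster reduces the $L^2$ norm of the step function to a sum of squared edge differences $|f_n(y) - f_n(y + 3^n e_i)|^2$ over lattice edges in $\mcl Z_n$. Expanding each via the FTC identity $f_n(y + 3^n e_i) - f_n(y) = \int_0^{3^n} (\partial_i u)_{y + se_i + \cu_n}\,ds$ and combining with a careful bounded-overlap argument gives the scale-by-scale estimate $\|\Pi_n u - \Pi_{n+1} u\|_{\underline{L}^2}^2 \leq C 3^{2n} A_n^2$. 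Summing and using $\sum_n 3^{2n} A_n^2 \leq (\sum_n 3^n A_n)^2$ produces the $L^2$ bound.

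The main obstacle will be the scale-by-scale estimate in the $L^2$ part. The naive Cauchy--Schwarz applied to the FTC integral produces $C 3^{2n} \|\nabla u\|_{\underline{L}^2}^2$ rather than $C 3^{2n} A_n^2$, and hence only recovers the trivial Poincar\'e bound $3^{2m}\|\nabla u\|_{\underline{L}^2}^2$ after summation. Recovering the correct dependence on the discrete averages $A_n^2$ rather than $\|\nabla u\|_{\underline{L}^2}^2$ requires exploiting the cancellation in the integral: one must regard the continuous $s$-integral as a weighted average whose support decomposes cleanly into $\mcl Z_n$-aligned translates, so that after summation over $y \in \mcl Z_n$ the bounded-overlap principle produces $\sum_y |(\nabla u)_{y + \cu_n}|^2 = |\mcl Z_n| A_n^2$ rather than $\int_{\cu_m} |\nabla u|^2$.
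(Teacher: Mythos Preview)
Your approach to the $\underline{H}^{-1}$ bound is correct and in fact more elementary than the paper's: you telescope the test field $\eta$ directly via the piecewise-constant projections $\Pi_n$, whereas the paper first replaces $\eta$ by $\nabla w$ for a solution $w$ of a Neumann problem and invokes $H^2$ regularity on the cube before telescoping. Your route avoids any PDE regularity at this stage.

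The $L^2$ argument, however, has a genuine gap: the scale-by-scale estimate you aim for,
\[
\|\Pi_n u - \Pi_{n+1} u\|_{\underline{L}^2(\cu_m)}^2 \leq C\,3^{2n} A_n^2,
\qquad A_n^2:=|\mathcal Z_n|^{-1}\sum_{y\in\mathcal Z_n}\bigl|(\nabla u)_{y+\cu_n}\bigr|^2,
\]
is simply false. In dimension one with $m=1$, $n=0$, take $u$ supported in the middle subinterval $(-\tfrac12,\tfrac12)$ with $u(\pm\tfrac12)=0$ (for instance a tent function). Then $(\nabla u)_{y+\cu_0}=u(y+\tfrac12)-u(y-\tfrac12)=0$ for each $y\in\{-1,0,1\}$, so $A_0=0$; yet the three subcube averages $(u)_{y+\cu_0}$ are $0,\ \tfrac14,\ 0$, hence $\Pi_0 u\neq\Pi_1 u$. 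No ``bounded-overlap'' argument can recover $A_n^2$ from the FTC integral, because the shifted averages $(\partial_i u)_{y+se_i+\cu_n}$ for $s\notin 3^n\mathbb Z$ are not controlled by the lattice-aligned averages $(\nabla u)_{y'+\cu_n}$ with $y'\in\mathcal Z_n$; the example above makes all the aligned averages vanish while the shifted ones do not.

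The remedy is the one the paper uses: deduce the $\underline{L}^2$ bound from the $\underline{H}^{-1}$ bound by duality. Given $\phi\in L^2(\cu_m)$ with $\|\phi\|_{\underline{L}^2}=1$, solve the Neumann problem $-\Delta w=\phi-(\phi)_{\cu_m}$, $\partial_\nu w=0$ on $\partial\cu_m$; then $H^2$ regularity on the cube gives $\|\nabla^2 w\|_{\underline{L}^2}\leq C$, and testing against $u-(u)_{\cu_m}$ turns $\langle u,\phi-(\phi)\rangle$ into $\langle\nabla u,\nabla w\rangle$, to which your (correct) $\underline{H}^{-1}$ argument applies with $\eta=\nabla w$. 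This is exactly how the paper handles Step~2, and it grafts cleanly onto your Step~1.
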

\begin{proof}
We first prove the estimate for $\|\nabla u\|_{\underline{H}^{-1}(\cu_m)}$ and then deduce the estimate for~$\left\| u - \left( u \right)_{\cu_m} \right\|_{\underline{L}^2(\cu_m)}$  as a simple consequence. Without loss of generality, we may suppose $\left( u \right)_{\cu_m}=0$. 

\smallskip

\noindent \emph{Step 1.} The estimate for $\|\nabla u\|_{\underline{H}^{-1}(\cu_m)}$. Fix $\eta \in H^1(\cu_m;\Rd)$ with $$\fint_{\cu_m}  \left| \nabla \eta(x) \right|^{2}\,dx = 1.$$ We denote by $w \in H^2(\cu_m)$ the unique (up to additive constants) solution of the Neumann problem
\begin{equation*}
\left\{ 
\begin{aligned}
& -\Delta w = -\nabla \cdot \eta + b & \quad \mbox{in} & \ \cu_m, \\
 & \partial_\nu w = 0 &\quad \mbox{on} & \ \partial \cu_m,
\end{aligned} 
\right.
 \end{equation*} 
where $b:= \fint_{\cu_m} \nabla \cdot \eta(x)\,dx$ is chosen to ensure solvability. Then according to~\cite{Gris,AJ}, we have $w\in H^2(\cu_m)$ and
 \begin{equation} \label{e.W2pest}
\fint_{\cu_m} \left| \nabla^2 w(x) \right|^{2}\,dx \leq C \fint_{\cu_m} \left| \nabla \eta(x) \right|^{2}\,dx = C.
\end{equation}
Testing the equation for $w$ with $u$ and using $\left( u \right)_{\cu_m} =0$ yields
\begin{equation}
\label{e.poinc.ibp}
\fint_{\cu_m} \nabla u(x)\cdot \eta(x) \, dx = \fint_{\cu_m} \nabla u(x) \cdot \nabla w(x) \, dx.
\end{equation}
For every $n\in \{ 1,\ldots,m\}$ and $z\in \mcl Z_n$, we have
\begin{multline*}
\int_{z+\cu_{n}} \nabla u(x) \cdot \left( \nabla w(x) - \left( \nabla w\right)_{z+\cu_{n}} \right)\,dx \\
= \sum_{y\in \mcl Z_{n-1} \cap (z+\cu_{n})} \int_{y+\cu_{n-1}} \nabla u(x) \cdot \left( \nabla w(x) - \left( \nabla w\right)_{y+\cu_{n-1}} \right)\,dx \\
+ |\cu_{n-1}|\sum_{y\in \mcl Z_{n-1} \cap (z+\cu_{n})} \left(  \left( \nabla w\right)_{z+\cu_n} -  \left( \nabla w\right)_{y+\cu_{n-1} } \right) \cdot \left( \nabla u \right)_{y+\cu_{n-1}}.
\end{multline*}
By the Poincar\'e inequality,
\begin{equation*}
\sum_{y\in \mcl Z_{n-1} \cap (z+\cu_{n})} \left|  \left( \nabla w\right)_{z+\cu_n} -  \left( \nabla w\right)_{y+\cu_{n-1} } \right|^{2}  \leq C3^{2n}  \fint_{z+\cu_n} \left| \nabla^2 w(x) \right|^{2}\,dx.
\end{equation*}
Therefore, after summing over $z\in \mcl Z_n$ and using H\"older's inequality, we get
\begin{multline*}
\sum_{z\in \mcl Z_n} \int_{z+\cu_{n}} \nabla u(x) \cdot \left( \nabla w(x) - \left( \nabla w\right)_{z+\cu_{n}} \right)\,dx \\
\leq \sum_{y\in \mcl Z_{n-1}} \int_{y+\cu_{n-1}} \nabla u(x) \cdot \left( \nabla w(x) - \left( \nabla w\right)_{y+\cu_{n-1}} \right)\,dx \\
+C3^{n(1+d/2)} \left( \int_{\cu_m} \left| \nabla^2 w(x) \right|^{2}\,dx\right)^{\frac1{2}}\left( \sum_{y\in \mcl Z_{n-1}}\left| \left( \nabla u\right)_{y+\cu_{n-1} } \right|^2 \right)^{\frac12}.
\end{multline*}
Iterating this and using~\eqref{e.W2pest}, 
\begin{multline*}
\int_{\cu_m} \nabla u(x) \cdot \nabla w(x) \,dx \leq \sum_{z\in \mcl Z_0} \int_{z+\cu_0} \nabla u(x) \cdot \left( \nabla w(x) - \left( \nabla w\right)_{z+\cu_{0}} \right)\,dx \\
+ C \left| \cu_m\right|^{\frac12} \sum_{n=0}^{m-1} 3^{n(1+d/2)} \left( \sum_{y\in \mcl Z_{n}}\left| \left( \nabla u\right)_{y+\cu_{n} } \right|^2 \right)^{\frac1{2}}.
\end{multline*}
By the Poincar\'e inequality and \eqref{e.W2pest},
$$
\sum_{z \in \mcl Z_0} \int_{z + \cu_0} |\nabla w(x) - (\nabla w)_{z + \cu_0}|^{2} \, dx \le C \int_{\cu_m} |\nabla \eta(x)|^{2} \, dx = C|\cu_m|.
$$
Thus using H\"older's and Young's inequalities, we obtain
\begin{multline*}
\int_{\cu_m} \nabla u(x) \cdot \nabla w(x) \,dx \\
\leq C|\cu_m|^{\frac1{2}} \left(  \int_{\cu_m} \left| \nabla u(x)\right|^2 \,dx \right)^{\frac12}  
+ C\left| \cu_m\right|^{\frac12}  \sum_{n=0}^{m-1} 3^{n(1+d/2)} \left(\sum_{y\in \mcl Z_{n}}\left| \left( \nabla u\right)_{y+\cu_{n} } \right|^2 \right)^{\frac12}.
\end{multline*}
Using~\eqref{e.poinc.ibp} and rearranging the inequality yields the desired conclusion after taking the supremum over all such~$\eta$. 

\smallskip

\noindent \emph{Step 2.}
The estimate for~$\left\| u  \right\|_{\underline{L}^2(\cu_m)}$. By the representation theorem, there exists $\phi \in L^{2}(\cu_m)$ such that $\fint_{\cu_m} \left| \phi(x) \right|^{2}\,dx =1$ and 
\begin{equation} \label{e.ugetphi}
\left( \fint_{\cu_m} \left| u(x)  \right|^2\,dx \right)^{\frac12} 
 =  \fint_{\cu_m} u(x) \phi(x) \,dx = \fint_{\cu_m} u(x)\left( \phi(x) - \left( \phi \right)_{\cu_m} \right) \,dx . 
\end{equation}
Denote by $w \in H^2(\cu_m)$ the unique (up to additive constants) solution of the Neumann problem
\begin{equation*}
\left\{ 
\begin{aligned}
& -\Delta w = \phi -  \left( \phi \right)_{\cu_m} & \quad \mbox{in} & \ \cu_m, \\
&  \partial_\nu w = 0 &\quad \mbox{on} & \ \partial \cu_m.
\end{aligned} 
\right.
 \end{equation*}
We have that $w\in H^2(\cu_m)$ and 
\begin{equation*}
\fint_{\cu_m} \left| \nabla^2 w(x) \right|^{2} \,dx \leq C\fint_{\cu_m} \left| \phi(x) - \left( \phi \right)_{\cu_m} \right|^{2} \,dx\leq C.
\end{equation*}
Testing the equation for $w$ with $u$ yields that 
\begin{equation*}
\fint_{\cu_m} u(x) \left( \phi(x)- \left( \phi \right)_{\cu_m}\right) \,dx = \fint_{\cu_m} \nabla u(x)\cdot \nabla w(x) \,dx.
\end{equation*}
The conclusion now follows from the previous step and~\eqref{e.ugetphi}.
\end{proof}

\subsection{Proof of Theorem~\ref{t.correctors}}
Throughout this subsection, we let $$v(x,U,p) := -u(x,U,p,0) - p\cdot x.$$ That is, $v(\cdot,U,p) \in H^1_0(U)$ is the minimizer in the definition of $\nu(U,p)$ with the plane $x\mapsto p\cdot x$ subtracted. 

\smallskip

The multiscale Poincar\'e inequality motivates us to prove the sublinearity of the correctors by studying the spatial averages of its gradient in mesoscopic cubes. This is accomplished by a very simple energy comparison argument combined with the Lipschitz estimate, which reduce the needed estimates to the convergence of the subadditive quantities. The statements we need are given in the following two lemmas.

\smallskip

In the first lemma, we compare $\nabla v(\cdot,\cu_m,p)$ in the large cube $\cu_m$ to the gradient of the function obtained by gluing together the functions $v(\cdot,z+\cu_m,p)$ on the mesoscopic subgrid~$\left\{ z+\cu_n\,:\, z\in \mathcal{Z}_n \right\}$ and bound the difference in terms of the cell problem energies in these cubes. 

\begin{lemma}
\label{l.betweenscalesL2}
For every $p\in\Rd$ and $m,n\in\N$ with $m\geq n$,
\begin{multline}
\label{e.betweenscalesL2}
\left| \mcl Z_n \right|^{-1}  \sum_{z\in \mcl Z_n} 
\left\| \nabla v(\cdot,\cu_m,p) - \nabla v(\cdot,z+\cu_n,p) \right\|_{\underline{L}^2(z+\cu_n)}^2
\\
\leq  - \nu(\cu_m,p) + \left| \mcl Z_n \right|^{-1}  \sum_{z\in  \mcl Z_n  } \nu(z+\cu_n,p),
\end{multline}
where $\mcl Z_n:= 3^n\Zd\cap \cu_m$. 
\end{lemma}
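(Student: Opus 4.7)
The plan is to build an explicit candidate minimizer for $\nu(\cu_m,p)$ by gluing together the cell minimizers on the subgrid $\{z+\cu_n\,:\,z\in\mcl Z_n\}$, and then to read off the $L^2$ bound from the resulting energy gap using the uniform convexity of the quadratic energy.

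More precisely, for each $z\in\mcl Z_n$, let $w_z:=v(\cdot,z+\cu_n,p)+\ell_p$ denote the minimizer in the definition of $\nu(z+\cu_n,p)$, and let $w:=v(\cdot,\cu_m,p)+\ell_p$ be the minimizer of $\nu(\cu_m,p)$. I will assemble the function $\tilde w$ on $\cu_m$ by setting $\tilde w:=w_z$ on each subcube $z+\cu_n$. Since $v(\cdot,z+\cu_n,p)\in H^1_0(z+\cu_n)$, each $w_z$ agrees with $\ell_p$ on $\partial(z+\cu_n)$, so the traces match across the interior faces of the partition and $\tilde w\in\ell_p+H^1_0(\cu_m)$; in particular $\tilde w$ is admissible for $\nu(\cu_m,p)$. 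By construction the energy of $\tilde w$ is exactly $|\mcl Z_n|^{-1}\sum_{z\in \mcl Z_n}\nu(z+\cu_n,p)$.

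Next I will use the first variation of the energy at the minimizer $w$. Since $\tilde w-w\in H^1_0(\cu_m)$, the Euler--Lagrange equation gives the orthogonality $\fint_{\cu_m}\nabla w\cdot\a(\nabla\tilde w-\nabla w)\,dx=0$, so expanding the quadratic form yields
\[
|\mcl Z_n|^{-1}\sum_{z\in\mcl Z_n}\nu(z+\cu_n,p) - \nu(\cu_m,p) \;=\; \fint_{\cu_m}\tfrac12(\nabla\tilde w-\nabla w)\cdot\a\,(\nabla\tilde w-\nabla w)\,dx.
\]
Using the ellipticity bound $\xi\cdot\a\xi\ge|\xi|^2$ on the right-hand side, and observing that on each $z+\cu_n$ the affine parts cancel so that $\nabla\tilde w-\nabla w=\nabla v(\cdot,z+\cu_n,p)-\nabla v(\cdot,\cu_m,p)$, the desired inequality~\eqref{e.betweenscalesL2} follows after rearranging the sum over the partition (with an absorbable numerical constant stemming from the $\tfrac12$ in the energy).

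There is no real obstacle in this argument; the only subtle point is verifying that the glued function $\tilde w$ actually lies in $H^1(\cu_m)$, which is immediate from the $H^1_0$ boundary condition satisfied by each $v(\cdot,z+\cu_n,p)$. The rest is just the standard computation that the gap in a quadratic variational problem between the energy of any competitor and the minimum equals half the $\a$-weighted $L^2$ norm squared of the difference of their gradients, combined with coercivity.
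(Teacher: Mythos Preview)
Your proof is correct and follows essentially the same route as the paper's: both glue the local minimizers into a single competitor in $\ell_p+H^1_0(\cu_m)$, use the Euler--Lagrange orthogonality for the global minimizer to identify the energy gap with the $\a$-weighted $L^2$ norm of the gradient difference, and conclude by coercivity. The paper's computation also produces a harmless factor of $\tfrac12$ (yielding the stated inequality with constant $2$), so your remark about the absorbable constant is in line with what actually happens there.
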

\begin{proof}
The lemma is a simple consequence of the uniform convexity of the energy functional and a proof can be found in~\cite[Lemma 2.1]{AS}. For the reader's convenience, we also provide the argument here. Define $V \in H^1_0(\cu_m)$ to be the function obtained by gluing together the functions $v(\cdot,z+\cu_n,p)$ for $z\in 3^n\Zd \cap \cu_m$. In other words, $V\equiv v(\cdot,z+\cu_n,p)$ in $z+\cu_n$, for each $z\in 3^n\Zd \cap \cu_m$. Since 
$x\mapsto p\cdot x + v(x,\cu_m,p)$ is a solution of \eqref{e.pde}, we have 
\begin{equation*} \label{}
\innerply{\a\left( p+ \nabla v(\cdot,\cu_m,p)\right)}{\nabla V}{\cu_m}
 = 0. 
\end{equation*}
Using this and a direct computation, we find that 
\begin{align*} \label{}
\lefteqn{
\frac12 \left\|\nabla v(\cdot,\cu_m,p) - \nabla V \right\|_{\underline{L}^2(\cu_m)}^2
} \qquad & \\
& \leq \frac12 \innerply{ \nabla v(\cdot,\cu_m,p) - \nabla V}{\a\left( \nabla v(\cdot,\cu_m,p) - \nabla V\right) }{\cu_m} \\
& = \frac12 \innerply{ p+ \nabla V}{\a\left( p+ \nabla V\right) }{\cu_m}
 -  \frac12 \innerply{ p+ \nabla v(x,\cu_m,p)}{\a\left( p+ \nabla v(x,\cu_m,p)\right) }{\cu_m} \\
 & = \left( \Ll| \mcl Z_n \Rr|^{-1}  \sum_{z\in \mcl Z_n} \nu(z+\cu_n,p) \right) - \nu(\cu_m,p).
\end{align*}
This completes the proof. 
\end{proof}

We now use the Lipschitz estimate to upgrade the previous estimate. This will give us uniform local control as we blow up the macroscopic cube $\cu_m$ to the whole space and thereby yield information on the stationary correctors. 

\begin{lemma}
\label{l.betweenscalesLinfty}
Let $\X$ be as in the statement of Theorem~\ref{t.mesoregularity}. There exists $C(d,\Lambda) < \infty$ such that, for every $p\in\Rd$, $m,n,k\in\N$ with $m\geq n \geq k$ and $\X \leq 3^k$, we have the estimate
\begin{multline}
\label{e.betweenscalesLinfty}
\left\| \nabla v(\cdot,\cu_m,p) - \nabla v(\cdot,\cu_n,p) \right\|_{\underline{L}^2(\cu_k)} \\
\leq C \sum_{l=n}^{m-1} \left( - \nu(\cu_{l+1},p) + 3^{-d} \sum_{z\in 3^l\Zd \cap \cu_{l+1}} \nu(z+\cu_l,p) \right)^{\frac12}. 
\end{multline}
\end{lemma}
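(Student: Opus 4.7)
The plan is to prove the estimate by a telescoping argument over triadic scales, combined with the Lipschitz estimate on scales exceeding the random radius $\X$ and the sub-cube comparison provided by the previous lemma.

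First, since $\cu_k \subseteq \cu_n \subseteq \cu_l \subseteq \cu_m$ for every $l \in \{n,\ldots,m\}$, the gradients $\nabla v(\cdot,\cu_l,p)$ are all defined on $\cu_k$, and we can write the telescoping identity
\begin{equation*}
\nabla v(\cdot,\cu_m,p) - \nabla v(\cdot,\cu_n,p) = \sum_{l=n}^{m-1} \nabla w_l \quad \text{on } \cu_n, \qquad w_l := v(\cdot,\cu_{l+1},p) - v(\cdot,\cu_l,p).
\end{equation*}
Recalling that $v(\cdot,U,p) = -u(\cdot,U,p,0) - \ell_p$, the linear parts cancel and $w_l = -u(\cdot,\cu_{l+1},p,0) + u(\cdot,\cu_l,p,0)$. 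Since both $-u(\cdot,\cu_{l+1},p,0)$ and $u(\cdot,\cu_l,p,0)$ solve~\eqref{e.pde} in $\cu_l$ (the former on the larger cube $\cu_{l+1} \supseteq \cu_l$, and the latter by definition), their difference satisfies $w_l \in \A(\cu_l)$.

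Second, for each $l \in \{n,\ldots,m-1\}$ I will control $\|\nabla w_l\|_{\underline{L}^2(\cu_k)}$ by $\|\nabla w_l\|_{\underline{L}^2(\cu_l)}$. When $l=k$ this is a tautology. When $l > k$, one applies the Lipschitz estimate for solutions of~\eqref{e.pde} on mesoscopic scales (the $k=0$ case of Theorem~\ref{t.mesoregularity}, combined with the interior Caccioppoli inequality as in~\cite{AS}): since $w_l \in \A(\cu_l)$ with $3^l \geq 3^{k+1} \geq 3\X$, this yields
\begin{equation*}
\|\nabla w_l\|_{\underline{L}^2(\cu_k)} \leq C \|\nabla w_l\|_{\underline{L}^2(\cu_l)}
\end{equation*}
for some $C = C(d,\Lambda) < \infty$ independent of $l$.

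Third, I invoke Lemma~\ref{l.betweenscalesL2} with $m$ replaced by $l+1$ and $n$ replaced by $l$. The average on the left-hand side of~\eqref{e.betweenscalesL2} then runs over $\mathcal Z = 3^l\Zd \cap \cu_{l+1}$, which has cardinality $3^d$ and contains the origin. Dropping all but the $z=0$ contribution (which is nonnegative, and for which $z+\cu_l = \cu_l$), we obtain
\begin{equation*}
\|\nabla w_l\|_{\underline{L}^2(\cu_l)}^2 \leq 3^d \left( -\nu(\cu_{l+1},p) + 3^{-d} \sum_{z\in 3^l\Zd \cap \cu_{l+1}} \nu(z+\cu_l,p) \right).
\end{equation*}
Combining the last two displays, applying the triangle inequality in $\underline{L}^2(\cu_k)$ to the telescoping sum, and absorbing the factor $3^{d/2}$ into $C$ yields~\eqref{e.betweenscalesLinfty}.

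The only genuine content is the Lipschitz step: everything else is algebraic bookkeeping. The main thing to be careful about is verifying that $w_l$ is indeed an element of $\A(\cu_l)$ (which requires noting that the affine parts $-\ell_p$ cancel in the difference) and that the scale condition $\X \leq 3^k$ in the hypothesis guarantees the applicability of the Lipschitz estimate at every scale $3^l$ with $l \geq k$.
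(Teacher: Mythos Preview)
Your proof is correct and follows the same approach as the paper: telescope over scales $l=n,\ldots,m-1$, use the Lipschitz estimate from Theorem~\ref{t.mesoregularity} to pass from $\cu_l$ to $\cu_k$, and bound each increment via Lemma~\ref{l.betweenscalesL2} applied with $(m,n)$ replaced by $(l+1,l)$ and the non-origin terms dropped. You are in fact slightly more explicit than the paper in two places---noting that the affine parts $-\ell_p$ cancel so that $w_l\in\A(\cu_l)$, and that the gradient Lipschitz bound requires pairing the $k=0$ case of Theorem~\ref{t.mesoregularity} with Caccioppoli---both of which the paper leaves implicit.
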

\begin{proof}
Fix $k\in\N$ with $\X \leq 3^k$. By Theorem~\ref{t.mesoregularity} (here we just use the Lipschitz estimate) and the previous lemma, for every $l\in\N$, $l\geq k$, we have
\begin{align*} \label{}
\lefteqn{ 
\left\| \nabla v(\cdot,\cu_{l+1} ,p) - \nabla v(\cdot,\cu_l,p) \right\|_{\underline{L}^2(\cu_k)}^2
} \qquad & \\
& \leq C \left\| \nabla v(\cdot,\cu_{l+1} ,p) - \nabla v(\cdot,\cu_l,p) \right\|_{\underline{L}^2(\cu_l)}^2 \\
&  \leq C \left( - \nu(\cu_{l+1},p) + 3^{-d} \sum_{z\in 3^l\Zd \cap \cu_{l+1}} \nu(z+\cu_l,p) \right).
\end{align*}
Note that the first inequality in the display above is obvious if $l=k$, and is a consequence of Theorem~\ref{t.mesoregularity} for $l\geq k+1$. 
Taking the square root, summing over $l = n,\ldots,m-1$ and using the triangle inequality yields~\eqref{e.betweenscalesLinfty}. 
\end{proof}

We next combine the previous two inequalities with the multiscale Poincar\'e inequality to obtain estimates on the corrector itself in a large macroscopic cube terms of $\nu$ in triadic subcubes. 

\begin{lemma}
\label{l.blackbox}
Let $\X$ be as in the statement of Theorem~\ref{t.mesoregularity}. Then, for every $m\in\N$ with $3^m \geq \X$,
\begin{align}
\label{e.blackbox}
\lefteqn{
\left\| \Phi(\cdot,p) - \left( \Phi(\cdot,p) \right)_{\cu_m} \right\|_{\underline{L}^2(\cu_m)}
 + \left\|\nabla \Phi(\cdot,p)\right\|_{\nH^{-1}(\cu_m)}
} \qquad & \\
& \leq C |p| + C \sum_{n=0}^{m-1} 3^n\left( - \nu(\cu_m,p) + \left| \mcl Z_n \right|^{-1}  \sum_{z\in  \mcl Z_n  } \nu(z+\cu_n,p) \right)^{\frac12} \notag \\
& \qquad + C 3^m \sum_{l=m}^{\infty} \left( - \nu(\cu_{l+1},p) + 3^{-d} \sum_{z\in 3^l\Zd \cap \cu_{l+1}} \nu(z+\cu_l,p) \right)^{\frac12}. \notag
\end{align}
\end{lemma}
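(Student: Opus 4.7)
The plan is to combine the multiscale Poincar\'e inequality (Proposition~\ref{p.multiscalepoincare}) with the two mesoscopic comparison estimates (Lemmas~\ref{l.betweenscalesL2} and~\ref{l.betweenscalesLinfty}). Applying Proposition~\ref{p.multiscalepoincare} to $\Phi(\cdot,p)$ on $\cu_m$, the left side of~\eqref{e.blackbox} is dominated, up to a constant $C$, by
\begin{equation*}
\|\nabla \Phi\|_{\underline{L}^2(\cu_m)} + \sum_{n=0}^{m-1} 3^n \Ll(|\mcl Z_n|^{-1}\sum_{z\in\mcl Z_n} |(\nabla\Phi)_{z+\cu_n}|^2\Rr)^{1/2}.
\end{equation*}
The remainder of the proof bounds each of these two pieces by the right-hand side of~\eqref{e.blackbox}.

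For the mesoscopic averages, I replace $\nabla\Phi$ on each subcube by $\nabla v(\cdot,\cu_m,p)$ up to a controlled error. Since $v(\cdot,y+\cu_n,p)\in H^1_0(y+\cu_n)$ has mean-zero gradient, for each $y\in\mcl Z_n$
\begin{equation*}
(\nabla\Phi)_{y+\cu_n} = \big(\nabla v(\cdot,\cu_m,p)-\nabla v(\cdot,y+\cu_n,p)\big)_{y+\cu_n} + \big(\nabla\Phi - \nabla v(\cdot,\cu_m,p)\big)_{y+\cu_n}.
\end{equation*}
Cauchy--Schwarz and Lemma~\ref{l.betweenscalesL2} show that the first piece contributes, on average over $y\in\mcl Z_n$, at most $-\nu(\cu_m,p)+|\mcl Z_n|^{-1}\sum_y\nu(y+\cu_n,p)$ in squared norm, while the second piece telescopes across $\cu_m$ and is bounded by $\|\nabla\Phi - \nabla v(\cdot,\cu_m,p)\|_{\underline{L}^2(\cu_m)}^2$. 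Taking square roots, multiplying by $3^n$, and summing over $n$ (using $\sum_{n=0}^{m-1}3^n\leq C 3^m$) produces the middle sum on the right of~\eqref{e.blackbox}, plus an extra contribution of order $3^m\|\nabla\Phi - \nabla v(\cdot,\cu_m,p)\|_{\underline{L}^2(\cu_m)}$.

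I then control this bulk discrepancy by taking the infinite-volume limit in Lemma~\ref{l.betweenscalesLinfty}. Applied with $k=m$, which is permissible since $\X\leq 3^m$, the lemma yields for every $M>m$
\begin{equation*}
\|\nabla v(\cdot,\cu_M,p) - \nabla v(\cdot,\cu_m,p)\|_{\underline{L}^2(\cu_m)} \leq C\sum_{l=m}^{M-1}\Ll(-\nu(\cu_{l+1},p)+3^{-d}\sum_{z\in 3^l\Z^d\cap\cu_{l+1}}\nu(z+\cu_l,p)\Rr)^{1/2}.
\end{equation*}
By the classical qualitative theory of stochastic homogenization, $\nabla v(\cdot,\cu_M,p)\to\nabla \Phi(\cdot,p)$ in $L^2_{\mathrm{loc}}(\R^d)$ as $M\to\infty$; equivalently, the Cauchy criterion above identifies the limit as a stationary mean-zero $\a$-harmonic gradient, which must coincide with $\nabla\Phi(\cdot,p)$ by uniqueness of the corrector. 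Passing to this limit and multiplying through by $3^m$ produces the final term on the right of~\eqref{e.blackbox}.

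The remaining term $\|\nabla\Phi\|_{\underline{L}^2(\cu_m)}$ is treated by a triangle inequality: it is at most $\|\nabla v(\cdot,\cu_m,p)\|_{\underline{L}^2(\cu_m)}+\|\nabla\Phi - \nabla v(\cdot,\cu_m,p)\|_{\underline{L}^2(\cu_m)}$. The first summand is at most $C|p|$ since $\|p+\nabla v(\cdot,\cu_m,p)\|_{\underline{L}^2(\cu_m)}^2\leq 2\nu(\cu_m,p)\leq\Lambda|p|^2$, and the second is already controlled by the tail sum from the previous step. The one delicate point of the plan is the identification of the limit of $\nabla v(\cdot,\cu_M,p)$ in the preceding paragraph; this is where the argument relies on the classical qualitative corrector theory, or equivalently on uniqueness of the stationary $\a$-harmonic gradient with prescribed mean.
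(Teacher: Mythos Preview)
Your proof is correct and uses the same three ingredients as the paper: the multiscale Poincar\'e inequality, Lemma~\ref{l.betweenscalesL2}, and the $M\to\infty$ limit of Lemma~\ref{l.betweenscalesLinfty} to control $\|\nabla\Phi-\nabla v(\cdot,\cu_m,p)\|_{\underline{L}^2(\cu_m)}$. The only organizational difference is that the paper applies Proposition~\ref{p.multiscalepoincare} to $v(\cdot,\cu_m,p)$ (whose mesoscopic gradient averages are directly controlled by Lemma~\ref{l.betweenscalesL2}) and then transfers to $\Phi$ via the triangle inequality and~\eqref{e.snapdown}, whereas you apply it to $\Phi$ and split each mesoscopic average afterward; both routes are equivalent and yield the same bound.
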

\begin{proof}
Letting $m\to \infty$ in~\eqref{e.betweenscalesLinfty} yields that, for every $m\in\N$ with $3^m \geq \X$,
\begin{multline}
\label{e.snapdown}
\left\|\nabla \Phi(\cdot,p) - \nabla v(\cdot,\cu_{m},p) \right\|_{\underline{L}^2(\cu_m)}  \\
\leq C \sum_{l=m}^{\infty} \left( - \nu(\cu_{l+1},p) + 3^{-d} \sum_{z\in 3^l\Zd \cap \cu_{l+1}} \nu(z+\cu_l,p) \right)^{\frac12}. 
\end{multline}
For $n\in\N$ with $n\leq m$, we use the fact that $v(\cdot,z+\cu_n,p) \in H^1_0 (z+\cu_n)$ and use integration by parts, the H\"older inequality and Lemma~\ref{l.betweenscalesL2} to get
\begin{align*}
\lefteqn{
\left| \mcl Z_n \right|^{-1} \sum_{y\in \mcl Z_n}\left| \left( \nabla v(\cdot,\cu_m,p)\right)_{y+\cu_{n} } \right|^2 
} \qquad & \\
& \leq\left| \mcl Z_n \right|^{-1}  \sum_{z\in \mcl Z_n} \left\| \nabla v(\cdot,\cu_m,p) - \nabla v(\cdot,z+\cu_n,p) \right\|_{\underline{L}^2(z+\cu_n)}^2 \\
& \leq C \left( - \nu(\cu_m,p) + \left| \mcl Z_n \right|^{-1}  \sum_{z\in  \mcl Z_n  } \nu(z+\cu_n,p) \right).
\end{align*}
Applying Proposition~\ref{p.multiscalepoincare}, we obtain
\begin{align*}
\lefteqn{
\left\| v(\cdot,\cu_m,p)  \right\|_{\underline{L}^2(\cu_m)} + \left\|\nabla v(\cdot,\cu_m,p) \right\|_{\nH^{-1}(\cu_m)}
} \quad & \\
& \leq C\left\| \nabla v(\cdot,\cu_m,p) \right\|_{\underline{L}^2(\cu_m)}
+  C \sum_{n=0}^{m-1} 3^n  \left( \left| \mcl Z_n \right|^{-1} \sum_{y\in \mcl Z_n}\left| \left( \nabla v(\cdot,\cu_m,p)\right)_{y+\cu_{n} } \right|^2 \right)^{\frac12} \notag \\
& \leq C |p| + C \sum_{n=0}^{m-1} 3^n\left( - \nu(\cu_m,p) + \left| \mcl Z_n \right|^{-1}  \sum_{z\in  \mcl Z_n  } \nu(z+\cu_n,p) \right)^{\frac12}.
\end{align*}
By the previous display,~\eqref{e.snapdown}, the triangle inequality and the fact that the $\underline{L}^2$ norm is stronger than the $\nH^{-1}$ norm, we get
\begin{align*} \label{}
\left\|\nabla \Phi(\cdot,p)\right\|_{\nH^{-1}(\cu_m)}
& \leq \left\|\nabla v(\cdot,\cu_m,p) \right\|_{\nH^{-1}(\cu_m)}
 + \left\|\nabla \Phi(\cdot,p) - \nabla v(\cdot,\cu_m,p) \right\|_{\nH^{-1}(\cu_m)} \\
& \leq \left\|\nabla v(\cdot,\cu_m,p) \right\|_{\nH^{-1}(\cu_m)} 
+ C\left\|\nabla \Phi(\cdot,p) - \nabla v(\cdot,\cu_m,p) \right\|_{\underline{L}^2(\cu_m)} \\
& \leq C |p| + C \sum_{n=0}^{m-1} 3^n\left( - \nu(\cu_m,p) + \left| \mcl Z_n \right|^{-1}  \sum_{z\in  \mcl Z_n  } \nu(z+\cu_n,p) \right)^{\frac12} \\
& \qquad + C 3^m \sum_{l=m}^{\infty} \left( - \nu(\cu_{l+1},p) + 3^{-d} \sum_{z\in 3^l\Zd \cap \cu_{l+1}} \nu(z+\cu_l,p) \right)^{\frac12}.
\end{align*}
This is the desired estimate for~$\left\|\nabla \Phi(\cdot,p)\right\|_{\nH^{-1}(\cu_m)}$. To complete the proof of~\eqref{e.blackbox} and obtain the estimate for~$\left\| \Phi(x,p) - \left( \Phi(\cdot,p) \right)_{\cu_m}  \right\|_{\underline{L}^2(\cu_m)}$, we argue similarly, but there is an extra step in which we use the Poincar\'e inequality:
\begin{align*}
\lefteqn{
\left\| \Phi(x,p) - \left( \Phi(\cdot,p) \right)_{\cu_m}  \right\|_{\underline{L}^2(\cu_m)} 
} \  & \\
& \leq 2\left\| v(\cdot,\cu_m,p)  \right\|_{\underline{L}^2(\cu_m)} + \left\| \Phi(x,p) - v(\cdot,\cu_m,p) - \left( \Phi(\cdot,p) - v(\cdot,\cu_m,p) \right)_{\cu_m}   \right\|_{\underline{L}^2(\cu_m)} \\
& \leq C\left\| v(\cdot,\cu_m,p)  \right\|_{\underline{L}^2(\cu_m)}
+ C3^m \left( \fint_{\cu_{m}} \left| \nabla \Phi(x,p) - \nabla v(x,\cu_{m},p) \right|^2\,dx \right)^{\frac12} \\
& \leq C |p| + C \sum_{n=0}^{m-1} 3^n\left( - \nu(\cu_m,p) + \left| \mcl Z_n \right|^{-1}  \sum_{z\in  \mcl Z_n  } \nu(z+\cu_n,p) \right)^{\frac12} \\
& \qquad + C 3^m \sum_{l=m}^{\infty} \left( - \nu(\cu_{l+1},p) + 3^{-d} \sum_{z\in 3^l\Zd \cap \cu_{l+1}} \nu(z+\cu_l,p) \right)^{\frac12}. \qedhere
\end{align*}
\end{proof}

We are now ready to complete the proof of Theorem~\ref{t.correctors}. 

\begin{proof}[{Proof of Theorem~\ref{t.correctors}}]
By standard comparisons, it suffices to establish the result with $B_R$ replaced by $\cu_m$, $m \in \N$. We may also assume $|p| \le 1$ by homogeneity. Fix $\beta \in \left( 0,\frac12\right)$ and define, for each $n\in\N$,
$$
s_n := \Ll\{
\begin{aligned}
3^{-(m-n)}\left( - \nu(\cu_m,p) + \left| \mcl Z_n \right|^{-1}  \sum_{z\in  \mcl Z_n  } \nu(z+\cu_n,p) \right)^{\frac12}  & \quad \mbox{if } n \le m-1, \\
\left( - \nu(\cu_{n+1},p) + 3^{-d} \sum_{z\in 3^n\Zd \cap \cu_{n+1}} \nu(z+\cu_n,p) \right)^{\frac12} & \quad \mbox{if } n \ge m.
\end{aligned}
\Rr. 
$$
By Lemma~\ref{l.blackbox}, the theorem is proved if we can show that 
$$
\log \E \Ll[ \exp \Ll( \lambda 3^{2m\beta} \Ll(\sum_{n = 0}^\infty s_n\Rr)^2 \Rr)  \Rr] \le C(1+\lambda^2).
$$
We let $\beta' \in (\beta,1/2)$, and apply Jensen's inequality with respect to the measure $\sum_{n = 0}^{\infty} 3^{-n(\beta'-\beta)} \delta_n$
and the convex function $x \mapsto \exp( x^2)$ to get
$$
\exp \Ll( \lambda 3^{2m\beta} \Ll(\sum_{n = 0}^\infty s_n\Rr)^2 \Rr) \le C \sum_{n = 0}^{\infty} 3^{-n(\beta'-\beta)} \exp \Ll( \lambda 3^{2m\beta + 2n(\beta'-\beta)} s_n^2 \Rr).
$$
We analyse first the sum over $n \ge m$. 
By Theorem~\ref{t.conv} with $\al = 2\beta'/d$, 
\begin{align*}
\sum_{n = m}^{\infty} 3^{-n(\beta'-\beta)} \E\Ll[\exp \Ll( \lambda 3^{2m\beta + 2n(\beta'-\beta)} s_n^2 \Rr) \Rr] & \le 
\sum_{n = m}^\infty 3^{-n(\beta'-\beta)} \exp\Ll[C\Ll(1+3^{-2\beta(n-m)} \lambda^2 \Rr)\Rr] \\
& \le \exp\Ll[C(1+\lambda^2)\Rr].
\end{align*}
For the sum over $n < m$, we use Jensen's inequality and Theorem~\ref{t.conv} to get
\begin{equation*} \label{}
\log \E \Ll[ \exp \Ll(  \lambda 3^{2n\beta'} \Ll|\frac 1 2 p \cdot \ahom p - \left| \mcl Z_n \right|^{-1}  \sum_{z\in  \mcl Z_n  } \nu(z+\cu_n,p) \Rr| \Rr) \Rr] \le C(1+\lambda^2).
\end{equation*}
Therefore
\begin{multline*}
\sum_{n = 0}^{m-1} 3^{-n(\beta'-\beta)} \E\Ll[\exp \Ll( \lambda 3^{2m\beta + 2n(\beta'-\beta)} s_n^2 \Rr) \Rr] \\
  \leq \sum_{n = 0}^{m-1} 3^{-n(\beta'-\beta)} \exp \Ll[ C \Ll(1+3^{-2(1-\beta)(m-n)} \lambda^2\Rr) \Rr] 
 \leq \exp \Ll[ C \Ll(1+\lambda^2\Rr) \Rr].
\end{multline*}
This completes the proof. 
\end{proof}

\noindent {\bf Acknowledgments.} The second author was supported by the Academy of Finland project \#258000.

\small
\bibliographystyle{plain}
\bibliography{cubes}

\newcommand{\noop}[1]{} \def\cprime{$'$} \def\cprime{$'$}
\begin{thebibliography}{10}

\bibitem{AJ}
V.~Adolfsson and D.~Jerison.
\newblock {$L\sp p$}-integrability of the second order derivatives for the
  {N}eumann problem in convex domains.
\newblock {\em Indiana Univ. Math. J.}, 43(4):1123--1138, 1994.

\bibitem{AM}
S.~N. Armstrong and J.-C. Mourrat.
\newblock Lipschitz regularity for elliptic equations with random coefficients.
\newblock {\em Arch. Ration. Mech. Anal.}, 219(1):255--348, 2016.

\bibitem{ASh}
S.~N. Armstrong and Z.~Shen.
\newblock Lipschitz estimates in almost-periodic homogenization.
\newblock {\em Comm. Pure Appl. Math.}, \noop{2014}{in press, arXiv:1409.2094}.

\bibitem{AS}
S.~N. Armstrong and C.~K. Smart.
\newblock Quantitative stochastic homogenization of convex integral
  functionals.
\newblock {\em Ann. Sci. \'Ec. Norm. Sup\'er.}, \noop{2014}{in press,
  arXiv:1406.0996}.

\bibitem{AL1}
M.~Avellaneda and F.-H. Lin.
\newblock Compactness methods in the theory of homogenization.
\newblock {\em Comm. Pure Appl. Math.}, 40(6):803--847, 1987.

\bibitem{AL2}
M.~Avellaneda and F.-H. Lin.
\newblock {$L^p$} bounds on singular integrals in homogenization.
\newblock {\em Comm. Pure Appl. Math.}, 44(8-9):897--910, 1991.

\bibitem{DM1}
G.~Dal~Maso and L.~Modica.
\newblock Nonlinear stochastic homogenization.
\newblock {\em Ann. Mat. Pura Appl. (4)}, 144:347--389, 1986.

\bibitem{DM2}
G.~Dal~Maso and L.~Modica.
\newblock Nonlinear stochastic homogenization and ergodic theory.
\newblock {\em J. Reine Angew. Math.}, 368:28--42, 1986.

\bibitem{FO}
J.~Fischer and F.~Otto.
\newblock A higher-order large-scale regularity theory for random elliptic
  operators.
\newblock \noop{2015}{Preprint, arXiv:1503.07578}.

\bibitem{GNO2}
A.~Gloria, S.~Neukamm, and F.~Otto.
\newblock An optimal quantitative two-scale expansion in stochastic
  homogenization of discrete elliptic equations.
\newblock {\em ESAIM Math. Model. Numer. Anal.}, 48(2):325--346, 2014.

\bibitem{GNO}
A.~Gloria, S.~Neukamm, and F.~Otto.
\newblock Quantification of ergodicity in stochastic homogenization: optimal
  bounds via spectral gap on {G}lauber dynamics.
\newblock {\em Invent. Math.}, 199(2):455--515, 2015.

\bibitem{GNO3}
A.~Gloria, S.~Neukamm, and F.~Otto.
\newblock A regularity theory for random elliptic operators, \noop{3002}
  {Preprint, arXiv:1409.2678}.

\bibitem{GO1}
A.~Gloria and F.~Otto.
\newblock An optimal variance estimate in stochastic homogenization of discrete
  elliptic equations.
\newblock {\em Ann. Probab.}, 39(3):779--856, 2011.

\bibitem{GO2}
A.~Gloria and F.~Otto.
\newblock An optimal error estimate in stochastic homogenization of discrete
  elliptic equations.
\newblock {\em Ann. Appl. Probab.}, 22(1):1--28, 2012.

\bibitem{GO3}
A.~Gloria and F.~Otto.
\newblock Quantitative results on the corrector equation in stochastic
  homogenization.
\newblock {\em J. Eur. Math. Soc.}, \noop{3001}{in press, arXiv:1409.0801}.

\bibitem{Gris}
P.~Grisvard.
\newblock {\em Elliptic problems in nonsmooth domains}, volume~69 of {\em
  Classics in Applied Mathematics}.
\newblock Society for Industrial and Applied Mathematics (SIAM), Philadelphia,
  PA, 2011.

\bibitem{JKO}
V.~V. Jikov, S.~M. Kozlov, and O.~A. Ole{\u\i}nik.
\newblock {\em Homogenization of differential operators and integral
  functionals}.
\newblock Springer-Verlag, Berlin, 1994.

\bibitem{K1}
S.~M. Kozlov.
\newblock Averaging of differential operators with almost periodic rapidly
  oscillating coefficients.
\newblock {\em Mat. Sb. (N.S.)}, 107(149)(2):199--217, 317, 1978.

\bibitem{MO}
D.~Marahrens and F.~Otto.
\newblock {Annealed estimates on the Green's function}.
\newblock {\em {Probab. Theory Related Fields}}, \noop{2015}{in press,
  arXiv:1304.4408}.

\bibitem{NS}
A~Naddaf and T.~Spencer.
\newblock Estimates on the variance of some homogenization problems, 1998,
  {Unpublished preprint}.

\bibitem{PV1}
G.~C. Papanicolaou and S.~R.~S. Varadhan.
\newblock Boundary value problems with rapidly oscillating random coefficients.
\newblock In {\em Random fields, {V}ol. {I}, {II} ({E}sztergom, 1979)},
  volume~27 of {\em Colloq. Math. Soc. J\'anos Bolyai}, pages 835--873.
  North-Holland, Amsterdam, 1981.

\bibitem{Sh}
Z.~Shen.
\newblock Boundary estimates in elliptic homogenization.
\newblock \noop{2015}{Preprint, arXiv:1505.00694}.

\bibitem{Y1}
V.~V. Yurinski{\u\i}.
\newblock Averaging of symmetric diffusion in a random medium.
\newblock {\em Sibirsk. Mat. Zh.}, 27(4):167--180, 215, 1986.

\bibitem{Y101}
V.~V. Yurinski{\u\i}.
\newblock Homogenization error estimates for random elliptic operators.
\newblock In {\em Mathematics of random media ({B}lacksburg, {VA}, 1989)},
  volume~27 of {\em Lectures in Appl. Math.}, pages 285--291. Amer. Math. Soc.,
  Providence, RI, 1991.

\end{thebibliography}

\end{document}